\documentclass[10pt,a4paper,reqno]{amsart}
\usepackage{amsmath,amssymb,amsthm,graphicx,color}
\usepackage[left=1in,right=1in,top=1in,bottom=1in]{geometry}
\usepackage{cite}
\usepackage[british]{babel}
\usepackage{enumerate}
\usepackage{etoolbox}
\makeatletter
\patchcmd{\@footnotetext}{\normalfont\footnotesize}{\normalfont\footnotesize}{}{}
\makeatother
\usepackage{hyperref} 
\usepackage{bm}
\usepackage{comment}
\usepackage{mathtools}
\usepackage{tikz}
\usetikzlibrary{calc,arrows}
\usepackage{fancyhdr}
\usepackage{booktabs}
\usepackage{tabularx}
\usepackage{multirow}
\usepackage{capt-of}
\newcolumntype{V}{>{\centering\arraybackslash\hsize=.37\hsize}X}
\newcolumntype{Z}{>{\centering\arraybackslash\hsize=1.72\hsize}X}
\newcolumntype{Y}{>{\centering\arraybackslash\hsize=0.91\hsize}X}

\usepackage{xcolor}
\definecolor{darkblue}{rgb}{0.11,0.11,0.52} 
\definecolor{darkred}{rgb}{0.94,0.28,0.28} 

\hypersetup{
    colorlinks=true,
    linkcolor=red, 
    citecolor=blue,
    urlcolor=green, 
    pdfborder={0 0 1},       
    linkbordercolor=red,     
    citebordercolor=green,   
    urlbordercolor=cyan      
}

\newtheorem{theorem}{Theorem}[section]

\newtheorem{proposition}[theorem]{Proposition}
\newtheorem{lemma}[theorem]{Lemma}

\numberwithin{equation}{section}

\theoremstyle{definition}

\theoremstyle{remark}
\newtheorem{remark}[theorem]{Remark}

\newtheorem*{remark*}{Remark}

\theoremstyle{definition}

 \allowdisplaybreaks

\newcommand{\1}[1]{{\mathbf 1}{_{\set*{#1}}}}

\newcommand{\N}{\mathbb{N}}
\newcommand{\Z}{\mathbb{Z}}

\newcommand{\R}{\mathbb{R}}

\newcommand{\B}{\mathbb{B}}

\newcommand{\E}{\mathbb{E}}
\renewcommand{\P}{\mathbb{P}}

\renewcommand{\AA}{\mathcal{A}}
\newcommand{\BB}{\mathcal{B}}

\newcommand{\DD}{\mathcal{D}}
\newcommand{\EE}{\mathcal{E}}
\newcommand{\FF}{\mathcal{F}}
\newcommand{\GG}{\mathcal{G}}

\newcommand{\LL}{\mathcal{L}}

\newcommand{\PP}{\mathcal{P}}

\newcommand{\UU}{\mathcal{U}}
\newcommand{\VV}{\mathcal{V}}
\newcommand{\WW}{\mathcal{W}}
\newcommand{\XX}{\mathcal{X}}

\newcommand{\eps}{\varepsilon}
\newcommand{\lstar}{\ell_*}
\newcommand{\lstarstar}{\ell_{**}}
\newcommand{\pstar}{p_\star}
\newcommand{\deltastar}{\delta_\star}
\DeclareMathOperator{\TV}{TV}
\DeclareMathOperator{\imatrix}{I}
\DeclareMathOperator{\id}{Id}
\DeclareMathOperator{\tr}{Tr}
\DeclareMathOperator{\Lip}{Lip}
\DeclareMathOperator{\diver}{div}
\newcommand{\dd}{\mathrm{d}}

\DeclarePairedDelimiter\abs\lvert\rvert
\DeclarePairedDelimiter\norm\lVert\rVert

\DeclarePairedDelimiter\inner\langle\rangle

\DeclarePairedDelimiter\paren{\lparen}{\rparen} 
\DeclarePairedDelimiter\bracket{\lbrack}{\rbrack}
\DeclarePairedDelimiter\set\lbrace\rbrace

\newcommand{\xeq}[2][]{\mathbin{\underset{#1}{\overset{#2}{=}}}}

\newcommand{\myand}{\quad \text{and} \quad}
\newcommand{\as}{\quad \text{as} \quad}
\newcommand{\myif}{\quad \text{if} \quad}
\providecommand{\for}{}
\renewcommand{\for}{\quad \text{for} \quad}
\newcommand{\myforall}{\quad \text{for all} \quad}
\newcommand{\mywhere}{\quad \text{where} \quad}
\newcommand{\myon}{\quad \text{on} \quad}

\begin{document}
\title[TLDs with polynomial-tailed targets: explosions, moment thresholds and ergodicity]{Tempered Langevin diffusions with polynomial-tailed targets: explosions, moment thresholds and ergodicity} 

\subjclass[2020]{Primary 60J25, 60J60; Secondary 37A25, 60H10, 65C05}


\keywords{Tempered Langevin diffusions, heavy-tailed targets, explosion, moment thresholds, Bessel processes, subgeometric ergodicity, Wasserstein distance, Lyapunov functions}

\author{Aleksandar Mijatovi\'c}
\address{Department of Statistics, University of Warwick, UK}
\email{a.mijatovic@warwick.ac.uk}

\author{Haokai Xie}
\address{Department of Statistics, University of Warwick, UK}
\email{haokai.xie@warwick.ac.uk}


\begin{abstract}
    For tempered Langevin diffusions (TLDs) targeting polynomially tailed measures, the convergence rate is known to change from polynomial to exponential to uniform as the temperature rises through a critical value. 
    We explain this transition by the tails of the marginal laws of the norm of the TLD, up to a second, larger critical temperature beyond which the TLD explodes.
    As the temperature increases, these tails change from stretched-exponential, through super-polynomial at the first critical temperature, to polynomial, mirroring the transition in the convergence rate.
    Moreover, this tail analysis gives a complete quantification of the convergence rates throughout the non-explosive regime, in both $f$-variation and Wasserstein distances.

\end{abstract}

\maketitle

\section{Introduction} \label{sec:intro}

Langevin diffusions underpin many algorithms for sampling from probability distributions on $\R^d$ whose densities are known only up to a normalising constant~\cite{GrenanderMiller1994,roberts1996exponential,stramer1999langevin1,WellingTeh2011,GirolamiCalderhead2011,DurmusMoulines2017}.
For targets with polynomial tails, however, the classical Langevin diffusion fails to be exponentially ergodic~\cite{roberts1996exponential} and converges only at a polynomial rate~\cite{Veretennikov1997,FR2005,BM2026}.
Tempered Langevin diffusions (TLDs) were introduced to accelerate exploration of the tails by increasing the diffusion coefficient there, with a corresponding drift correction designed to preserve the target distribution~\cite{stramer1999langevin1,roberts2002langevin}.
For polynomially tailed targets, drift conditions show that increasing the temperature can improve convergence from polynomial to exponential and, at higher temperatures, to exponential convergence uniform over the initial state~\cite{FR2005,BM2026}.
These results quantify the improvement in convergence but leave open how it is reflected in the finite-time distributions of the TLD.
We explain this transition by characterising the tails of the TLD norm throughout the non-explosive regime, and justify a second, higher critical temperature beyond which the diffusion explodes in finite time almost surely.

Let $\Pi$ be a probability measure on $\R^d$, $d \in \Z_{>0}$, with Lebesgue density proportional to $\pi: \R^d \to (0, \infty)$. 
We assume throughout the paper that $\pi \in C^2(\R^d)$ has locally uniformly $\gamma$-H\"older continuous second-order partial derivatives for some $\gamma \in (0,1]$ and satisfies
\begin{equation} \label{AP}
    \tag{AP}
    \abs{x}^{d+r} \pi(x) \to c_\pi>0
    \myand
    \inner{x, \nabla \log \pi(x)} \to - (d+r) \as \abs{x} \to \infty 
\end{equation}
for some $r>0$. 
For $f: \R^d \to \R$, $f(x) \to c \in \R$ as $\abs{x} \to \infty$, means that $\lim_{\rho \to \infty} \sup_{\abs{x} = \rho} \abs{f(x) - c} = 0$. 
The probability measures with density proportional to $\pi$ satisfying \eqref{AP} have polynomial tails, with all moments of order \(p<r\) finite and all moments above $r$ infinite.

Given a temperature parameter $\ell \in [0,\infty)$, define the drift and diffusion coefficients
$$
b(x) = (1 - 2\ell ) \pi(x)^{-2 \ell } \nabla \log \pi(x) 
\myand 
\sigma(x) = \sqrt{2} \, \pi(x)^{-\ell }\imatrix_d, 
\quad x \in \R^d,
$$ 
where $\imatrix_d$ is the identity matrix in $\R^{d \times d}$, and consider the stochastic differential equation (SDE)
\begin{equation} \label{eq:tempered Langevin diffusion}
\dd X_t 
= b(X_t) \, \dd t + \sigma(X_t) \, \dd B_t
\myand
X_0=x\in\R^d, \tag{TLD}
\end{equation}
where $(B_t)_{t\ge0}$ is a standard Brownian motion in $\R^d$. 
Since $b$ and $\sigma$ are locally Lipschitz continuous, by \cite[Chap.~IV, Thms~2.3 \& 3.1]{IkedaWatanabe1989}, the SDE in~\eqref{eq:tempered Langevin diffusion} admits a unique strong solution $X$ up to the explosion time~$\zeta\in(0,\infty]$. 
We say $X$ is \textit{non-explosive} if $\zeta = \infty$ $\P_x$-a.s.\ for all $x \in \R^d$.
The solution $X$ is known as the \textit{tempered Langevin diffusion} process, which reduces to the classical Langevin diffusion when $\ell = 0$. 

By construction, the extended generator $\AA$ of $X$ (defined in Section~\ref{sec:proof explosion}) takes the divergence form $\AA f = \pi^{-1} \diver (\pi^{1-2 \ell} \nabla f)$ for $f \in C^2(\R^d)$. 
Hence, when $X$ is non-explosive, $\Pi$ is its unique invariant probability measure~\cite[Thms~4.1 \&~6.2]{Kent1978}.
It is known that the convergence of $X$ to $\Pi$ is polynomial for small $\ell$, including the classical Langevin diffusion ($\ell = 0$), and becomes exponential (resp.\ uniform) once $\ell$ reaches (resp.\ exceeds) the critical threshold $\frac{1}{d+r}$~\cite{stramer1999langevin1,FR2005,BM2026}.
\text{In this paper}, we first show that $X$ explodes in finite time once $\ell$ exceeds a second critical threshold~$\frac{1+r/2}{d+r}$, and then study the tails of $\abs{X_t}$ in the non-explosive regime.
These tails are the key to understanding why convergence speeds up as the temperature rises.
Moreover, our tail analysis yields a complete quantification of the convergence rates in both $f$-variation and Wasserstein distances.
Short YouTube presentations \cite{YouTube_talk} discuss our \href{https://youtu.be/P0yIqm6K5r0}{results} and their \href{https://youtu.be/ygnPmOUINWM}{proofs}, outlined in Subsections \ref{subsec:overview results} and \ref{subsec:intro discussion}, respectively.

\subsection{Overview of the main results} \label{subsec:overview results} 
Let us first define the $f$-variation distance and the Wasserstein distance on $\R^d$. 
For any measurable function $f: \R^d \to [1, \infty)$, the \textit{$f$-variation} of a signed measure $\mu$ on $(\R^d, \BB (\R^d))$ is $\norm{\mu}_f = \sup \set{ \abs{\int g \, \dd \mu}\text{: $g$ is measurable and } \abs{g} \le f}$. 
When $f \equiv 1$, $\norm{\mu}_f$ is the \textit{total variation} of $\mu$ denoted by $\norm{\cdot}_{\TV}$.\footnote{For any two probability measures $\mu, \nu$ on $(\R^d, \BB (\R^d))$, we have $\norm{\mu - \nu}_{\TV} = 2\sup_{A \in \BB(\R^d)} \abs{\mu(A) - \nu(A)}$.} 
For $p \in (0, \infty)$, let $\PP_p (\R^d)$ denote the set of probability measures on $(\R^d, \BB (\R^d))$ with finite $p$-th moment. 
We define the \textit{$L^p$-Wasserstein distance} for $\mu, \nu \in \PP_p (\R^d)$ by
$
\WW_p(\mu, \nu) = \inf_{\eta \in C(\mu, \nu)} \paren{\int_{\R^d \times \R^d} \abs{x-y}^p \, \eta (\dd x, \dd y) }^{{1}/ \paren{p \vee 1}},
$
where $C(\mu, \nu)$ collects all couplings between $\mu$ and $\nu$.\footnote{$\WW_p$ is a metric for $p \in (0,1)$, since $(A+B)^p \le A^p + B^p$ for all $A,B \ge 0$.}
We include $p \in (0,1)$ to cover the heavy-tailed case $r \in (0,1]$.

Our main results in Section~\ref{subsec:main results} characterise the explosions, moment thresholds and ergodic properties of $X$ are summarised in
Table~\ref{table:results}.  
The last column quantifies the rate of decay of $\norm{\P_x (X_t \in \cdot) - \Pi(\cdot)}_{f_p}$ for $f_p(x) = 1 + \abs{x}^p$ with $p \in [0,r)$, and of $\WW_p (\P_x (X_t \in \cdot), \Pi)^{p \vee 1}$ for $p \in (0,r)$, where the exact interpretation of these rates is given in Theorem~\ref{thm:TLD convergence rates}. 

\vspace{3pt}
\begin{center}
    \small
    \renewcommand{\arraystretch}{1}
    \setlength{\tabcolsep}{2.5pt}
    \renewcommand\tabularxcolumn[1]{>{\centering\arraybackslash}m{#1}}
    \begin{tabularx}{1.0 \textwidth}{@{}l V Z Y @{}}
        \toprule
        & \textbf{Explosion}
        & \textbf{Moment thresholds of $\abs{X_t}^p$ ($p >0$)} 
        & \footnotesize\textbf{$f_p$-variation ($p\in[0,r)$) \&\newline
        $L^p$-Wasserstein~$p\in (0,r)$} 
        \\
        \midrule
        $0\leq \ell<\frac{1}{d+r} $
        & $\zeta = \infty$ a.s.\
        & $\forall t > 0: \E_x \bracket{e^{\abs{X_t}^{p}}} < \infty \iff p \in \paren{0, 2(1-\ell(d+r))}$
        & $\approx t^{- \frac{r-p}{2 \paren*{1-\ell(d+r)}}}$ \\
        \midrule
        $\ell = \frac{1}{d+r}$
        & $\zeta = \infty$ a.s.\
        & $\forall t > 0: $ $\E_x \bracket*{\abs{X_t}^p} < \infty$, but $\E_x \bracket{e^{\abs{X_t}^{p}}} = \infty$
        & Exponential, \text{but not uniform} \\
        \midrule
        $\frac{1}{d+r} < \ell \le \frac{1+r/2}{d+r}$
        & $\zeta = \infty$ a.s.\
        & $ \E_x \bracket{\abs{X_t}^p} < \infty \iff p \in (0, r)$
        & Uniform \\
        \midrule
        $\frac{1+r/2}{d+r} < \ell < \infty$
        & $\zeta < \infty$ a.s.\
        & $\backslash$
        & $\backslash$ \\
        \bottomrule
    \end{tabularx}
    \captionof{table}{A summary of the main results (Theorems~\ref{thm:TLD explosion},~\ref{thm:TLD moments},~\ref{thm:TLD convergence rates}) on explosions, moments, and ergodicity of~\eqref{eq:tempered Langevin diffusion}  $X$ of temperature $\ell \in [0, \infty)$. 
    For $\frac{1}{d+r} < \ell \le \frac{1+r/2}{d+r}$, the moments are in fact bounded uniformly in space--time: $\sup_{s \ge t_0} \sup_{y \in \R^d} \E_y \bracket{\abs{X_s}^p} < \infty$ for all $t_0 > 0$ and $p \in [0,r)$.
    Ergodicity in the last column gives the rate of decay of $\norm{\P_x (X_t \in \cdot) - \Pi(\cdot)}_{f_p}$ for $f_p(x) = 1 + \abs{x}^p$ with $p \in [0,r)$, and of $\WW_p (\P_x (X_t \in \cdot), \Pi)^{p \vee 1}$ for $p \in (0,r)$. Some of the facts on explosions and $f$-variation in the table are from~\cite{FR2005,BM2026}, see Section~\ref{subsec:intro discussion} for details. }
    \label{table:results}
\end{center}
\vspace{3pt}

\subsection{Discussion of the results and our main contributions} \label{subsec:intro discussion}

We discuss the results in Table~\ref{table:results} from high- to low-temperature regimes, explaining the transitions in terms of the tail of $\abs{X_t}$ and outlining our main contributions along the way.

Under~\eqref{AP}, we prove that $X$ is non-explosive if and only if $\ell \le \frac{1+r/2}{d+r}$, and otherwise explodes in finite time almost surely from every starting point.
For targets with exactly polynomial tails outside a compact set,~\cite[Sec.~3.2]{FR2005} identified the same upper endpoint and established non-explosion there.
At the critical temperature, however, the radial Lyapunov construction underlying Khasminskii's criterion does not, in general, establish non-explosion under~\eqref{AP} alone (see Remark~\ref{remark:Khasminskii fails at lstarstar}).
To overcome this difficulty, we use the Itô trick~\cite{LyonsZhang1994}, exploiting the reversibility of reflected diffusions to control increments of logarithmic cutoff functions and thereby bound exit probabilities.
This yields non-explosion at the critical temperature under the asymptotic assumptions~\eqref{AP}, without requiring exact polynomial tails or additional assumptions on the rate of convergence in~\eqref{AP}.

For $\frac{1}{d+r} < \ell \le \frac{1+r/2}{d+r}$, we establish (to the best of our knowledge) the first uniform exponential convergence bounds for TLDs in $f_p$-variation and $L^p$-Wasserstein distances for every $p \in (0,r)$, where $f_p(x)=1+\abs{x}^p$.
Previously, uniform convergence was known only in total variation~\cite[Thm~16]{FR2005}.
The key ingredient in this extension is a bound on the moments that is uniform in both the initial state and time away from zero: for every $t_0>0$,
$$
    \sup_{s\ge t_0}\ \sup_{y\in\R^d}\
    \E_y\bracket{\abs{X_s}^p}<\infty
    \qquad\text{for every }p\in(0,r).
$$
Applying these estimates at a moment order strictly above $p$ allows us to upgrade uniform convergence in total variation to $f_p$-variation, from which the Wasserstein bounds follow.
To obtain the required moment estimates, we construct a time-dependent Lyapunov function using negative moments of Bessel processes, one of the main technical novelties of this paper; see Section~\ref{subsec:heuristics Lyapunov function} for the underlying intuition.

At $\ell = \frac{1}{d+r}$, we prove that $X$ is not uniformly ergodic. 
Using the lower bound on hitting-time probabilities in~\cite[Lemma~4.2]{BM2026}, we show that the total variation distance to~$\Pi$ at any time has supremum~$2$ over all starting points.
Bounding $\log \abs{X}$ by a Brownian motion with linear drift (see Section~\ref{subsubsec:heuristics moments}), we show that all polynomial moments of $\abs{X_t}$ are finite, whereas no positive power of $\abs{X_t}$ has finite exponential moments.
Hence, the tail of $\abs{X_t}$ is now thinner than that of~$\Pi$.
Accordingly, convergence in $f_p$-variation for $p \in [0,r)$ is merely exponential, as established in~\cite[Thm~16]{FR2005}.
We extend this exponential convergence to $L^p$-Wasserstein distances for $p \in (0,r)$.

For $0 \le \ell < \frac{1}{d+r}$, by relating a polynomial transformation of $\abs{X}$ to a Bessel process of negative dimension (see Section~\ref{subsubsec:heuristics moments}), we show that powers of $\abs{X_t}$ up to degree~$2(1 - \ell (d+r))$ have finite exponential moments.
We also prove matching polynomial upper and lower bounds on the convergence rate in $L^p$-Wasserstein distances for $p \in (0,r)$, which were previously known only in $f_p$-variation~\cite[Thm~16]{FR2005} and~\cite[Thm~3.1]{BM2026}.
This slow polynomial rate is again explained by the tail of $\abs{X_t}$, which is now even thinner than at $\ell = \frac{1}{d+r}$.
Indeed, the closer the tail of $\abs{X_t}$ is to that of~$\Pi$, the faster the convergence: as $\ell$ decreases from $\frac{1}{d+r}$ to~$0$, the largest power of $\abs{X_t}$ with finite exponential moments increases, while the degree of the polynomial rate decreases.

\subsection{Prior work on sampling from heavy-tailed targets} \label{subsec:related literature}

The study of TLDs with polynomially decaying targets goes back to Stramer and Tweedie \cite[Thm~3.1]{stramer1999langevin1}, who showed in dimension one that convergence is not exponential below the critical temperature $\frac{1}{1+r}$.
Roberts and Stramer \cite[Thm~3.3]{roberts2002langevin} gave drift conditions for exponential ergodicity in $\R^d$, showing in particular that for multivariate Student-$t$ targets the TLD at the critical temperature $\frac{1}{d+r}$ is exponentially ergodic, whereas the classical Langevin diffusion is not.
Fort and Roberts \cite[Thm~16]{FR2005} then obtained upper bounds on the convergence rate in $\R^d$ over the whole non-explosive temperature range, and Bre{\v{s}}ar and Mijatovi{\'{c}} \cite[Thm~3.1]{BM2026} later showed that the polynomial bound below the critical temperature is sharp.
Douc, Fort and Guillin \cite[Thm~5.5]{DoucSubgeometric09} derived upper bounds analogous to \cite[Thm~16]{FR2005} for TLDs whose targets have subexponentially decaying tails.
We expect the tail of the TLD norm to explain the convergence rates in that setting as well, and leave this question for future research.

Weighted Poincaré inequalities~\cite{BobkovLedoux2009} offer another route to exponential ergodicity at $\ell = \frac{1}{d+r}$.
Assuming~\eqref{AP} and that the function $x \mapsto \pi(x)^{-1/(d+r)}$ is convex, $\Pi$ satisfies a weighted Poincaré inequality with weight $1 + \abs{x}^2$~\cite[Prop.~3.2]{CGGR2010}, in the sense of \cite[Def.~2.4]{CGGR2010}.
This is then essentially the Poincaré inequality for $X$ at $\ell = \frac{1}{d+r}$, as observed in \cite[Sec.~2.1]{MN2026}.
Hence the semigroup of $X$ decays exponentially fast in $L^2(\Pi)$ on $\Pi$-mean-zero functions.
Since $X$ is $\Pi$-reversible (see Proposition~\ref{prop:TLD density}), this decay carries over to total variation distance whenever the density of the initial distribution with respect to $\Pi$ is in $L^2(\Pi)$.
Unlike the drift-condition approach, this yields an explicit lower bound on the exponential rate.
Moreover, $\frac{1}{d+r}$ is the smallest temperature at which this Poincaré inequality holds, and the same argument gives exponential convergence for $\frac{1}{d+r} < \ell \le \frac{1+r/2}{d+r}$. 

Related acceleration strategies have also been developed for piecewise deterministic Markov processes (PDMPs).
\cite{VasdekisRoberts2023} introduced the Speed Up Zig-Zag process, using a position-dependent speed to improve convergence for heavy-tailed targets.
\cite{BertazziVasdekis2025} placed such constructions within a general framework of time-changed Markov processes, covering both diffusions and PDMPs.
A different construction is the tempered Bouncy Particle Sampler in~\cite{MN2026}, which retains
piecewise-constant velocities and uses position-dependent velocity refreshments.
Its explicit exponential convergence bounds are obtained through hypocoercivity and a weighted Poincaré inequality for the associated tempered Langevin diffusion.

\subsection{Structure of the rest of the paper}
In Section~\ref{sec:main results and heuristics}, we present our main results in Table~\ref{table:results} and some heuristics behind them. The proofs of explosion, moment properties and ergodicity are contained in Sections~\ref{sec:proof explosion},~\ref{sec:proof moments} and~\ref{sec:proof ergodicity}, respectively.

\section{Main results} \label{sec:main results and heuristics}

Having summarised our main results informally in Table~\ref{table:results}, we now state them precisely in Section~\ref{subsec:main results} and give the probabilistic intuition behind them in Section~\ref{subsec:heuristics}. 
We then outline the construction of a time-dependent Lyapunov function in Section~\ref{subsec:heuristics Lyapunov function}, which addresses one of the main technical contributions. 
Throughout the rest of this paper, we write 
$$
\lstar \coloneqq \frac{1}{d+r} 
\myand 
\lstarstar \coloneqq \frac{1+r/2}{d+r},
$$ 
for the critical temperatures for convergence rates and explosion, respectively.

\subsection{Main results} \label{subsec:main results} 
We begin by studying the explosion time.
\begin{theorem} \label{thm:TLD explosion}
    $X$ is non-explosive if and only if $0 \le \ell \le \lstarstar$.
    Moreover, if $\ell > \lstarstar$, then $\zeta < \infty$ $\P_x$-a.s.\ for all $x\in\R^d$.
    
\end{theorem}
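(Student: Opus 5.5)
The plan is to reduce everything to the radial behaviour of $X$, read off from the action of the generator on radial power functions, and then split into three regimes: $\ell<\lstarstar$, $\ell>\lstarstar$, and the critical case $\ell=\lstarstar$.

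\emph{Radial computation.} For $f(x)=\abs{x}^q$ with $q\neq 0$, the divergence form $\AA f=\pi^{-1}\diver(\pi^{1-2\ell}\nabla f)$ together with \eqref{AP} gives, as $\abs{x}\to\infty$,
$$
\AA \abs{x}^q = q\,\pi(x)^{-2\ell}\abs{x}^{q-2}\bigl(q-\kappa_\ell+o(1)\bigr),\qquad \kappa_\ell\coloneqq 2+r-2\ell(d+r).
$$
The bracket is $q-2+d-(1-2\ell)(d+r)+o(1)$. Moreover $\pi(x)^{-2\ell}\asymp \abs{x}^{2\ell(d+r)}$. Heuristically, $\abs{X}$ has scale density $\rho^{1-d}\pi^{2\ell-1}\approx \rho^{\kappa_\ell-1}$ and speed density $\approx\rho^{-1-r}$. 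Feller's test then predicts explosion exactly when $\kappa_\ell<0$, that is, when $\ell>\lstarstar$.

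\emph{Case $\ell<\lstarstar$.} Here $\kappa_\ell>0$. Fix $q\in(0,\kappa_\ell)$ and take a $C^2$ function $V\ge 1$ equal to $\abs{x}^q$ outside a ball. By the computation above, $\AA V\le 0$ outside a large ball, so $\AA V\le C V$ on all of $\R^d$. Khasminskii's criterion then gives $\zeta=\infty$ almost surely.

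\emph{Case $\ell>\lstarstar$.} Now $\kappa\coloneqq-\kappa_\ell>0$. Take $W(x)=\abs{x}^{-\beta}$ with $\beta\in(0,\kappa)$. The leading coefficient is $(-\beta)(\kappa-\beta)<0$ and the power $2\ell(d+r)-2-\beta$ is positive, so $\AA W\le -1$ on $\set{\abs{x}\ge R_0}$. This yields two consequences.
\begin{enumerate}[(i)]
\item Dynkin's formula, applied up to the exit time of annuli $\set{R_0<\abs{x}<n}$ with $n\to\infty$, gives $\E_x[\tau_{R_0}\wedge\zeta]\le W(x)<\infty$, where $\tau_{R_0}$ is the hitting time of $\set{\abs{x}\le R_0}$.
\item Since $W(X)$ is a positive supermartingale before $\tau_{R_0}$, optional stopping gives $\P_x(\tau_{R_0}<\zeta)\le (R_0/\abs{x})^{\beta}$. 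This is at most $1/2$ on $\set{\abs{x}\ge R_1}$ for $R_1$ large.
\end{enumerate}
Hence from the sphere of radius $R_1$, with probability at least $1/2$ the process reaches infinity in finite time without returning to the ball of radius $R_0$. From any point of the ball of radius $R_1$, the process reaches that sphere almost surely, by uniform ellipticity on compacts. A geometric-trials argument over successive excursions, using the strong Markov property, then gives $\zeta<\infty$ $\P_x$-a.s.\ for every $x$.

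\emph{Case $\ell=\lstarstar$; this is the main obstacle.} Here $\kappa_\ell=0$, so the $o(1)$ term in \eqref{AP} dominates. For instance, $\AA\log\abs{x}=o(1)\abs{x}^{r}$ cannot be bounded by $C\log\abs{x}$, and Khasminskii's criterion fails.

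Instead I would use the Lyons--Zhang forward--backward decomposition. Consider $X$ reflected in the ball $B_{R^2}$ and started from the normalised $\Pi|_{B_{R^2}}$; it is reversible. Take the logarithmic cutoff
$$
\phi(x)=\frac{\log(\abs{x}/R)}{\log R},
$$
clipped to $[0,1]$, so that $\phi$ increases from $0$ on $B_R$ to $1$ outside $B_{R^2}$. Writing $\phi(X_t)-\phi(X_0)$ as half the sum of a forward and a time-reversed martingale removes the drift entirely. The Doob/BDG inequality then bounds the probability of an increment of size $1$ by $T$ times the Dirichlet energy
$$
\int\abs{\nabla\phi}^2\pi^{1-2\ell}\,\dd x\;\lesssim\;(\log R)^{-2}\int_R^{R^2}\rho^{-2-(d+r)+2+r+d-1}\,\dd\rho\;\asymp\;\frac{1}{\log R},
$$
up to a factor accounting for the uniform bound on $\pi^{-2\ell}$ over $B_R$. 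The key point is that exactly at $\ell=\lstarstar$ the exponent equals $-1$, uniformly in the $o(1)$ errors, since only the bounds $\pi\asymp\abs{x}^{-(d+r)}$ are used.

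Letting $R\to\infty$, I would show that $\P_\Pi(\text{$X$ exits } B_{R^2} \text{ before } T)\to0$ for starting points in a fixed ball. This gives non-explosion $\Pi$-a.e. It would then extend to every $x$ via the Markov property at a small time $s$, using that $\P_x(X_s\in\cdot\,,\,s<\zeta)\ll\Pi$ by local ellipticity.

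The delicate points are the normalisation of $\Pi$ on $B_{R^2}$ versus the starting set, and the coupling between the reflected and unreflected processes up to the exit time. I expect those to require the most care.
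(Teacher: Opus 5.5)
Your proposal is correct. In the critical case $\ell=\lstarstar$ it follows the paper's argument: a reflected reversible diffusion, the forward--backward martingale decomposition, a logarithmic cutoff with energy $\asymp 1/\log R$ because $\pi^{1-2\ell}\asymp\abs{x}^{2-d}$, and then extension from $\Pi$-a.e.\ to every starting point.

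There is one wrinkle in that case. Your cutoff is not constant near the reflecting sphere $\partial\B_{R^2}$. Hence $\phi(Y_t)-\phi(Y_0)-\int_0^t\LL\phi(Y_s)\,\dd s$ carries the boundary term $\int_0^t\inner{\nabla\phi,2a\mathfrak n}(Y_s)\,\dd L_s$ and is not a martingale. The cancellation still holds, because this additive functional reverses in time like the $\dd s$-integral, but you should say so. The paper avoids the issue by reflecting in $\B_{2R}$ while its smoothed cutoff is constant outside $\B_R$. You should also smooth the kink of your clipped $\phi$ at $\abs{x}=R$.

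Away from criticality you take a different route from the paper.
\begin{itemize}
\item For $\ell<\lstarstar$ you use $\abs{x}^q$ with $0<q<2+r-2\ell(d+r)$. The paper instead uses the Fort--Roberts function $U_+(x)=1+\int_1^{\abs{x}}e^{-\Psi_+(s)}\,\dd s$, which satisfies $\AA U_+\le0$ exactly outside $\B_1$. It is also the function whose failure at $\ell=\lstarstar$ under \eqref{AP} alone is discussed in the paper's remark. Below $\lstarstar$ both work, and yours is simpler.
\item For $\ell>\lstarstar$ you use $\abs{x}^{-\beta}$ only near infinity. This gives a finite expected time to return or explode, and a non-return probability at least $1/2$ from $\partial\B_{R_1}$. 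You then close with exit from compacts and geometric trials via the strong Markov property. The paper instead solves the radial ODE $f''+\psi_-f'=h$ to get a single bounded $C^2$ function $U_-$ on $\R^d$ with $\AA U_-\ge1$. That costs a regularity check at the origin. In exchange, one Dynkin step gives the uniform bound $\E_x[\zeta]\le\sup U_- - U_-(x)$. Your route needs only the asymptotics at infinity, at the price of excursion bookkeeping.
\end{itemize}
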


We focus on the case that $X$ is non-explosive from now on. Let $(\P_x)_{x\in\R^d}$ be a family of probability measures under which $X$ is a Feller process. 
Throughout, $\E_x$ denotes the expectation under~$\P_x$. 

\begin{theorem} \label{thm:TLD moments}
    $ $
    \begin{enumerate}
        \itemsep 5pt
        \item If $\ell  \in [0, \lstar)$, then for every $x \in \R^d$ and $t >0$ we have
        \begin{equation*}
            \E_x \bracket*{\exp \paren*{\abs{X_t}^p}}
            \begin{cases}
                 < \infty, & \myif p \in \left[ 0, 2 (1- \ell(d+r)) \right), \\
                = \infty, & \myif p \in (2 (1- \ell(d+r)) , \infty).
            \end{cases}
        \end{equation*}
        For $p = 2 (1- \ell(d+r)) $, there exist some constants $0 < t_1 < t_2 < \infty$ such that $\E_x \bracket*{e^{\abs{X_t}^p}} < \infty$ whenever $t < t_1$, and $\E_x \bracket*{e^{\abs{X_t}^p}} = \infty$ for every $t > t_2$.
        \item If $\ell = \lstar$, then for every $x \in \R^d$ and $t >0$ we have
        \begin{equation} \label{eq:all poly moments at lstar}
            \E_x \bracket{\abs{X_t}^p} < \infty 
            \myand 
            \E_x \bracket*{\exp \paren*{\abs{X_t}^p}} = \infty
            \myforall 
            p \in (0, \infty).
        \end{equation}
        \item If $\ell  \in (\lstar, \lstarstar]$, then for every $t_0 >0$,
        \begin{equation} \label{eq:moments uniform in time and space above lstar}
            \sup_{s \ge t_0}\ \sup_{y \in \R^d}\ \E_y \bracket{\abs{X_s}^p} < \infty  \myforall p \in [0, r).
        \end{equation}
        Conversely, if $\E_x \bracket{\abs{X_t}^p} <\infty$ for some $x\in\R^d$ and $t>0$, then $p \in [0,r)$.
    \end{enumerate}
\end{theorem}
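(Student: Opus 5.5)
The plan is to reduce everything to the radial process $R_t=\abs{X_t}$, or rather to a suitable power transform of it, and compare it with one-dimensional diffusions whose marginal tails are explicit.

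\textbf{The radial SDE.} For $\abs{x}$ large, apply Itô's formula to $\abs{X_t}$, using \eqref{AP} to write $\pi(x)^{-2\ell}\approx c_\pi^{-2\ell}\abs{x}^{2\ell(d+r)}$ and $\inner{x,\nabla\log\pi(x)}\approx-(d+r)$. This gives, outside a large ball,
$$
\dd R_t \approx c\,R_t^{2\ell(d+r)-1}\bigl((d-1)-(1-2\ell)(d+r)\bigr)\,\dd t+\sqrt{2c}\,R_t^{\ell(d+r)}\,\dd W_t ,
$$
with $c=c_\pi^{-2\ell}$ and errors of relative size $o(1)$. Set $\alpha=1-\ell(d+r)$.

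\textbf{Part (1): $\ell<\lstar$, so $\alpha>0$.} Consider $Y=R^{\alpha}/\alpha$. Itô's formula turns $Y$ into a process with unit-order diffusion coefficient and drift $\delta'/(2Y)$ (up to $o(1/Y)$ corrections), where $\delta'$ is some effective dimension. So $Y$ behaves like a Bessel process, possibly of negative dimension, which is what Section~\ref{subsubsec:heuristics moments} suggests. A Bessel process has Gaussian-type marginal tails $\P(Y_t>y)\approx \exp(-y^2/(2ct))$, hence $\P(R_t>u)\approx\exp(-\kappa u^{2\alpha}/t)$. This immediately explains the threshold $p=2\alpha$, and the time-dependence of finiteness at $p=2\alpha$.

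For the upper bound, I would use a time-dependent Lyapunov function $V(t,x)=\exp\bigl(\lambda(t)\abs{x}^{2\alpha}\bigr)$ with $\lambda$ decreasing, say $\lambda(t)=\lambda_0/(1+Kt)$. Choosing $K$ large makes $(\partial_t+\AA)V\le CV$ off a compact set; a localisation argument with Gronwall then gives finiteness of $\E_x[e^{\lambda(t)\abs{X_t}^{2\alpha}}]$. From this, $\E_x[e^{\abs{X_t}^p}]<\infty$ for every $p<2\alpha$ and every $t$, and also for $p=2\alpha$ when $t$ is small. To get all $t$ for $p<2\alpha$, I would use the Markov property together with the fact that the bound holds from any start.

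For the lower bound, I would use a comparison theorem for one-dimensional SDEs. Bound $Y$ from below by a Bessel-type process with slightly smaller drift, started after the first hitting time of a large sphere, which occurs with positive probability by irreducibility. Explicit Gaussian lower tails then give infinite exponential moments for $p>2\alpha$, and for $p=2\alpha$ once $t$ is large.

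\textbf{Part (2): $\ell=\lstar$, so $\alpha=0$.} Here $\log R$ has a bounded drift and a diffusion coefficient asymptotically equal to $\sqrt{2c}$. I would dominate it by a Brownian motion with linear drift, using comparison (or Itô on $e^{p\log R}$ with Gronwall). This gives lognormal-type tails, so all polynomial moments are finite. A lower comparison by a drifted Brownian motion, again after hitting a large sphere, gives $\P(R_t>u)\ge \exp(-C(\log u)^2)$, which forces $\E_x[e^{R_t^p}]=\infty$ for every $p>0$.

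\textbf{Part (3): $\ell\in(\lstar,\lstarstar]$, so $\alpha<0$.} Now $R^{-\abs{\alpha}}$ is the natural variable and the process comes down from infinity. For the uniform bound \eqref{eq:moments uniform in time and space above lstar}, I would build a time-dependent Lyapunov function $V(t,x)=\phi(t)+ \abs{x}^p\wedge$(something). More concretely, the Lyapunov function would be built from negative moments of a Bessel process: $V(t,x)=\E[\rho_t^{-q}\mid\rho_0=\abs{x}^{\alpha}/\alpha]$ for a Bessel process $\rho$ of suitable dimension and $q=p/\abs{\alpha}$, which is finite uniformly in the starting point for $t\ge t_0$ when $p<r$. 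I would then verify $(\partial_t+\AA)V\le C$ using the $o(1)$ errors, which gives the bound for $s\in[t_0,2t_0]$ uniformly in $y$. Extension to all $s\ge t_0$ follows from the Markov property: $\E_y[\abs{X_s}^p]=\E_y\bigl[\E_{X_{s-t_0}}[\abs{X_{t_0}}^p]\bigr]$. The converse (finiteness implies $p<r$) would come from comparing with the invariant tail. I would use reversibility and the explicit density bound of Proposition~\ref{prop:TLD density}: for $t>0$, $\P_x(X_t\in\dd y)\ge c\,\pi(y)\,\dd y$ on large regions. This holds because the uniform return from infinity makes the transition density comparable to $\pi$ far out, and integrating $\abs{y}^p$ against it diverges for $p\ge r$.

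\textbf{Main obstacle.} I expect the hardest step to be the uniform-in-space estimate in part (3), especially at $\ell=\lstarstar$. There the effective Bessel dimension is borderline, and the $o(1)$ errors in \eqref{AP} must be absorbed without losing uniformity. I would handle this by working at order $p'\in(p,r)$, so that there is slack in the dimension to absorb the errors.
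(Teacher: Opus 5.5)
Your upper bounds in parts (1) and (2) are sound, and they take a different route from the paper, which instead dominates $\Phi(X)$ by a time-changed Brownian motion via Dambis--Dubins--Schwarz. For $w=\abs{x}^{2(1-\ell(d+r))}$ one has $\pi^{-2\ell}\abs{\nabla w}^2\asymp w$ and $\AA w$ bounded, so $\lambda'=-K\lambda^2$ makes $e^{-Ct}\exp(\lambda(t)w(X_t))$ a supermartingale. Since $\lambda(t)>0$ for every $t$, your extra Markov step is unnecessary. At $\ell=\lstar$ one has $\AA\VV_p\le C\VV_p$ directly.

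The genuine gap is in part (3). You take $V(t,x)=\E[\rho_t^{-q}\mid\rho_0=\Phi(x)]$, so the Bessel clock runs forward. Then $(\partial_t+\AA)V\le C$ only yields $\E_y[V(s,X_s)]\le V(0,y)+Cs$. Here $V(0,y)=\Phi(y)^{-q}\asymp\abs{y}^p$ is unbounded in $y$, and $V(s,X_s)\le Cs^{-q/2}$ says nothing about $\abs{X_s}^p$. Uniformity instead comes from running the clock backwards. Take $\UU(t,x)=U_{\delta',q}(T-t+\eps_0,\Phi(x))+K(T-t)$ with $(\partial_t+\AA)\UU\le 0$. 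This gives $\E_y[U_{\delta',q}(\eps_0,\Phi(X_T))]\le U_{\delta',q}(T+\eps_0,\Phi(y))+KT\le C(T+\eps_0)^{-q/2}+KT$ for all $y$. The left side dominates $c\,\E_y[(\eps_0+\Phi(X_T)^2)^{-q/2}]$, and one then lets $\eps_0\downarrow 0$.

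Proving $(\partial_t+\AA)\UU\le 0$ needs two things. First, the slack must sit in the Bessel dimension, $q<\delta'<\deltastar=r/\abs{\alpha}$, so that the first-order error $\theta_\ell^2(\delta_\ell-\delta')\partial_zU/(2z)$ is negative (since $\partial_zU<0$). Second, you need the estimate $\abs{\partial_sU}\le Cz^{-1}\abs{\partial_zU}$ to absorb the multiplicative error $(\theta_\ell^2-1)$ in front of the Bessel generator. Raising the moment order to $p'\in(p,r)$, as you propose, supplies neither. Nor is $\ell=\lstarstar$ special here: $U_{\delta',q}$ is used only as an explicit function, and $\deltastar=2$ enters only through non-explosion, which is needed for the localisation.

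Two further steps need repair.

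In the lower bounds of (1) and (2), $\Phi(X)$ is not a one-dimensional diffusion: its martingale part has rate $\theta_\ell(X_t)^2$, a function of the full state. So a comparison theorem for one-dimensional SDEs does not apply as stated. Write instead $M_t=W_{\inner{M}_t}$, so that $\Phi(X_t)\ge\Phi(x)-Kt+W_{\inner{M}_t}$ with $K=\sup\abs{\AA\Phi}<\infty$. No Bessel comparison is needed, and one would in any case be of negative dimension. You must then secure $\inner{M}_t\in[t/2,3t/2]$ on an event on which $X$, started far out, does not return to the ball outside of which $\theta_\ell^2\in[1/2,3/2]$. For $p=2(1-\ell(d+r))$, the far-out position must be reached by time $t/2$ (via the Markov property and positivity of the density), so that the Gaussian variance is of order $t$.

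In the converse of (3), Proposition~\ref{prop:TLD density} gives only continuity, positivity and symmetry of $q$, not a bound $q\ge c$. You obtain $\inf_y q(t,x,y)>0$ from Chapman--Kolmogorov and symmetry: $q(t,x,y)\ge\inf_{w\in\bar{\B}_R}q(t/2,x,w)\,\P_y(X_{t/2}\in\bar{\B}_R)$. The uniform moment bound then makes the last factor at least $1/2$ for every $y$.
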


The following theorem quantifies the rate at which the law of $X_t$ converges to $\Pi$. 

\begin{theorem} \label{thm:TLD convergence rates}
    Let $f_p (x) = 1 + \abs{x}^p$ for $x \in \R^d$, $p \ge 0$. 
    \vspace{3pt}
    \begin{enumerate}
        \itemsep 5pt
        \item If $\ell  \in [0, \lstar)$, $X$ fails to be exponentially ergodic. More precisely, fix $p \in (0,r)$ and set $\beta_p := (r-p)/\paren*{2 (1- \ell(d+r))} > 0$. For any $\eps > 0$ and $x \in \R^d$, there exist constants $C_{-}(x), C_{+}(x) > 0$ such that, for all $t \ge 1$,  
        \begin{equation} \label{ergodicity:polynomial Wasserstein}
            C_{-}(x) t^{- \beta_p - \eps} 
            \le 
            \WW_p (\P_x (X_t \in \cdot), \Pi)^{p \vee 1}
            \le 
            C_{+}(x) t^{- \beta_p + \eps}. 
        \end{equation}
        \item If $\ell = \lstar$, $X$ is exponentially ergodic. 
        For every $p \in (0, r)$, there exists a constant $c >0$ such that, for every $x \in \R^d$, there is a constant $C(x)>0$ satisfying
        \begin{equation} \label{ergodic:exponentially Wasserstein}
            \WW_p (\P_x (X_t \in \cdot), \Pi)^{p \vee 1}
            \le 
            C(x) e^{- c t} \quad \text{for all} \quad t \ge 1.
        \end{equation}
        However, $X$ is not uniformly ergodic as $\sup_{x \in \R^d} \norm{\P_x (X_t \in \cdot) - \Pi(\cdot)}_{\TV} = 2$ for all $t > 0$. 
        \item If $\ell \in (\lstar, \lstarstar]$, $X$ is uniformly ergodic. 
        For every $p \in (0, r)$ there exist constants $c, C>0$ such that, for all $x \in \R^d$,
        \begin{equation} \label{ergodic:uniformly f var}
            \norm{\P_x (X_t \in \cdot) - \Pi(\cdot)}_{f_p} 
            \le 
            C e^{- c t} \quad \text{for all} \quad t \ge 1.
        \end{equation}
        Moreover, for every $p \in (0,r)$, this bound \eqref{ergodic:uniformly f var} remains valid with the $f_p$-variation distance replaced by ${\WW_p (\P_x (X_t \in \cdot), \Pi)^{p \vee 1}}$. 
    \end{enumerate}
\end{theorem}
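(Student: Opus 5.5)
The statement to prove is Theorem~\ref{thm:TLD convergence rates}. I will treat the three regimes separately, in each case combining the moment information of Theorem~\ref{thm:TLD moments} with known $f$-variation results. A general device converts $f$-variation into Wasserstein bounds. For $p\in(0,r)$, the coupling that keeps the common mass $\mu\wedge\nu$ on the diagonal gives
\[
\WW_p(\mu,\nu)^{p\vee1}\le C_p\,\norm{\mu-\nu}_{f_{p\vee 1\cdot p/(p\vee1)}} .
\]
More concretely, when $p\le 1$, $\WW_p(\mu,\nu)\le \int \abs{x}^p\,\dd\abs{\mu-\nu}\le \norm{\mu-\nu}_{f_p}$. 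When $p>1$, $\WW_p^p\le 2^{p-1}\int\abs{x}^p\,\dd\abs{\mu-\nu}\le 2^{p-1}\norm{\mu-\nu}_{f_p}$. Thus every Wasserstein upper bound reduces to an $f_p$-variation upper bound with the same $p$.

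\textbf{Regime $\ell\in(\lstar,\lstarstar]$.} Uniform ergodicity in total variation, $\sup_y\norm{P_t(y,\cdot)-\Pi}_{\TV}\le Ce^{-ct}$, is known from \cite[Thm~16]{FR2005}. To upgrade to $f_p$, fix $q\in(p,r)$ and write $P_t=P_{t-1}P_1$ for $t\ge 2$. For a test function $\abs{g}\le f_p$, split $g=g\1{\abs{x}\le R}+g\1{\abs{x}>R}$. The truncated part contributes at most $(1+R^p)\norm{P_t(y,\cdot)-\Pi}_{\TV}$. The tail part is bounded by $R^{p-q}\bigl(\sup_{s\ge1}\sup_y\E_y\abs{X_s}^q+\Pi(\abs{x}^q)\bigr)$, which is finite by \eqref{eq:moments uniform in time and space above lstar} and because $q<r$. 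Choosing $R=e^{ct/q}$ yields a uniform bound $Ce^{-c't}$. The Wasserstein bound then follows from the device above. For $t\in[1,2]$ the bound is trivially bounded, again by \eqref{eq:moments uniform in time and space above lstar}.

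\textbf{Regime $\ell=\lstar$.} Exponential $f_q$-ergodicity for $q<r$ with $C(x)\propto V(x)$ is given in \cite[Thm~16]{FR2005}, and the Wasserstein claim follows from the same device. For non-uniformity, fix $t$ and a compact set $K$ with $\Pi(K)\ge 1-\eps$. Using the hitting-time lower bound \cite[Lemma~4.2]{BM2026}, or equivalently the Brownian-with-drift comparison for $\log\abs{X}$, I will show that $\P_y(X_t\in K)\to0$ as $\abs{y}\to\infty$. The reason is that $\log\abs{X}$ moves by $O(1)$ in time $t$ with high probability, so from $\abs{y}=\rho$ the process stays near scale $\rho$. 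Hence $\norm{P_t(y,\cdot)-\Pi}_{\TV}\ge 2(\Pi(K)-P_t(y,K))\to 2-2\eps$.

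\textbf{Regime $\ell<\lstar$.} For the upper bound, \cite[Thm~16]{FR2005} and \cite[Thm~3.1]{BM2026} give the $f_p$-variation rate $t^{-\beta_p+\eps}$, and the device converts it to Wasserstein. The lower bound is the main obstacle. I plan to use the tail mismatch. Let $\alpha=2(1-\ell(d+r))$. By the Bessel comparison behind Theorem~\ref{thm:TLD moments}(1), with uniformity in $t$, $\P_x(\abs{X_t}>R)\le C\exp(-cR^{\alpha}/t)$. Take $R_t=t^{1/\alpha+\delta}$; then $\P_x(\abs{X_t}>R_t)$ is superpolynomially small, while $\Pi(\abs{x}>R_t)\asymp R_t^{-r}$. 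Any coupling $\eta$ of $\P_x(X_t\in\cdot)$ and $\Pi$ must transport mass $\gtrsim R_t^{-r}$ from $\{\abs{y}>2R_t\}$ to $\{\abs{x}\le R_t\}$, costing at least $\abs{y}^p/2^p$. This gives
\[
\WW_p^{p\vee1}\gtrsim\int_{\abs{y}>2R_t}\abs{y}^p\,\Pi(\dd y)\asymp R_t^{p-r}=t^{-\beta_p-\eps'} .
\]
If $p>1$, Jensen gives $\WW_p^p\ge$ the same cost expression. The delicate point is the time-uniform Gaussian-type tail estimate for $\abs{X_t}^{\alpha/2}$, which I expect to extract from the negative-dimension Bessel comparison via a supermartingale $\exp(\lambda\abs{X_s}^\alpha/(s+1))$-type bound.
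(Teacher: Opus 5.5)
Most of your plan coincides with the paper's proof. The Wasserstein upper bounds via $\WW_p^{p\vee1}\le 2^{p\vee1-1}\int\abs{x}^p\,\dd\abs{\mu-\nu}\le 2^{p\vee1-1}\norm{\mu-\nu}_{f_p}$ are the paper's \eqref{eq:f-var control wasserstein}. Your upgrade from uniform total variation to $f_p$-variation for $\ell\in(\lstar,\lstarstar]$ is exactly Lemma~\ref{lemma:TV to fp upgrade}: truncate at $R=e^{ct/q}$ and control the tail by the uniform $q$-moments \eqref{eq:moments uniform in time and space above lstar}. Two small corrections. First, \cite[Thm~16]{FR2005} is proved under a different assumption on $\pi$, which is why the paper re-derives the drift conditions under \eqref{AP} in Appendix~\ref{appendix:upper bound f var convergence rate}. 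Second, \cite[Thm~3.1]{BM2026} gives the lower, not the upper, $f_p$-variation bound.

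For non-uniformity at $\ell=\lstar$, your Brownian-comparison option is valid and simpler than the paper's route. The paper combines the supermartingale for $1/\log\abs{X}$, \cite[Lemma~4.2]{BM2026} and the non-confinement Lemma~\ref{lemma:unbounded X}. Since the constants $K,K'$ in \eqref{eq:bound by BM} do not depend on the starting point, for fixed $t,\rho$ you get directly $\P_y(X_t\in\bar{\B}_\rho)\le\P\bigl(\inf_{s\le K't}W_s\le\sup_{\bar{\B}_\rho}\Phi-\Phi(y)+Kt\bigr)\to0$ as $\abs{y}\to\infty$.

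The lower bound for $\ell<\lstar$ is where your proposal has a gap. The transport step is sound, and it is the same tail-mismatch mechanism as Lemma~\ref{lemma:lower bound Wasserstein convergence}: every coupling costs at least $R^p\bigl(\Pi(\abs{y}>2R)-\P_x(\abs{X_t}>R)\bigr)$. The problem is the input. The bound $\P_x(\abs{X_t}>R)\le C\exp(-cR^{2(1-\ell(d+r))}/t)$, uniformly in $t$, does not follow from the comparison behind Theorem~\ref{thm:TLD moments}(1). There \eqref{eq:bound by BM} gives $\Phi(X_t)\le\Phi(x)+Kt+\sup_{s\le K't}W_s$. The linear term $Kt$, which comes from the global bound $\AA\Phi\le K$, localises $\Phi(X_t)\asymp\abs{X_t}^{1-\ell(d+r)}$ only at scale $t$, not $\sqrt t$. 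Plugged into your argument it yields only $t^{-2\beta_p-\eps}$. Since this estimate carries the whole lower bound, it must be proved.

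Your suggested supermartingale does work. For $F(s,x)=\exp\bigl(\lambda\Phi(x)^2/(s+1)\bigr)$ and $\abs{x}>2$,
\[
(\partial_s+\AA)F=F\Bigl[\frac{\lambda\theta_\ell^2\delta_\ell}{s+1}+\frac{\lambda\Phi^2(2\lambda\theta_\ell^2-1)}{(s+1)^2}\Bigr],\qquad \delta_\ell=\frac{r-\eps_\ell}{\ell(d+r)-1}<0\ \text{ far out}.
\]
So $(\partial_s+\AA)F\le0$ outside a large ball once $\lambda\le1/3$, and $(\partial_s+\AA)F\le C/(s+1)$ on that ball. Hence $\E_xF(t,X_t)\le F(0,x)+C\log(1+t)$, and the logarithm is harmless.

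A cheaper input, the one the paper uses, also suffices. Take $\gamma=r+2(1-\ell(d+r))-\delta>r$; then $\AA\VV_\gamma\le K$, so $\E_x\VV_\gamma(X_t)\le\VV_\gamma(x)+Kt$. By Markov's inequality, $\P_x(\abs{X_t}>R_t)\le\frac12\Pi(\abs{y}>2R_t)$ for all $t\ge1$ with $R_t=M(x)t^{1/(\gamma-r)}$ and $M(x)$ large. This yields the rate $t^{-(r-p)/(2(1-\ell(d+r))-\delta)}$, i.e.\ the claimed $t^{-\beta_p-\eps}$, with no Gaussian tail needed.
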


\begin{remark} \label{remark:convergence rate results}
    As noted in Section~\ref{subsec:intro discussion}, parts of Theorem~\ref{thm:TLD convergence rates} were already known in $f_p$-variation.
    The upper bounds in \eqref{ergodicity:polynomial Wasserstein} and \eqref{ergodic:exponentially Wasserstein} for $p \in [0,r)$, as well as \eqref{ergodic:uniformly f var} in total variation, were obtained in \cite[Thm~16]{FR2005}, under a related but different assumption on $\pi$ and for $f$-variations close to $f_p$.
    The matching lower bound in \eqref{ergodicity:polynomial Wasserstein} for $d \ge 2$ follows directly from \cite[Thm~3.1]{BM2026}.
    Beyond these results, we extend all of the above bounds to the $L^p$-Wasserstein distances, prove that the exponential convergence at $\ell = \lstar$ is not uniform, and upgrade \eqref{ergodic:uniformly f var} from total variation to $f_p$-variation for every $p \in (0,r)$.
\end{remark}

\subsection{Probabilistic intuition} \label{subsec:heuristics}
We now give some intuitive justification for our main results. 
For simplicity, we assume that $\pi$ has an exact polynomial tail, that is, $\pi(x) = \abs{x}^{-d-r}$ for $\abs{x}$ sufficiently large. 
We first explain heuristically why the critical temperature thresholds $0<\lstar<\lstarstar$ arise and then provide an intuitive explanation for the behaviour of the moments of $\abs{X_t}$ in the various regimes via Bessel processes. 

\subsubsection{Critical temperature thresholds}\label{subsubsec:critical temperature thresholds}
Both critical temperature thresholds, the one at which the convergence rate changes and the one at which explosion sets in, can be read off by applying the extended generator to powers of the norm.
Let $\gamma > 0$ and let $\VV_\gamma \in C^2(\R^d)$ be a function with values in $[1,\infty)$ such that $\VV_\gamma (x) = \abs{x}^\gamma$ for all sufficiently large $\abs{x}$.
Applying the extended generator $\AA$, defined in Section~\ref{sec:proof explosion}, to $\VV_\gamma$ gives, for all sufficiently large $\abs{x}$,
\begin{equation}\label{eq:exact poly tail generator on polynomials}
    \AA \VV_\gamma 
    = \inner{b, \nabla \VV_\gamma} + \pi^{-2\ell} \Delta \VV_\gamma
    = - \gamma \paren*{(r - 2 \alpha) - \gamma} \VV_\gamma^{1 + {2 \alpha}/{\gamma}}, 
    \quad \text{with} \quad
    \alpha := \ell (d+r) - 1. 
\end{equation} 

If $0 \le \ell < \lstarstar$, then $r - 2\alpha > 0$, and for every $\gamma \in (0,r - 2 \alpha)$ the right-hand side of \eqref{eq:exact poly tail generator on polynomials} is negative; hence \eqref{eq:exact poly tail generator on polynomials} is a Foster--Lyapunov drift condition outside a compact set. As $\ell$ increases through $\lstar$, the parameter $\alpha$ changes sign, and the function $u \mapsto u^{1 + {2 \alpha}/{\gamma}}$ passes from sublinear through linear to superlinear; by \cite{DoucSubgeometric09,dmt1995exponential}, these three cases correspond to polynomial, exponential and uniform convergence, respectively.

In contrast, if $\ell > \lstarstar$, or equivalently $2 \alpha > r$, we have the constant $- \gamma \paren*{(r - 2 \alpha) - \gamma} > 0$ and the exponent $1 + {2\alpha}/{\gamma} > 1$ for every $\gamma > 0$. 
Thus, in this regime, $\VV_\gamma(X)$ has a positive superlinear drift and therefore explodes in finite time, just as a positive function $f$ on $\R_+$ satisfying $f' = c f^{\beta}$ with $c>0$ and $\beta > 1$ blows up in finite time.

At the critical temperature $\ell = \lstarstar$, $X$ is non-explosive, even though $\AA \VV_\gamma = \gamma^2 \VV_\gamma^{1 + {r}/{\gamma}} > 0$ for every $\gamma > 0$ and all sufficiently large $\abs{x}$.
One way to see this is that $\AA \log \abs{x} = (2\alpha - r) \abs{x}^{2\alpha} = 0$ for all sufficiently large $\abs{x}$ in this regime.
This also indicates that, compared with the case $0 \le \ell < \lstarstar$, proving non-explosion requires a more delicate argument, for which we use the Itô trick \cite{LyonsZhang1994} as discussed in Section~\ref{subsec:intro discussion}.

\subsubsection{Moment thresholds} \label{subsubsec:heuristics moments}
The moment properties can be understood through a spatial transformation that turns $\abs{X_t}$ into a Bessel process.
Let $\pi(x) = \abs{x}^{-d-r}$ whenever $\abs{x} \ge R$ for some $R>0$. 
Let $\Phi: \R^d \to (0, \infty)$ be a $C^2$ function such that: for $\abs{x} \ge R$, $\Phi(x) = \abs{x}^{- \alpha} / \paren*{\sqrt{2} \abs{\alpha}}$ when $\ell \neq \lstar$ and $\Phi (x) = \log \abs{x} / \sqrt{2}$ when $\ell = \lstar$. 
Fix $x_0 \in \R^d$ with $\abs{x_0} > R$.
With $\tau_R = \inf \set*{t>0: \abs{X_t} \le R}$, Itô's formula gives, on $\set{t \le \tau_R}$,
\begin{equation} \label{eq:heuristic spatial transformation}
    \Phi(X_t)
= 
\Phi(x_0) + \operatorname{sgn}(\alpha) \frac{r - \alpha}{\sqrt 2} \int_0^t  \abs{X_s}^{\alpha} \dd s + W_t
\xeq{\text{if $\ell \neq \lstar$}}
\Phi(x_0) + \int_0^t \frac{\deltastar - 1}{2 \Phi(X_s)} \dd s + W_t
\end{equation}
where $W$ is a standard one-dimensional Brownian motion, $\operatorname{sgn}(\alpha)$ is the sign of $\alpha$, with the convention $\operatorname{sgn}(0) = -1$, and $\deltastar = r/\alpha$ for $\ell \neq \lstar$.

For $0 \le \ell \le \lstar$, we have $\alpha \le 0$, so the drift of $\Phi(X)$ is negative.
Thus, up to time $\tau_R$, $\Phi(X)$ is dominated by a Brownian motion, and in this regime the moment properties of $\abs{X}$ are inherited from the exponential moments of Brownian motion.

For $\lstar < \ell \le \lstarstar$, $\Phi(X)$ behaves like a Bessel process of dimension $\deltastar \in [2,\infty)$ up to time $\tau_R$.
Rewriting $\abs{X_t}^p$ in terms of $\Phi(X_t)$ gives, for $p \in [0,r)$,
\begin{equation}\label{heuristic:neg moments Bessel}
    \E_{x_0} \bracket*{\abs{X_t}^p \1{t < \tau_R}}
    =
    \paren{\sqrt{2} \alpha}^{- \nu} \E_{x_0} \bracket*{\Phi(X_t)^{-\nu} \1{t < \tau_R}} \for
    \nu = p/\alpha \in [0, \deltastar),
\end{equation}
which explains the equivalence $\E_x \bracket*{\abs{X_t}^p} < \infty \iff p \in [0,r)$: a negative moment of a Bessel process is finite if and only if its order is strictly smaller than the dimension.
Moreover, these negative moments stay finite even when the Bessel process starts at zero, which corresponds to $\abs{X}$ starting at ``infinity''.
We therefore expect the upper bound on the $p$-th moment of $\abs{X_t}$ to be uniform in space and, by the Markov property, also uniform in time, as confirmed by Theorem~\ref{thm:TLD moments}~(3).

This connection to Bessel processes also explains why $X$ explodes in finite time almost surely for $\ell > \lstarstar$. 
Indeed, a Bessel process of positive dimension hits zero almost surely if and only if its dimension lies in $(0,2)$, and $\deltastar = r/\alpha \in (0,2)$ holds exactly in this regime.

\subsection{A time-dependent Lyapunov function} \label{subsec:heuristics Lyapunov function}

We now outline how to prove that $\abs{X_t}$ has finite $p$-th moments for $p \in (0,r)$ when $\lstar < \ell \le \lstarstar$, which is one of the main technical contributions of this work. 
For general $\pi$ satisfying~\eqref{AP}, the connection with Bessel processes in~\eqref{eq:heuristic spatial transformation} does not yield this bound directly.
The path domination argument used for $0 \le \ell \le \lstar$ breaks down in this regime, because the infimum of a Bessel process has fewer finite negative moments than the process itself (see \eqref{eq:Bessel lose moments} in Section~\ref{subsec:moments Bessel}).

We instead use a Grönwall-type inequality for expectations~\cite[Lemma~2.10]{BM2026}.
Fix $p \in (0,r)$ and let $\AA$ be the extended generator of $X$, as defined in Section~\ref{sec:proof explosion}.
By this inequality, to show finite $p$-th moments it suffices to find a function $\UU$ that approximates $\abs{x}^p$ for large $\abs{x}$ and satisfies the drift condition $\AA \, \UU \le c \, \UU$ for some $c \ge 0$.
The obvious choice $\UU(x) = \abs{x}^p$ fails when $p \in (r - 2\alpha, r)$, because $\AA \abs{x}^p \ge c' \abs{x}^{p + 2\alpha}$ for some constants $c' > 0$ (recall $\alpha > 0$) whenever $\abs{x}$ is sufficiently large (see \eqref{eq:generator on polynomial} in Section~\ref{sec:proof moments}). 

We resolve this issue by letting $\UU$ depend on time and satisfy $(\partial_t + \AA) \, \UU \le c \, \UU$.
As noted in~\eqref{heuristic:neg moments Bessel}, a natural candidate is the closed form of the $\nu$-th negative moment of a Bessel process of dimension $\deltastar=r/\alpha$, which we denote by $U_{\deltastar, \nu}$ (defined in \eqref{eq:closed forms of Bessel negative moments} in Section~\ref{subsec:moments Bessel}). 
Fix $T>0$ and set $\UU(t,x) = U_{\deltastar, \nu} (T-t, \Phi(x))$ for $t \in [0,T]$.
Since $\Phi(X)$ evolves as a Bessel process of dimension $\deltastar$ away from the origin, we expect $U_{\deltastar, \nu} (s, \Phi(x))$ to solve the Kolmogorov backward equation $\partial_s f = \AA f$, which would imply $(\partial_t + \AA) \, \UU = 0$.
It turns out that, after a small modification, $\UU$ indeed approximates $\abs{x}^p$ for large $\abs{x}$ and satisfies $(\partial_t + \AA) \, \UU \le 0$ as shown in Proposition~\ref{prop:Bessel Lyapunov is non-positive}.

\section{Explosions} \label{sec:proof explosion}

We prove the explosion results in Theorem~\ref{thm:TLD explosion} in this section. 
Let us first introduce some notation. 
For $\rho \ge 0$, let $\B_\rho = \set{x \in \R^d: \abs{x}< \rho}$ denote the open ball of radius $\rho$, with closure $\bar{\B}_\rho$.
For a subset $A \subset \R^d$, $A^c$ denotes its complement. 
Throughout the rest of this paper, we assume without loss of generality that $c_\pi = 1$. 

We then define the extended generator of $X$. 
Following \cite[Chap.~VII, Def.~1.8]{RevuzYor1999}, let $\DD (\AA)$ be the set containing all measurable functions $V: \R^d \to \R$ such that: there exists a measurable function $W: \R^d \to \R$ such that for every $t \ge 0$ and $x \in \R^d$, $\int_0^t \abs{W(X_s)} \dd s < \infty$ $\P_x$-almost surely and 
\begin{equation} \label{eq:extended generator definition}
    \paren*{V(X_t) - V(x) - \int_0^t W(X_s)  \dd s}_{t \ge 0} 
    \quad
    \text{is a $\P_x$-local martingale w.r.t.\ $(\FF_t)_{t \ge 0}$}.  
\end{equation}
We then denote $W = \AA V$ and call $(\AA, \DD(\AA))$ the \textit{extended generator} of $X$. 
By Itô's formula, we have: for any $V \in C^2(\R^d)$, 
\begin{equation} \label{eq:generator on general functions}
    \AA V
    = \inner{b, \nabla V} + \frac{1}{2} \tr (2 a \nabla^2 V)
    = \pi^{-1} \diver (\pi a \nabla V)
    = \pi^{-2\ell } \bracket*{(1 - 2 \ell) \inner{\nabla \log \pi, \nabla V} + \Delta V }, \quad 
    a := \pi^{-2 \ell}\imatrix_d,
\end{equation}
where $\nabla^2$ is the Hessian and $\diver$ is the divergence operator.

\subsection{Explosion time properties for \texorpdfstring{$\ell \neq \lstarstar$}{ℓ ≠ ℓ**}} \label{subsec:explosion Khasminskii}

We study the explosion time via the following \textit{Khasminskii's explosion test} criteria. 

\begin{lemma} \label{lemma:explosion test}
    Define the localising sequence $\zeta_n = \inf \set{t>0: \abs{X_t} \ge n}$, $n \in \N$.  
    \begin{enumerate}
        \item If there exists a function $V \in C^2 (\R^d)$ such that: for some $K>0$, $\AA V \le K V$ on $\R^d$ and $\lim_{\rho \to \infty} \inf_{\abs{x} = \rho} V(x) = \infty$, then $X$ is non-explosive; 
        \item If there exists a bounded function $V \in C^2 (\R^d)$ such that $\AA V \ge c >0$ on $\R^d$, then: for every $x$ in $\R^d$, $\E_x[\zeta] \le \frac{1}{c} \paren*{\sup_{y \in \R^d} V(y) - V(x)}$; in particular, $\zeta < \infty$ $\P_x$-a.s.
    \end{enumerate}
\end{lemma}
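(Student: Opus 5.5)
Both parts follow from Itô's formula applied to $V(X)$ up to the exit times $\zeta_n$, using that $X$ stays in the compact ball $\bar{\B}_n$ on $[0,\zeta_n]$. Hence $V$, $\nabla V$, $\AA V$ and the coefficients $b,\sigma$ are bounded there, the stochastic integral stopped at $\zeta_n$ is a true martingale, and Dynkin's formula $\E_x[V(X_{t\wedge\zeta_n})] = V(x) + \E_x[\int_0^{t\wedge\zeta_n} \AA V(X_s)\,\dd s]$ holds for every $t\ge 0$, $n\in\N$. Recall that $\zeta_n \uparrow \zeta$.

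For part (1), I first shift $V$ by a constant so that it is positive. Since $\inf_{\abs{x}=\rho}V(x)\to\infty$ and $V$ is continuous, $V$ is bounded below. Replacing $V$ by $V+m$ with $m\ge 0$ large keeps $\AA V$ unchanged and gives $\AA V \le K V \le K(V+m)$, so we may assume $V\ge 1$. Next I apply Itô's formula to $e^{-Kt}V(X_t)$ stopped at $\zeta_n$; its drift $e^{-Kt}(\AA V - KV)(X_t)\le 0$. This gives
\[
\E_x\bigl[e^{-K(t\wedge\zeta_n)}V(X_{t\wedge\zeta_n})\bigr]\le V(x).
\]
On $\{\zeta_n\le t\}$ we have $\abs{X_{\zeta_n}}=n$ (assuming $\abs{x}<n$), so $V(X_{\zeta_n})\ge \inf_{\abs{y}=n}V(y)=:v_n$. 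Therefore $e^{-Kt} v_n \P_x(\zeta_n\le t)\le V(x)$. Letting $n\to\infty$, $v_n\to\infty$, so $\P_x(\zeta\le t)\le \lim_n \P_x(\zeta_n\le t)=0$ for every $t$. Hence $\zeta=\infty$ a.s.

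For part (2), Dynkin's formula with $\AA V\ge c$ gives $c\,\E_x[t\wedge\zeta_n]\le \E_x[V(X_{t\wedge\zeta_n})]-V(x)\le \sup V - V(x)$. By monotone convergence in $n$ and then in $t$, $\E_x[\zeta]\le \frac1c(\sup V-V(x))<\infty$, which gives $\zeta<\infty$ a.s.

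The only slightly delicate point is the martingale property of the stopped stochastic integral and the identity $\abs{X_{\zeta_n}}=n$ on $\{\zeta_n<\infty\}$. Both follow from continuity of paths and local boundedness of $\sigma\nabla V$ on $\bar{\B}_n$, so I expect no real obstacle.
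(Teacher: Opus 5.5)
Your proof is correct and follows the paper. Part (2) is the same Dynkin-formula argument on $[0,t\wedge\zeta_n]$ followed by monotone convergence, and your bound $\E_x[V(X_{t\wedge\zeta_n})]\le \sup_y V(y)$ gives exactly the stated constant; for part (1) the paper simply cites Khasminskii's Theorem~3.5, whose standard proof is your $e^{-Kt}V(X_t)$ supermartingale argument, and your preliminary shift to $V\ge 1$ (valid since $V$ is bounded below and $K>0$) fills the small gap that the cited criterion assumes $V\ge 0$ while the lemma does not.
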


\begin{proof}
    The assertion in~(1) follows from~\cite[Thm~3.5]{Khasminskii2012}. For~(2), 
    assume $\AA V \ge c >0$ on $\R^d$ for some bounded function $V$ and denote $\norm{V}_\infty\coloneqq \sup_{y \in \R^d} \abs{V(y)} <\infty$.
    For $x \in \B_n$ and $t>0$, applying Itô's formula on the stochastic interval  $[0,\zeta_n]$ gives 
    $$
    \norm{V}_\infty
    \ge 
    \E_x V(X_{t \wedge \zeta_n})
    = 
    V(x) + \E_x \int_0^{t \wedge \zeta_n} \AA V(X_s)\dd s
    \ge
    V(x)+c \E_x \bracket*{t \wedge \zeta_n}.
    $$
    Since 
    $\lim_{t \to \infty} \lim_{n \to \infty} (t \wedge \zeta_n)=\zeta$ a.s., we conclude by the monotone convergence theorem.
\end{proof}

To construct suitable Lyapunov functions to apply Lemma~\ref{lemma:explosion test}, we consider a radial function $U(x) = f(\abs{x})$ where $f \in C^2((0,\infty))$. 
Using \eqref{eq:generator on general functions}, one computes that, for $x \neq 0$,
$$
\AA U(x) = \pi(x)^{-2 \ell} \set*{f''(\abs{x}) + f'(\abs{x}) \psi(x)}
\mywhere \psi(x) = \frac{d-1}{\abs{x}} + (1-2\ell) \frac{\inner{\nabla \log \pi(x), x}}{\abs{x}}.
$$
Define
$$
\psi_+ (s) = \frac{d-1}{s} + \sup_{\abs{x}=s} (1-2\ell) \frac{\inner{\nabla \log \pi(x), x}}{s}, \qquad s>0,
$$
and let $\psi_-(s)$ denote the same expression with the supremum replaced by the infimum. 
If $f' \ge 0$, then 
$$
\pi(x)^{-2 \ell} \set*{f''(\abs{x}) + f'(\abs{x}) \psi_- (\abs{x})}
\le
\AA U(x)
\le 
\pi(x)^{-2 \ell} \set*{f''(\abs{x}) + f'(\abs{x}) \psi_+ (\abs{x})} 
$$
for $x \neq 0$. 
Set, for $t >0$ and $s \ge 0$, 
$$
\Psi_{\pm}(t) = \int_1^t \psi_{\pm}(s)\,\dd s 
\myand 
h(s) = \sup_{\abs{y}=s}\pi(y)^{2\ell}. 
$$

Let $\nu = d - 1 - (1-2\ell) (d+r)$ and then $s \psi_\pm (s) \to \nu$ as $s \to \infty$ by \eqref{AP}. 

\begin{proof}[Proof of Theorem~\ref{thm:TLD explosion} \textup{($\ell \in [0, \lstarstar)$)}]
    Following \cite[p.~1578]{FR2005}, consider $U_+: \R^d \to (0,\infty)$ in $C^2(\R^d)$ with 
    \begin{equation} \label{Lyapunov:non explosion}
        U_+(x) = f_+(\abs{x}) \for x \in \bar{\B}_1^c, 
        \mywhere
        f_+(\rho) = 1 + \int_1^\rho \exp \paren*{- \Psi_+ (s)} \dd s \for \rho \ge 1. 
        \end{equation}
    Since $f_+' (\rho) = \exp \paren*{- \Psi_+(\rho)} \ge 0$ and $f_+'' (\rho) = -\psi_+ (\rho) \exp \paren*{- \Psi_+(\rho)}$, we obtain
    $$
    \AA U_+(x) = \pi (x)^{-2\ell} \exp \paren*{- \Psi_+(\abs{x})} \bracket*{\psi(x) - \psi_+(\abs{x})} \le 0 
    \for 
    x \in \bar{\B}_1^c.
    $$
    Thus, there exists $K>0$ such that $\AA U_+ \le K U_+$ on $\R^d$, as $\AA U_+$ is continuous and $U_+$ is positive on the compact set $\bar{\B}_1$. 
    
    If $\ell \in [0, \lstarstar)$, equivalently $\nu < 1$, fix $\tilde{\nu} \in (\nu, 1)$; then there exists $s_0 > 1$ such that $s \psi_+(s) < \tilde{\nu}$ for $s \ge s_0$. 
    Consequently,
    $
    \Psi_+ (t) - \Psi_+ (s_0)
    \le \int_{s_0}^t \tilde{\nu} s^{-1} \dd s
    = \tilde{\nu} \log (t/s_0)
    $
    and for some constant $c>0$,
    $$
    f_+ (\rho)
    \ge \int_{s_0}^\rho \exp \paren*{- \Psi_+(s_0) - \tilde{\nu} \log \frac{s}{s_0}} \dd s
    = c \int_{s_0}^\rho s^{-\tilde{\nu}} \dd s. 
    $$
    Since $\tilde{\nu} < 1$, we obtain $\lim_{\rho \to \infty} \inf_{\abs{x} = \rho} U_+(x) = \lim_{\rho \to \infty} f_+ (\rho) = \infty$. 

    By Lemma~\ref{lemma:explosion test} (1), we conclude that $X$ is non-explosive if $\ell \in \left[0, \lstarstar\right)$. 
\end{proof}

\begin{remark} \label{remark:Khasminskii fails at lstarstar}
    When $\ell = \lstarstar$ and $d \ge 3$, \eqref{AP} alone does not guarantee that $\lim_{\rho \to \infty} \inf_{\abs{x} = \rho} U_+(x) = \infty$. 
    To see this, set $\vartheta(s) = \sup_{\abs{x} = s} \set*{ (1-2\ell) \paren*{\inner{\nabla \log \pi (x), x} + (d+r)}}$ for $s \ge 1$, so that $\Psi_+ (t) = \log t + \int_1^t \vartheta(s) s^{-1} \dd s$.
    For instance, if $\vartheta(s) = 2 / \log s$ for $s \ge e$, then $\Psi_+(t) = \Psi_+(e) - 1 + \log t + 2 \log \log t$ for $t \ge e$, and hence $f_+(\rho) = f_+(e) + e^{1 - \Psi_+(e)} \int_e^\rho t^{-1} (\log t)^{-2} \dd t \le f_+(e) + e^{1 - \Psi_+(e)}$ for all $\rho \ge e$, so $f_+$ stays bounded.
    To conclude that $\lim_{\rho \to \infty} \inf_{\abs{x} = \rho} U_+(x) = \infty$, we therefore need to assume, in addition to~\eqref{AP}, that $\vartheta(s)$ converges to $0$ sufficiently fast as $s \to \infty$.
    If $\vartheta(s)$ is integrable with respect to $s^{-1} \dd s$, then there exists $C>0$ such that $\Psi_+ (t) \le \log t + C$ for $t \ge 1$.
    Thus, since $f_+(\rho) \ge e^{-C}\int_1^\rho t^{-1} \dd t=e^{-C}\log \rho$, we have $\inf_{\abs{x} = \rho} U_+(x) = f_+ (\rho) \to \infty$ as $\rho \to \infty$. 
\end{remark}

\begin{proof}[Proof of Theorem~\ref{thm:TLD explosion} \textup{($\ell \in (\lstarstar, \infty)$)}]
    Assume $\ell \in (\lstarstar, \infty)$, equivalently $\nu > 1$.
    Define
    $$
    \varphi(\rho) = e^{- \Psi_- (\rho)} \int_0^\rho e^{\Psi_- (t)} h(t) \dd t
    \myand
    f_-(\rho) = \int_0^\rho \varphi(t) \dd t,
    $$
    for $\rho > 0$ and set $U_-(x) = f_- (\abs{x})$ for $x \neq 0$.

    \smallskip
    \noindent  
    \underline{$U_-$ extends to $C^2 ( \R^d )$.}
    For $\rho \in (0,\infty)$, one computes $\varphi'(\rho) = - \psi_-(\rho) \varphi(\rho) + h(\rho)$.
    Since $\nabla \log \pi$ is continuous, $\psi_- (\rho) = \frac{d-1}{\rho} + O(1)$ as $\rho \downarrow 0$, so $\xi(\rho) = \int_1^\rho \paren*{\psi_-(s) - \frac{d-1}{s}} \dd s$ extends to a continuous function $\xi: [0,1] \to \R$ and $e^{\Psi_- (\rho)} = \rho^{d-1} e^{\xi (\rho)}$ for $\rho \in (0,1]$.
    Since 
    \begin{align*}
        \varphi(\rho)
        = \rho^{1-d} e^{- \xi (\rho)} \int_0^\rho t^{d-1} e^{\xi(t)} h(t) \dd t
        \sim \rho^{1-d} e^{- \xi (0)} e^{\xi (0)} h(0) \frac{\rho^d}{d}
        \sim h(0) \frac{\rho}{d}
    \end{align*}
    as $\rho \downarrow 0$, it follows that $\varphi'(\rho) = - \paren{\frac{d-1}{\rho} + O(1)} h(0) \frac{\rho}{d} (1 + o(1))  + h(\rho) \to \frac{h(0)}{d}$ as $\rho \downarrow 0$. 
    Therefore, $f_-$ extends to $C^2 ([0, \infty))$ with 
    \begin{equation} \label{eq:f_- at zero}
        f_-(0) = 0, 
        \quad
        f_-'(0) = \varphi(0) = 0, 
        \myand
        f_-''(0) = \varphi'(0) = h(0)/d.
    \end{equation}
    Since $f_- \in C^2 ([0, \infty))$ and $f_-'(0) = 0$, we conclude via \eqref{eq:f_- at zero} that $U_- \in C^2(\R^d)$ with $U_-(0)=0$, $\nabla U_-(0) = 0$ and $\nabla^2 U_-(0) = (h(0)/d) \imatrix_d$.

    \smallskip
    \noindent
    \underline{$U_-$ is bounded.}
    Since $t \psi_-(t) \to \nu$ and $h(t) \sim t^{- 2 \ell (d+r)}$ as $t \to \infty$, we have
    $e^{\Psi_- (t)}  h(t) = t^{-r-1+ o(1)}$ as $t \to \infty$.
    Together with $e^{\Psi_- (t)}  h(t) \sim e^{\xi(0)} h(0) t^{d-1}$ as $t \downarrow 0$, we have $I_\infty = \int_0^\infty e^{\Psi_- (t)}  h(t) \dd t < \infty$ as $r>0$.
    By the same argument as in the case $\ell \in [0, \lstarstar)$, for any fixed $\tilde{\nu} \in (1, \nu)$ there exist $t_0>0$ and $c>0$ such that $e^{- \Psi_- (t)} \le c t^{- \tilde{\nu}}$ for $t \ge t_0$.
    Hence, for any $\rho > t_0$,
    $$
    f_- (\rho) \le f_- (t_0) + I_\infty \int_{t_0}^{\infty} c t^{- \tilde{\nu}} \dd t < \infty
    $$
    since $\tilde{\nu} > 1$.
    We thus conclude that $f_-$ is bounded, and so is $U_-$.

    \smallskip
    \noindent
    \underline{$\AA U_- \ge 1$.} 
    Since $f_-' (\rho) = \varphi (\rho)$ and $f_-'' (\rho) = h(\rho) - \psi_- (\rho) \varphi(\rho)$ for $\rho >0$, we have
    $$
    \AA U_- (x)
    = \pi(x)^{-2\ell} h(\abs{x}) + \pi(x)^{-2\ell} \varphi(\abs{x}) \bracket*{\psi (x) - \psi_- (\abs{x})}
    \ge \pi(x)^{-2\ell} h(\abs{x})
    \ge 1
    $$
    for $x \neq 0$.
    At the origin, $\AA U_-(0) = \pi(0)^{-2\ell}\Delta U_-(0) = \pi(0)^{-2\ell}h(0) = 1$.

    \smallskip
    By Lemma~\ref{lemma:explosion test} (2), we conclude that $\zeta < \infty$ a.s.\ when $\ell \in (\lstarstar, \infty)$. 
\end{proof}

\subsection{Non-explosion at \texorpdfstring{$\ell = \lstarstar$}{ℓ = ℓ**}}\label{subsec:explosion at lstarstar}

As discussed in Section~\ref{subsubsec:critical temperature thresholds} and Remark~\ref{remark:Khasminskii fails at lstarstar}, proving that $X$ is non-explosive at $\ell = \lstarstar$ requires a more delicate argument.
We bound the event that $X$ exits a large ball by time $T$ via a logarithmic cutoff function, which equals $1$ near the origin and vanishes outside the ball.
The main difficulty is then to control the largest increment of this function along the path of $X$ up to time $T$.
To this end, we construct a diffusion on a closed ball that behaves like $X$ in the interior and is reflected at the boundary.
As the extended generator of $X$ is in divergence form~\eqref{eq:generator on general functions}, this reflected diffusion admits the restriction of $\Pi$ to the ball, after normalisation, as its stationary measure and is reversible with respect to it.
By the Itô trick~\cite{LyonsZhang1994}, controlling the increment along the reflected diffusion then reduces to bounding the quadratic variations of two martingales, one for the process and one for its time reversal.

\begin{proof}[Proof of Theorem~\ref{thm:TLD explosion} \textup{($\ell = \lstarstar$)}]
    Fix $R_0 >0$, let $R > R_0$, and set $D = \B_{2R}$. 

    \smallskip
    \noindent
    \underline{Reflected diffusions.}
    Let $\mathfrak{n}$ be the inward unit normal vector to $\partial D$. 
    Consider the processes $(Y,L)$ with state spaces $\bar{D}$ and $\R_+$ satisfying the SDE 
    \begin{equation} \label{eq:reflected diffusion SDE}
        Y_t = x + \int_{0}^{t} b(Y_s) \dd s + \int_{0}^{t} \sigma (Y_s) \dd B_s + \int_{0}^{t} 2 a(Y_s) \mathfrak{n}(Y_s) \dd L_s 
        \myand 
        L_t = \int_0^t \1{Y_s \in \partial D} \dd L_s,
        \quad 
        t \ge 0,
    \end{equation}
    where $x \in D$, $a$ is defined in \eqref{eq:generator on general functions} and $L$ denotes the local time of $Y$ on $\partial D$. 
    Since $b$ and $\sigma$ are Lipschitz on $D$ and $a$ is bounded and uniformly elliptic on $D$, the SDE in~\eqref{eq:reflected diffusion SDE} has a unique strong solution for any $x \in D$ by \cite[Thm~4.4]{LionsSznitman1984}. 
    Thus, $Y$ is the diffusion in $D$ whose generator $\LL$ agrees with $\AA$ on $C_c^\infty(D)$, reflected at $\partial D$ in the conormal direction $2 a \mathfrak{n}$~\cite{PardouxWilliams1994}. 

    Let $\Pi_R$ be the probability measure defined by
    $$
    \Pi_R (\dd x) = \frac{\1{x \in D} \pi(x) \dd x}{Z_R}
    \myand
    Z_R = \int_D \pi(x) \dd x.
    $$
    As discussed in \cite[pp.~15--16]{PardouxWilliams1994}, since $\pi$ is positive on $\R^d$, the divergence form of the generator and the normal reflection at the boundary imply that $\Pi_R$ is the stationary measure of $Y$ and $Y$ is $\Pi_R$-reversible.
    Moreover, up to the first hitting time of~$\partial D$, $Y$ has the same law as $X$.

    \smallskip
    \noindent
    \underline{Itô trick.}
    Let $u \in C^\infty (\R^d)$ be a function that is constant in a neighbourhood of $\partial D$.
    Define
    $$
    M_t = u(Y_t) - u(Y_0) - \int_0^t \LL u(Y_s) \dd s
    \for
    t \ge 0.
    $$
    Then, Itô's formula shows that $(M_t)_{t \ge 0}$ is a square-integrable martingale under $\P_{\Pi_R}$ w.r.t.\ the natural filtration of $(Y_t)_{t \ge 0}$ and
    \begin{equation} \label{eq:reflected MG second moment}
        \E_{\Pi_R} \bracket*{M_t^2} = 2t \int \pi^{-2\ell} \abs{\nabla u}^2 \dd \Pi_R.
    \end{equation}

    Fix $T>0$.
    Define $\widehat Y_t = Y_{T-t}$ for $t \in [0,T]$.
    By reversibility, under $\P_{\Pi_R}$ the time-reversed process $\paren{\widehat Y_t}_{t \in [0,T]}$ has the same law as $\paren{Y_t}_{t \in [0,T]}$.
    Let $\widehat M$ be the corresponding martingale part of $u(\widehat{Y})$ w.r.t.\ the natural filtration of $(\widehat Y_t)_{t \in [0,T]}$.
    Applying the \textit{Itô trick} \cite{LyonsZhang1994} gives, for $0\le t\le T$, 
    \begin{align*}
        u(Y_t) - u(Y_0)
        & = 
        \frac12 \paren*{u(Y_t)-u( Y_0)-\int_0^t \LL u(Y_s)\,ds} - \frac12 \paren*{u(Y_0)-u(Y_t) - \int_{0}^t \LL u( Y_s)\,ds} 
        \\
        & = 
        \frac12 \paren*{u(Y_t)-u( Y_0)-\int_0^t \LL u(Y_s)\,ds} - \frac12 \paren*{u(\widehat Y_{T})-u(\widehat Y_{T-t})-\int_{T-t}^T\LL u(\widehat Y_s)\,ds} 
        \\
        & = 
        \frac12 M_t- \frac 12 \paren*{\widehat M_T-\widehat M_{T-t}}. 
    \end{align*}
    Then, applying Doob's martingale inequality to $M$ and $\widehat M$ and using \eqref{eq:reflected MG second moment}, we obtain
    \begin{equation} \label{eq:bound the largest increment of u}
        \E_{\Pi_R} \bigg[ \sup_{t \in [0,T]} \abs{u(Y_t)-u(Y_0)}^2 \bigg]
        \le C_1 T \int \pi^{-2\ell} \abs{\nabla u}^2 \dd \Pi_R
    \end{equation}
    for some constant $C_1 > 0$.

    \smallskip
    \noindent
    \underline{Logarithmic cutoff functions.}
    Let $\chi: \R \to [0,1]$ be a smooth function such that $\chi = 1$ on $(-\infty, 0]$ and $\chi = 0$ on $[1, \infty)$.
    Define
    $$
    u_R (x) = \chi \paren*{\frac{\log (\abs{x} / R_0)}{\log (R/R_0)}}
    \for
    x \neq 0,
    $$
    and extend it by $1$ near the origin.
    Then, $u_R= 1$ on $\B_{R_0}$, $u_R= 0$ on $\B_{R}^c$, and, for some constant $C_2 >0$,
    \begin{equation} \label{eq:bound gradient log cutoff}
        \abs{\nabla u_R (x)} \le \frac{C_2}{\abs{x} \log (R / R_0)} \1{R_0 < \abs{x} < R}.
    \end{equation}
    By \eqref{AP}, there is a constant $C_3>0$ such that $\pi(x)^{1-2\ell} \le C_3 \abs{x}^{2-d}$ on $\B_{R_0}^c$.
    Combining this with \eqref{eq:bound the largest increment of u}  and \eqref{eq:bound gradient log cutoff}, we obtain
    $$
    \E_{\Pi_R} \bracket*{\sup_{t \in [0,T]} \abs{u_R (Y_t)-u_R (Y_0)}^2}
    \le \frac{C_4 T}{Z_R \paren*{\log (R / R_0)}^2} \int_{R_0}^R \rho^{2 - d} \rho^{-2} \rho^{d-1} \dd \rho
    = \frac{C_4 T}{Z_R \log (R / R_0)}
    $$
    for some constant $C_4 > 0$ independent of $R$. 

    \smallskip
    \noindent
    \underline{Bounding exit probabilities.}
    Let $\zeta_R = \inf \set*{t>0: \abs{X_t} > R}$.
    Since $Y$ has the same law as $X$ before the first hitting time of~$\partial D$, and since $\sup_{t \in [0,T]} \abs{u_R(Y_t) - u_R(Y_0)} = 1$ whenever $Y$ starts inside $\B_{R_0}$ and leaves $\B_{R}$ by time $T$, we have
    $$
    \int_{\B_{R_0}} \P_x \paren*{\zeta_R \le T} \pi(x) \dd x
    \le Z_R \, \E_{\Pi_R} \bracket*{\sup_{t \in [0,T]} \abs{u_R(Y_t)-u_R(Y_0)}^2}
    \le \frac{C_4 T}{\log (R / R_0)}.
    $$
    Since $\zeta_R \uparrow \zeta$ as $R \to \infty$ $\P_x$-almost surely for every $x \in \R^d$, letting $R \to \infty$ and applying the dominated convergence theorem yields $\int_{\B_{R_0}} \P_x \paren*{\zeta \le T} \pi(x) \dd x = 0$. 
    Letting $R_0 \to \infty$ and $T \to \infty$, the monotone convergence theorem gives $\int_{\R^d} \P_x \paren*{\zeta < \infty} \pi(x) \dd x = 0$, which implies $\P_x (\zeta < \infty) =0$ for Lebesgue almost every $x \in \R^d$.

    It remains to extend this to every $x \in \R^d$.
    Since $\pi \in C^2(\R^d)$ has locally Hölder continuous second-order derivatives, $X$ admits a transition density before explosion by \cite[Thm~1.1]{Kent1978}.
    Therefore, for any $T>0$ and $\eps \in (0,T)$, by the Markov property at time $\eps$, we have
    $$
    \P_x (\eps < \zeta \le T)
    = \E_x \bracket*{\1{\eps < \zeta} \P_{X_\eps} (\zeta \le T - \eps)}
    = 0.
    $$
    As $\zeta >0$ $\P_x$-almost surely, letting $\eps \downarrow 0$ and applying the monotone convergence theorem gives that ${\P_x (\zeta \le T) = 0}$.
    Since $T>0$ is arbitrary, we conclude $\P_x (\zeta < \infty) = 0$, which completes the proof.
\end{proof}

\section{Moment thresholds} \label{sec:proof moments}

In this section, we establish the moments properties presented in Theorem~\ref{thm:TLD moments}. 
Many of the Lyapunov functions used in our proofs are built from the following function, which agrees with a power of the norm outside a neighbourhood of the origin. 
Let $\gamma \in\R\setminus\{0\}$ and $\VV_\gamma: \R^d \to (0,\infty)$ satisfy $\VV_\gamma\in C^2(\R^d)$ and $\VV_\gamma (x) = \abs{x}^{\gamma}$ for $x \in \bar{\B}_2^c$. 
If $\gamma > 0$, we further assume that $\VV_\gamma: \R^d \to [1,\infty)$. 
Recall that $\alpha = \ell (d+r) - 1$, we then have 
\begin{equation} \label{eq:generator on polynomial}
    \AA \VV_\gamma(x) = \theta_\ell(x)^2 \gamma \paren*{\gamma - (r - 2\alpha) + \eps_\ell(x)}  \abs{x}^{\gamma + 2 \alpha} \for x \in \bar{\B}_2^c,
\end{equation}
where $\theta_\ell(x) := \paren{\abs{x}^{d+r} \pi(x)}^{- \ell}$ and $\eps_\ell(x) := (1-2\ell) \paren*{\inner{\nabla \log \pi (x), x} + (d+r)}$ for $x \neq 0$.
Because $c_\pi = 1$, \eqref{AP} gives 
\begin{equation} \label{eq:generator poly error terms}
    \theta_\ell(x) \to 1 \myand \eps_\ell(x) \to 0 \as \abs{x} \to \infty.
\end{equation}

We present the moment thresholds for $\ell \in [0, \lstar]$ and $\ell \in (\lstar, \lstarstar]$ in Sections~\ref{subsec:moments below lstar} and~\ref{subsec:moments Bessel}, respectively. 

\subsection{Moments for \texorpdfstring{$\ell \in [0, \lstar]$}{ℓ ∈ [0, ℓ*]}} \label{subsec:moments below lstar}
We apply the same spatial transformation as in Section~\ref{subsubsec:heuristics moments}. 
For a general $\pi$ satisfying \eqref{AP}, rather than an exactly polynomially decaying one, the transformed process is no longer  dominated by a Brownian motion on $\R$ with negative drift but a time-changed one. 
This path domination lets us deduce the moment properties from the exponential moments of a Brownian motion.
Recall that $\alpha = \ell (d+r) - 1 \in [-1,0]$ when $\ell \in [0, \lstar]$. 

\begin{proof}[Proof of Theorem~\ref{thm:TLD moments} (1) and (2)]
    Consider a function $\Phi: \R^d \to (0,\infty)$ such that $\Phi \in C^2 (\R^d)$ and $\Phi(x) = f_\ell (\abs{x})$ for $x \in \bar{\B}_2^c$, where
    \begin{equation*}
        f_\ell(\rho)
        =
        \begin{cases}
            {\rho^{-\alpha}} / \paren*{\sqrt{2} \abs{\alpha}}, &\myif \ell \in [0, \lstar), \\
            \log \rho / \sqrt{2}, &\myif \ell = \lstar,
        \end{cases}
    \end{equation*}
    for $\rho \ge 2$.
    Note that $f_{\ell}'(\rho) \to f_{\lstar}'(\rho)$ and $f_{\ell}'' (\rho) \to f_{\lstar}'' (\rho)$ as $\ell \uparrow \lstar$ for $\rho \ge 2$. 
    Define
    $$
    M_t = \Phi(X_t) - \Phi(x) - \int_{0}^{t} \AA \Phi (X_s) \dd s
    \myand
    \inner{M}_t = \int_{0}^{t} 2 \pi(X_s)^{-2\ell} \abs{\nabla \Phi(X_s)}^2 \dd s.
    $$
    Itô's formula shows that $(M_t)_{t \ge 0}$ is a continuous $\P_x$-local martingale with quadratic variation $\paren*{\inner{M}_t}_{t \ge 0}$.
    A direct computation shows that, for $x \in \bar{\B}_2^c$,
    $$
    \AA \Phi (x)
    =
    - \frac{\theta_\ell(x)^2 \paren*{r - \alpha - \eps_\ell(x)}}{\sqrt{2}}  \abs{x}^{\alpha}
    \myand
    2\pi(x)^{-2\ell}\abs{\nabla \Phi(x)}^2 = \theta_\ell(x)^2.
    $$
    Together with \eqref{eq:generator poly error terms}, this shows that $K = \sup_{y \in \R^d} \abs{\AA \Phi(y)}$ and $K' = \sup_{y \in \R^d} 2 \pi(y)^{-2\ell} \abs{\nabla \Phi(y)}^2$ are both finite, and moreover $K' \ge 1$.
    By the Dambis-Dubins-Schwarz theorem (see e.g.\ \cite[Chap.~V, Thm~1.7]{RevuzYor1999}), on a possibly enlarged probability space, there exists a standard one-dimensional Brownian motion $W$ such that $M_t = W_{\inner{M}_t}$ for every $t \ge 0$. 
    Consequently,
    \begin{equation}\label{eq:bound by BM}
        \Phi(x) - K t + W_{\inner{M}_t}
        \le
        \Phi(X_t)
        \le 
        \Phi(x) + K t + W_{\inner{M}_t}
        \myforall
        t \ge 0
    \end{equation}
    holds $\P_x$-almost surely. 

    \smallskip
    \noindent
    \underline{Finite moments.}
    Since $\inner{M}_t \le K' t$, setting $S_t = \sup_{s \in [0,K' t]} W_s$ and applying \eqref{eq:bound by BM} gives
    $$
    \Phi(X_t) \le \Phi(x) + K t + S_t \quad \text{$\P_x$-almost surely}.
    $$
    For any $\lambda > 0$, we have $\E \bracket{\exp \paren{\lambda S_t^{\kappa}}} < \infty$ for every $\kappa \in [0,2)$ and $t \ge 0$, and also for $\kappa = 2$ provided $t < \frac{1}{2\lambda K'}$. 
    When $\ell \in [0, \lstar)$, we set $\tilde \kappa = p/\abs{\alpha}$ and $\tilde \lambda=(\sqrt2\abs{\alpha})^{\tilde \kappa}$. 
    Then, 
    \begin{align*}
        \E_x \bracket*{e^{\abs{X_t}^p }}
        \le e^{2^p} + \E_x \bracket*{e^{\tilde \lambda \Phi(X_t)^{\tilde \kappa}}} 
        \le e^{2^p} + e^{\tilde \lambda 2^{\tilde \kappa} (\Phi(x)+Kt)^{\tilde \kappa}} \E \bracket*{e^{\tilde \lambda 2^{\tilde \kappa} S_t^{\tilde \kappa}}},
    \end{align*}
    where the second inequality uses $(A+B)^{\tilde \kappa}\le2^{\tilde \kappa}(A^{\tilde \kappa}+B^{\tilde \kappa})$ for $A, B \ge 0$.
    Thus, for $t \ge 0$
    $$
    \E_x \bracket*{\exp \paren{\abs{X_t}^p }} 
    \begin{cases}
        < \infty, & \myif p \in [0,2 \abs{\alpha}), \\
        < \infty, & \myif p =2 \abs{\alpha} \myand t < {1}/ \paren*{16 \abs{\alpha}^2 K'}. 
    \end{cases}
    $$
    Similarly, when $\ell = \lstar$, we have for any $p \ge 0$, 
    $$
    \E_x \bracket*{ \abs{X_t}^p }
    \le
    2^p + \E_x \bracket*{ e^{\sqrt{2} p \Phi(X_t)} } 
    \le 2^p + e^{\sqrt{2} p (\Phi(x) + Kt)} \E \bracket*{ e^{\sqrt{2} p S_t} } 
    < \infty. 
    $$
    These justify the finite moments assertions in Theorem~\ref{thm:TLD moments} (1) and (2). 

    \smallskip
    \noindent
    \underline{Infinite moments.}
    Let $\rho_0 \ge 2$ be such that $f_\ell(\rho_0) \ge \sup_{\abs{y} \le 2} \Phi(y)$. 
    Fix $t >0$ and $\rho > \rho_0$.
    By \eqref{eq:generator poly error terms}, there is a constant $R > 2$ with $\frac{1}{2} \le \theta_\ell^2 \le \frac{3}{2}$ on $\bar{\B}_{R}^c$.
    Set $T_1 = t/2$, $T_2 = 3t/2$, and
    $$
    R_1 = R_1(t) = 
    \begin{cases}
        \paren*{ R^{\abs{\alpha}} + \sqrt{2} \abs{\alpha} (1+K t)}^{{1}/{\abs{\alpha}}}, & \myif \ell \in [0, \lstar), \\
        R e^{\sqrt{2} (K t + 1)}, & \myif \ell = \lstar,
    \end{cases}
    $$
    so that $f_\ell(R_1) = f_\ell(R) + Kt + 1$.
    Let $x \in \bar{\B}_{R_1}^c$ and $G= G_1 \cap G_2$, where
    $$
    G_1 = \set*{\inf_{s \le T_2} W_s > -1}
    \myand
    G_2 = \set*{\inf_{s \in [T_1, T_2]} W_s \ge f_\ell(\rho) + K t}.
    $$
    We claim that $G \subset \set{\abs{X_t} \ge \rho}$.
    Let $\tau := \inf \set{s: \abs{X_s} \le R}$, and suppose toward a contradiction that $\tau \le t$ on $G$.
    By \eqref{eq:bound by BM} and the choice of $R_1$,
    \begin{align*}
        \inf_{s \le T_2} W_s
        \le W_{\inner{M}_\tau}
        \le f_\ell(R) - f_\ell(R_1) + K \tau
        = K(\tau - t) - 1
        \le -1,
    \end{align*}
    which contradicts $G_1$.
    Hence $\tau > t$ on $G$, so that $\inner{M}_t \in [T_1, T_2]$ on $G$, which forces
    $$
    \Phi(X_t)
    \ge \Phi(x) - Kt + W_{\inner{M}_t}
    \ge - Kt + f_\ell(\rho) + K t
    = f_\ell(\rho).
    $$
    Since $f_\ell(\rho) > f_\ell(\rho_0) \ge \sup_{\abs{y} \le 2} \Phi(y)$, this forces $\abs{X_t} > 2$, so that $f_\ell (\abs{X_t}) = \Phi(X_t) \ge f_\ell(\rho)$ and hence $\abs{X_t} \ge \rho$ by the monotonicity of $f_\ell$, which proves the claim.
    With $a = f_\ell(\rho) + K t + 1$, observe that
    $$
    \set*{\inf_{s \le T_1} W_s > -1, W_{T_1} \ge a, \inf_{s \in [T_1, T_2]} W_s \ge W_{T_1} - 1}
    \subset G.
    $$
    Define $C_1 (t) = \frac{2}{\sqrt{\pi t}} e^{- (Kt+3)^2/t} \, \P \paren*{\inf_{s \le t} W_s \ge - 1}$, $C_2 (t) = 1/t$ and $C_3 (t) = 2(Kt+3)/t$. 
    Applying the Markov property together with the reflection principle gives 
    \begin{equation} \label{eq:uniform lower bound on tail}
        \begin{aligned}
            \inf_{x \in \bar{\B}_{R_1 (t)}^c} \P_x (\abs{X_t} \ge \rho)
            &\ge \P \big( \inf_{s \le T_1} W_s > -1, W_{T_1} \ge a \big) \P \big( \inf_{s \le t} W_s \ge - 1 \big) \\
            &= \P \paren*{ W_{T_1} \in \left[a,a+2\right)} \P \big( \inf_{s \le t} W_s \ge - 1 \big)\\
            &\ge C_1(t) \exp \paren*{-C_2(t) f_\ell (\rho)^2 - C_3(t) f_\ell (\rho)},
        \end{aligned}
    \end{equation}
    where the final inequality comes from $\P \paren*{ W_{T_1} \in [a, a+2)} \ge \frac{2}{\sqrt{2 \pi T_1}} \exp \paren*{-\frac{(a+2)^2}{2 T_1}}$. 

    For any $x \in \R^d$ and $t>0$, set $s = t/2$ and $\tilde{C}_1(s,x) = C_1(s) \P_x \paren*{X_{s} \in \bar{\B}_{R_1(s)}^c}>0$.
    Applying the Markov property at time $s$ and \eqref{eq:uniform lower bound on tail} gives
    \begin{align*}
            \P_x (\abs{X_t} \ge \rho)
        \ge \E_x \bracket*{ \1{X_{s} \in \bar{\B}_{R_1(s)}^c} \P_{X_{s}} \paren*{ \abs{X_{s}} \ge \rho }}
        \ge \tilde{C}_1(s,x) \exp \paren*{-C_2(s) f_\ell (\rho)^2 - C_3(s) f_\ell (\rho)}
    \end{align*}
    for $\rho > \rho_0$. 
    When $\ell \in [0, \lstar)$, we have for every $p > 2 \abs{\alpha}$ or $p = 2 \abs{\alpha}$ with $t > 1/\abs{\alpha}^2$,
    \begin{align*}
    \E_x \bracket*{e^{ \abs{X_t}^p}}
    = 
    1 + \int_0^\infty e^{\rho} \, \P_x (\abs{X_t}^p \ge \rho) \dd \rho 
    \ge 
    \tilde C_1 \paren*{s,x} \int_{\rho_0^p}^\infty \exp \paren*{\rho - \frac{1}{\abs{\alpha}^2 t} \rho^{\frac{2\abs{\alpha}}{p}} - \frac{C_3(s)}{\sqrt{2} \abs{\alpha}} \rho^{\frac{\abs{\alpha}}{p}}} \dd \rho
    = \infty. 
    \end{align*}
    Similarly, when $\ell = \lstar$, we have for any $p>0$,
    $$
    \E_x \bracket*{e^{\abs{X_t}^p}}
    \ge 
    \tilde C_1 \paren*{s,x} \int_{\rho_0^p}^\infty \exp \paren*{\rho - \frac{C_2(s)}{2 p^2} (\log \rho)^2 - \frac{C_3(s)}{\sqrt{2} p} \log \rho} \dd \rho
    = \infty. 
    $$
    The proof of Theorem~\ref{thm:TLD moments} (1) and (2) is now complete. 
\end{proof}

\subsection{Moments for \texorpdfstring{$\ell \in (\lstar, \lstarstar]$}{ℓ ∈ (ℓ*, ℓ**]}} \label{subsec:moments Bessel}

As in Section~\ref{subsubsec:heuristics moments}, we work with a function $\Phi: \R^d \to (0,\infty)$ satisfying $\Phi \in C^2(\R^d)$ and $\Phi(x) = \frac{1}{\sqrt{2} \alpha} \abs{x}^{-\alpha}$ for $x \in \bar{\B}_2^c$. 
We claim that for general $\pi$ satisfying~\eqref{AP}, $\Phi(X)$ behaves approximately like a Bessel process of dimension $\deltastar = {r}/{\alpha} \in [2, \infty)$. 
To justify this, we compute the action of the extended generator of $X$ on $\varphi \circ \Phi$ for any $\varphi \in C^2(\R_+)$, obtaining
\begin{equation} \label{eq:generator almost Bessel}
    \AA \paren*{\varphi \circ \Phi} (x)
    = \theta_\ell(x)^2 \bracket*{\frac{\delta_\ell (x) - 1}{2 z} \varphi'(z) + \frac{1}{2} \varphi''(z)}
    \for
    \abs{x} > 2,
\end{equation}
where $\theta_\ell(x) = \paren{\abs{x}^{d+r} \pi(x)}^{- \ell}$, $\delta_\ell (x) := \paren*{r - \eps_\ell (x)}/ \alpha$ and $z = \Phi(x)$.
Since \eqref{AP} gives
\begin{equation} \label{eq:generator almost Bessel error}
    \theta_\ell(x) \to 1
    \myand
    \delta_\ell(x) \to \deltastar
    \as \abs{x} \to \infty,
\end{equation}
this generator identity implies that $\Phi(X)$ evolves like a Bessel process of dimension $\deltastar$ when $X$ is far from the origin. 

Combined with the identity, for $p \in [0,r)$,
\begin{equation} \label{eq:equivalence Bessel neg moments}
    \E_x \bracket*{\abs{X_t}^p \1{\abs{X_t} > 2}}
    = \paren{\sqrt{2} \alpha}^{- \nu} \E_x \bracket*{\Phi(X_t)^{-\nu} \1{\abs{X_t} > 2}},
    \quad
    \nu = p/\alpha \in [0, \deltastar),
\end{equation}
we attempt to deduce $\E_x \bracket*{\abs{X_t}^p} < \infty \iff p \in [0,r)$ from the fact that a Bessel process has finite negative moments only of degree less than its dimension.
However, unlike in the case $\ell \in [0, \lstar]$, here it is not enough to view $\Phi(X)$ as a time-changed Bessel process to directly deduce its moment properties. 
This is because a Bessel process's infimum does not have the same negative moments as the process itself.
Let $Z$ be a Bessel process with dimension $\delta>0$ and $Z_0 = z \ge 0$. 
By \cite[Chap.~XI, Cor.~1.4]{RevuzYor1999} and \cite[Part II, Chap.~4, (1.2.2)]{BorodinSalminen2002}, we have: for $\delta > 2$ and $z>0$,
\begin{equation} \label{eq:Bessel lose moments}
    \E \bracket{Z_t^{-\eta}} < \infty \iff \eta \in [0,\delta),
    \quad \text{but} \quad
    \E \bracket{\paren{\inf_{s \in [0,t]} Z_s}^{-\eta}} < \infty \iff \eta \in [0,\delta-2). 
\end{equation}
By \eqref{eq:equivalence Bessel neg moments}, this means that the path-domination argument used for $\ell \in [0, \lstar]$ only yields finite moments for $p \in [0, r - 2 \alpha)$ when $\ell \in (\lstar, \lstarstar)$.

Alternatively, one can apply a version of Grönwall's inequality for expectations (see e.g.\ \cite[Lemma~2.10]{BM2026}), for which we seek a bound of the form $\AA \VV_p \le c \VV_p$ on $\R^d$ for some $c \ge 0$.
By \eqref{eq:generator on polynomial},
$$
\AA \VV_p(x) = p \paren*{p - (r - 2\alpha) + \eps_\ell(x)} \theta_\ell(x)^2 \abs{x}^{p + 2 \alpha} \for x \in \bar{\B}_2^c.
$$
For $\ell \in (\lstar, \lstarstar]$, we have 
\(\alpha=\ell(d+r)-1>0\)
 and $p - (r - 2\alpha) + \eps_\ell(x) < 0$ for $x$ large enough only when $p \in (0, r - 2\alpha)$.
Consequently, a bound of the form $\AA \VV_p \le c \VV_p$ fails to hold for $p \in (r - 2\alpha, r)$, and a different Lyapunov function is required.

To resolve this issue, we build the Lyapunov function directly out of the closed-form expression of the negative moments of Bessel processes, as we motivated in Section~\ref{subsec:heuristics Lyapunov function}. 
Let $M$ denote \textit{Kummer's confluent hypergeometric function} (see e.g.\ \cite[Sec.~13]{AbramowitzStegun1972}), given by
$$
M(a,b,z) = \sum_{n=0}^{\infty} \frac{(a)_n z^n}{(b)_n n!}
\for 0 \le a < b
\myand 
z \in \R,
$$
with $(a)_0 = 1$ and $(a)_n = \prod_{k=0}^{n-1}(a+k)$ for $n \ge 1$.
Since $Z_s^2/s$ follows, for each $s>0$, the non-central chi-squared distribution with $\delta$ degrees of freedom and non-centrality parameter $\lambda = z^2/s$ \cite[Chap.~XI, Cor.~1.4]{RevuzYor1999}, computing its negative moments and applying Kummer's transformation \cite[(13.1.27)]{AbramowitzStegun1972} gives, for any $\eta \in (0,\delta)$ and $(s,z) \in (0,\infty) \times [0,\infty)$,
\begin{equation} \label{eq:closed forms of Bessel negative moments}
    U_{\delta, \eta}(s,z) 
    := \E \bracket*{Z_s^{-\eta}}
    = s^{-\eta/2} \sum_{n=0}^{\infty} e^{-\lambda/2} \frac{(\lambda/2)^n}{n!} \frac{\Gamma \paren*{\frac{\delta-\eta}{2} + n}}{2^{\eta/2} \Gamma \paren*{\frac{\delta}{2} + n}}
    = K_{\delta, \eta} s^{-{\eta}/{2}} M \paren*{{\eta}/{2}, {\delta}/{2}, -{z^2}/ \paren{2s}},   
\end{equation}
where $K_{\delta, \eta} = \frac{\Gamma \paren*{\paren{\delta - \eta}/{2}}}{2^{\eta / 2} \Gamma \paren*{\delta/2}}$. 
The following lemma collects the properties of $U_{\delta, \eta}$ that we will need and we defer its proof to Appendix~\ref{appendix:proof negative moments of Bessel processes}.

\begin{lemma} \label{lemma:properties of Bessel neg moments}
    For any $0 < \eta < \delta$, 
    \begin{enumerate}
        \itemsep 5pt
        \item there exist constants $0 < c_{\delta, \eta} \le C_{\delta, \eta} < \infty$ such that 
        \begin{equation} \label{eq:Bessel neg moments two sided bounds}
            c_{\delta, \eta} \paren*{s+z^2}^{-\eta/2}
            \le 
            U_{\delta, \eta} (s,z)
            \le
            C_{\delta, \eta} \paren*{s+z^2}^{-\eta/2}
            \for 
            s >0, \ z \ge 0;
        \end{equation}
        \item we have, on $(0, \infty)^2$,
        \begin{equation} \label{eq:Bessel neg moments derivatives}
            \partial_z U_{\delta, \eta} = - \eta z U_{\delta+2, \eta+2}, \quad 
            \partial_z^2 U_{\delta, \eta} = - \eta U_{\delta+2, \eta+2} + \eta (\eta + 2) z^2 U_{\delta+4, \eta+4} 
        \end{equation}
        and 
        \begin{equation} \label{eq:Bessel neg moments PDE}
            \partial_s U_{\delta, \eta} = \frac{\delta - 1}{2 z} \partial_z U_{\delta, \eta} + \frac{1}{2} \partial_z^2 U_{\delta, \eta};
        \end{equation}
        \item there exists a constant $C(\delta, \eta)>0$ such that $\abs*{\partial_s U_{\delta, \eta}} \le C(\delta, \eta) z^{-1} \abs*{\partial_z U_{\delta, \eta}}$ for all $s,z>0$. 
    \end{enumerate}
\end{lemma}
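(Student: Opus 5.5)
The proof works with the hypergeometric representation \eqref{eq:closed forms of Bessel negative moments}. Write $a=\eta/2$, $b=\delta/2$ (so $0<a<b$) and $x=z^2/(2s)$, so that $U_{\delta,\eta}(s,z)=K_{\delta,\eta}s^{-a}M(a,b,-x)$. Three classical facts about Kummer's function will be used \cite[Sec.~13]{AbramowitzStegun1972}:
\begin{itemize}
\item Kummer's transformation $M(a,b,-x)=e^{-x}M(b-a,b,x)$, which shows $M(a,b,-x)>0$ for $x\ge0$.
\item The asymptotic $M(a,b,-x)\sim \frac{\Gamma(b)}{\Gamma(b-a)}x^{-a}$ as $x\to\infty$ (13.1.5). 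Here $\Gamma(b-a)$ is finite because $\eta<\delta$.
\item The differentiation rule $\frac{\dd}{\dd x}M(a,b,x)=\frac ab M(a+1,b+1,x)$, together with Kummer's ODE $xM''+(b-x)M'-aM=0$.
\end{itemize}

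\emph{Part (1).} Consider $g(x):=(1+x)^{a}M(a,b,-x)$ on $[0,\infty)$. It is continuous and strictly positive, with $g(0)=1$, and it has a finite positive limit at infinity by the asymptotic above. Hence $g$ is bounded above and below by positive constants. The bounds \eqref{eq:Bessel neg moments two sided bounds} then follow from the identity
\[
s^{-a}(1+z^2/(2s))^{-a}=2^{a}(2s+z^2)^{-\eta/2},
\]
since $2s+z^2$ is comparable to $s+z^2$.

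\emph{Part (2).} Differentiating in $z$ with the differentiation rule gives
\[
\partial_zU_{\delta,\eta}=-K_{\delta,\eta}\,s^{-a-1}\,\frac{\eta}{\delta}\,z\,M(a+1,b+1,-x).
\]
A direct computation gives $K_{\delta+2,\eta+2}=K_{\delta,\eta}/\delta$. Substituting this yields exactly $\partial_zU_{\delta,\eta}=-\eta zU_{\delta+2,\eta+2}$. Applying the product rule, and then the same identity to $U_{\delta+2,\eta+2}$ (which is legitimate since $\eta+2<\delta+2$), gives the formula for $\partial_z^2U_{\delta,\eta}$.

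For the PDE \eqref{eq:Bessel neg moments PDE}, note that $U_{\delta,\eta}$ has the scaling form $s^{-a}G(z^2/s)$. Therefore
\[
2s\,\partial_sU=-\eta U-z\,\partial_zU,
\]
which expresses $\partial_sU$ through $M$ and $M'$. Similarly, $\frac{\delta-1}{2z}\partial_zU+\frac12\partial_z^2U$ can be written in terms of $M$, $M'$ and $M''$ evaluated at $-x$. Subtracting the two expressions leaves a multiple of Kummer's ODE, which vanishes. Alternatively, one could cite the backward Kolmogorov equation for the Bessel semigroup applied to $z\mapsto z^{-\eta}$, but the ODE computation avoids any regularity issues.

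\emph{Part (3).} I would not use the scaling identity here. Since $\partial_sU$ equals $-(\eta U+z\partial_zU)/(2s)$, that route only gives a bound of order $(s+z^2)^{-\eta/2}/s$, which is too large when $s\ll z^2$. Instead I would insert the formulas of part (2) into the PDE:
\[
\partial_sU_{\delta,\eta}=-\frac{\delta\eta}{2}U_{\delta+2,\eta+2}+\frac{\eta(\eta+2)}{2}z^2U_{\delta+4,\eta+4}.
\]
By part (1) applied to the pairs $(\delta+4,\eta+4)$ and $(\delta+2,\eta+2)$,
\[
z^2U_{\delta+4,\eta+4}\le C z^2(s+z^2)^{-\eta/2-2}\le C(s+z^2)^{-\eta/2-1}\le C'U_{\delta+2,\eta+2}.
\]
It follows that $|\partial_sU_{\delta,\eta}|\le C\,U_{\delta+2,\eta+2}=C\eta^{-1}z^{-1}|\partial_zU_{\delta,\eta}|$, which is the claimed bound.

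The only genuinely delicate input is the large-$x$ asymptotic of $M(a,b,-x)$ in part (1). Parts (2) and (3) are algebra that rests on part (1). This is also where the restriction $\eta<\delta$ is used, through the finiteness of $\Gamma(b-a)$.
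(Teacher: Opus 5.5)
Your proposal is correct and follows essentially the same route as the paper. You obtain the two-sided bound from positivity via Kummer's transformation together with the asymptotic (13.1.5), you get part (2) from the differentiation rule and Kummer's ODE, and for part (3) you substitute the part-(2) formulas into the PDE and compare both sides with $(s+z^2)^{-(\eta+2)/2}$ using part (1). The only cosmetic difference is your normalisation $x=z^2/(2s)$, which forces the harmless comparison of $2s+z^2$ with $s+z^2$; the paper's choice $\lambda=z^2/s$ gives $(s+z^2)^{-\eta/2}$ directly.
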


Fix $p \in (0,r)$, $\eps_0 >0$ and $T>0$.
Recall that when $\ell \in (\lstar, \lstarstar]$, we have $\nu = p/\alpha \in (0,\deltastar)$ and $\deltastar = r/\alpha \ge 2$.
Choose $\delta' \in (\nu, \deltastar)$ and $K>0$, and set $s(t) = T-t+\eps_0$.
Define $\UU \in C^{1,2} ([0,T] \times \R^d)$ by
\begin{equation} \label{eq:Bessel Lyapunov def}
    \UU (t,x) = U_{\delta', \nu} \paren*{s(t), \Phi(x)} + K(T-t).
\end{equation}
We thus have $\UU (t, \cdot)$ is comparable to $\abs{x}^p$ by \eqref{eq:Bessel neg moments two sided bounds} and is amenable to a Grönwall-type argument.
\begin{proposition} \label{prop:Bessel Lyapunov is non-positive}
    There exists $K>0$ independent of $\eps_0$ and $T$ such that $$
    \paren*{\partial_t + \AA} \, \UU \le 0 \myon [0,T] \times \R^d.$$ 
\end{proposition}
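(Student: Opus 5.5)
We split $\R^d$ into the region $\bar{\B}_{R}^c$ with $R$ large and the compact set $\bar{\B}_{R}$; the constant $K$ is chosen at the very end to absorb the compact part. Throughout, write $U = U_{\delta',\nu}$, evaluated at $(s(t),z)$ with $z = \Phi(x)$. Since $\partial_t s = -1$, we have $\partial_t \UU = -\partial_s U - K$.

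\emph{Far region.} For $\abs{x} > R \ge 2$ we use the generator identity \eqref{eq:generator almost Bessel} and then substitute the backward equation \eqref{eq:Bessel neg moments PDE} with $\delta = \delta'$:
\begin{align*}
(\partial_t + \AA)\UU
&= -\partial_s U - K + \theta_\ell^2\Big[\tfrac{\delta_\ell - 1}{2z}\partial_z U + \tfrac12 \partial_z^2 U\Big] \\
&= (\theta_\ell^2 - 1)\,\partial_s U + \theta_\ell^2\,\frac{\delta_\ell(x) - \delta'}{2z}\,\partial_z U - K.
\end{align*}
By \eqref{eq:Bessel neg moments derivatives}, $\partial_z U = -\nu z U_{\delta'+2,\nu+2} < 0$. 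By \eqref{eq:generator almost Bessel error}, $\delta_\ell(x) \to \deltastar > \delta'$, so for $R$ large the second term is at most $-c\,\nu\, U_{\delta'+2,\nu+2} \le -c' z^{-1}\abs{\partial_z U}$, with $c' > 0$ independent of $s$. For the first term, Lemma~\ref{lemma:properties of Bessel neg moments}(3) gives
\[
\abs{(\theta_\ell^2 - 1)\,\partial_s U} \le \abs{\theta_\ell^2 - 1}\, C(\delta',\nu)\, z^{-1}\abs{\partial_z U},
\]
and $\theta_\ell \to 1$. Enlarging $R$ until $\abs{\theta_\ell^2 - 1}\, C(\delta',\nu) < c'$ therefore makes the whole expression $\le -K \le 0$ on $\bar{\B}_R^c$, for every $K \ge 0$. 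These bounds hold uniformly in $s > 0$, hence uniformly in $\eps_0$ and $T$. The key point is that the strict gap $\delta' < \deltastar$ produces a negative drift term of exactly the same order as the error term controlled by Lemma~\ref{lemma:properties of Bessel neg moments}(3).

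\emph{Compact region.} On $\bar{\B}_R$ we need $(\partial_t + \AA)U(s(t),\Phi(x)) \le K$ uniformly in $s \in (0,\infty)$; this is the step I expect to be the main obstacle, because $s(t) \ge \eps_0$ may be arbitrarily small while $K$ must not depend on $\eps_0$. The saving fact is that $\Phi$ is positive and continuous, so $z = \Phi(x) \ge z_0 > 0$ on $\bar{\B}_R$, which keeps us away from the singularity at $z = 0$. We write
\[
\AA (U\circ\Phi) = \partial_z U\,\AA\Phi + \partial_z^2 U\,\pi^{-2\ell}\abs{\nabla\Phi}^2,
\]
and $\AA\Phi$, $\pi^{-2\ell}\abs{\nabla\Phi}^2$ are bounded on $\bar{\B}_R$. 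By \eqref{eq:Bessel neg moments derivatives}, $\abs{\partial_z U}$ and $\abs{\partial_z^2 U}$ are controlled by $U_{\delta'+2,\nu+2}$ and $U_{\delta'+4,\nu+4}$ times powers of $z$. By the two-sided bound \eqref{eq:Bessel neg moments two sided bounds}, these are at most $C(s+z^2)^{-(\nu+2)/2} \le C z_0^{-\nu-2}$, and similarly for the higher index, uniformly in $s > 0$. Lemma~\ref{lemma:properties of Bessel neg moments}(3) bounds $\abs{\partial_s U} \le C z_0^{-1}\abs{\partial_z U}$ uniformly as well. Hence $\sup_{s>0,\ \abs{x}\le R}\abs{(-\partial_s + \AA)(U\circ\Phi)} =: K_0 < \infty$. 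Taking $K = K_0$ gives $(\partial_t + \AA)\UU \le K_0 - K = 0$ on $[0,T]\times\bar{\B}_R$, which completes the proof with $K$ independent of $\eps_0$ and $T$.
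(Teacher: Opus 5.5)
Your proof is correct and follows the paper's argument essentially step for step. It uses the same split into $\bar{\B}_R^c$ and $\bar{\B}_R$, and the same rearrangement of the backward equation against the generator identity into $(\theta_\ell^2-1)\partial_s U+\theta_\ell^2\frac{\delta_\ell-\delta'}{2z}\partial_z U$, controlled via Lemma~\ref{lemma:properties of Bessel neg moments}(3); on the compact set it likewise uses $\inf_{\bar{\B}_R}\Phi>0$ to bound the derivatives uniformly in $s$. The only cosmetic point is to take $K=\max\{K_0,1\}$ so that $K>0$ as stated.
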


\begin{proof}
    For $(t,x) \in [0,T] \times \R^d$, a direct computation gives
    \begin{equation} \label{eq:bound Bessel Lyapunov}
        \paren*{\partial_t + \AA} \, \UU (t,x)
        = -K - \partial_s U_{\delta', \nu} (s(t), \Phi(x))
        + \AA \paren*{U_{\delta', \nu} (s(t), \cdot) \circ \Phi} (x).
    \end{equation}
    Write $z = \Phi (x)$, and abbreviate $s = s(t)$ and $U = U_{\delta', \nu} \paren*{s,z}$.
    Combining \eqref{eq:Bessel neg moments PDE} with \eqref{eq:generator almost Bessel} yields
    \begin{equation} \label{eq:tail bound Bessel Lyapunov}
        \begin{aligned}
            \paren*{\partial_t + \AA} \, \UU
            &\le - \paren*{ \frac{\delta' -1}{2z} \partial_z U + \frac{1}{2} \partial_z^2 U}
                + \theta_\ell(x)^2 \paren*{\frac{\delta_\ell(x) -1}{2z} \partial_z U + \frac{1}{2} \partial_z^2 U} \\
            &=  \paren*{\theta_\ell(x)^2 - 1} \paren*{ \frac{\delta' -1}{2z} \partial_z U + \frac{1}{2} \partial_z^2 U}
                + \theta_\ell(x)^2 \paren*{\frac{\delta_\ell(x) -1}{2z} \partial_z U - \frac{\delta' -1}{2z} \partial_z U} \\
            &\le \abs*{\theta_\ell(x)^2 - 1} \abs*{\partial_s U} - \theta_\ell(x)^2 \frac{\delta_\ell(x) - \delta'}{2z} \abs*{\partial_z U},
        \end{aligned}
    \end{equation}
    where the last step uses that $\partial_z U < 0$, as shown in \eqref{eq:Bessel neg moments derivatives}.
    Set $\lambda = \deltastar - \delta' > 0$.
    Then \eqref{eq:generator almost Bessel error} yields the existence of $R>2$ such that
    $$
    \delta_\ell (x) - \delta' > \frac{\lambda}{2}
    \myand
    \abs*{\theta_\ell(x)^2 - 1} < \frac{\lambda}{8 C(\delta', \nu)} \wedge \frac{1}{2}
    \for
    x \in \bar{\B}_R^c.
    $$
    Plugging these bounds into \eqref{eq:tail bound Bessel Lyapunov} and using Lemma~\ref{lemma:properties of Bessel neg moments} (3), we obtain on $\bar{\B}_R^c$
    $$
    \paren*{\partial_t + \AA} \, \UU
    \le \frac{\lambda}{8} \frac{\abs{\partial_s U}}{C(\delta', \nu)} - \frac{\lambda}{8}  \frac{\abs*{\partial_z U}}{z}
    \le 0.
    $$

    It remains to study $\paren*{\partial_t + \AA} \, \UU$ on the compact subset $[0,T] \times \bar{\B}_R$. 
    By \eqref{eq:bound Bessel Lyapunov}, \eqref{eq:generator on general functions} and the chain rule,
    $$
    \paren*{\partial_t + \AA} \, \UU
        \le -K + \abs*{\partial_s U_{\delta', \nu}}
        + \pi^{-2\ell} \abs*{ \paren*{(1-2\ell) \inner{\nabla \log \pi, \nabla \Phi} + \Delta \Phi} \partial_z U + \abs*{\nabla \Phi}^2 \partial_z^2 U}.
    $$
    Define $I_R = \inf_{x \in \bar{\B}_R} \Phi(x) >0$ which is positive since $\Phi$ is continuous and positive on $\R^d$ and $\bar{\B}_R$ is compact.
    Lemma~\ref{lemma:properties of Bessel neg moments} then gives a constant $C>0$, independent of $\eps_0$ and $T$, such that for $x \in \bar{\B}_R$
    $$
    \abs{\partial_z U(s,z)} \le C I_R^{-(\nu+1)}
    \myand
    \max \set*{\abs{\partial_z^2 U(s,z)}, \abs{\partial_s U(s,z)}} \le C I_R^{-(\nu+2)}.
    $$
    Moreover, since $\pi, \Phi \in C^2(\R^d)$ and $\pi$ is positive, the quantities $\pi^{-2\ell}$, $\abs{\nabla \log \pi}$, $\abs{\nabla \Phi}$ and $\abs{\Delta \Phi}$ are uniformly bounded on $\bar{\B}_R$.
    Combining these bounds shows that 
    $$
    \sup_{(t,x) \in [0,T] \times \bar{\B}_R}  \paren*{\abs*{\partial_s U_{\delta', \nu}} + \abs{\AA \UU}} \le C(R)
    $$ 
    for some constant $C(R)>0$ independent of $\eps_0$ and $T$, and taking $K = C(R)$ completes the proof.
\end{proof}

The next proposition confirms that $\Pi$ is the unique invariant probability measure of $X$ and shows that the transition density of $X$ with respect to $\Pi$ is symmetric in the spatial variables. 
This symmetry will be used to prove the assertion about infinite moments in Theorem~\ref{thm:TLD moments} (3). 

\begin{proposition} \label{prop:TLD density}
    Suppose $0 \le \ell \le \lstarstar$. 
    There exists a continuous function $p: (0,\infty) \times \R^d \times \R^d \to (0,\infty)$ such that $\P_x (X_t \in \dd y) = p(t,x,y) \dd y$. 
    The function $q: (0,\infty) \times \R^d \times \R^d \to (0,\infty)$ defined by
    $$
    q(t, x, y) := \frac{p(t, x, y)}{Z_\pi^{-1} \pi(y)}, \mywhere Z_\pi := \int_{\R^d} \pi(x) \dd x,
    $$
    is symmetric in $x$ and $y$. 
    Moreover, 
    \begin{equation} \label{eq:convergence to inv measure}
        \lim_{t \to \infty} \int_A p(t,x,y) \dd y = Z_\pi^{-1} \int_A \pi(dy)
        \myforall
        A \in \BB (\R^d), 
    \end{equation}
    and $\Pi$ is the unique invariant probability measure of $X$. 
\end{proposition}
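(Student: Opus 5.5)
The plan is to derive everything from Kent's theory of diffusions with divergence-form generators, together with the non-explosion already established in Theorem~\ref{thm:TLD explosion}.

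\emph{Step 1: existence and positivity of a continuous density.} Since $\pi\in C^2$ has locally $\gamma$-Hölder second derivatives, the coefficients $a=\pi^{-2\ell}\imatrix_d$ and $b$ are locally Hölder (in fact $C^{1,\gamma}$ and $C^{0,\gamma}$ locally), and $a$ is locally uniformly elliptic. Hence \cite[Thm~1.1]{Kent1978} gives a transition density $p(t,x,y)$ before explosion, continuous in $(t,x,y)$ and strictly positive. For $0\le\ell\le\lstarstar$ we have $\zeta=\infty$ a.s.\ by Theorem~\ref{thm:TLD explosion}, so this is the transition density of $X$ itself. If a self-contained route is preferred, I would exhaust $\R^d$ by balls $\B_n$ and consider the killed diffusions. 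Their Dirichlet heat kernels $p_n$ are classical, continuous and positive by parabolic Schauder theory and the strong maximum principle, and they increase in $n$ to $p$. Continuity of the limit then follows from local parabolic Harnack/Hölder estimates applied to the uniformly bounded family.

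\emph{Step 2: symmetry of $q$.} The generator $\AA f=\pi^{-1}\diver(\pi^{1-2\ell}\nabla f)$ is symmetric on $C_c^\infty(\B_n)$ in $L^2(\pi\,\dd x)$ by integration by parts. Consequently the killed Dirichlet kernels satisfy $\pi(x)p_n(t,x,y)=\pi(y)p_n(t,y,x)$; this is the standard symmetry of Green/heat kernels for self-adjoint Dirichlet problems, obtained by applying Itô's formula or Duhamel to $s\mapsto\int p_n(s,x,z)p_n(t-s,z,y)\dots$. Letting $n\to\infty$ with monotone convergence, which uses non-explosion, gives $\pi(x)p(t,x,y)=\pi(y)p(t,y,x)$. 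Hence $q$ is symmetric. Integrating this identity in $x$ shows that $\Pi$ is invariant: $\int\pi(x)p(t,x,y)\,\dd x=\pi(y)\int p(t,y,x)\,\dd x=\pi(y)$, where the last equality is conservativeness, i.e.\ non-explosion again.

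\emph{Step 3: convergence and uniqueness.} Since $p>0$ and is continuous, $X$ is Lebesgue-irreducible and strong Feller, so every compact set is petite. Together with the existence of the invariant probability $\Pi$, standard Harris theory for continuous-time processes (Meyn--Tweedie, or \cite[Thms~4.1 \& 6.2]{Kent1978}, which state exactly this for Kent's conservative diffusions with a finite invariant measure) yields positive Harris recurrence. It follows that $\Pi$ is the unique invariant probability measure and that $\norm{\P_x(X_t\in\cdot)-\Pi}_{\TV}\to0$ for every $x$, which implies \eqref{eq:convergence to inv measure}.

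The main obstacle is Step 2, namely justifying the passage from the formal symmetry of $\AA$ to pointwise symmetry of the kernel on the unbounded space. This hinges on conservativeness at the borderline $\ell=\lstarstar$, which is supplied by Theorem~\ref{thm:TLD explosion}. I would rely on Kent's results for this rather than redo the approximation argument in full.
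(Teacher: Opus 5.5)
Your proposal is correct and follows essentially the same route as the paper. The paper's proof combines non-explosion (Theorem~\ref{thm:TLD explosion}) with \cite[Thm~1.1]{Kent1978} for the continuous positive density, and with \cite[Thms~4.1 \&~6.2]{Kent1978} for the symmetry of $q$, the limit \eqref{eq:convergence to inv measure} and the uniqueness of $\Pi$; your Dirichlet-kernel approximation and Harris-theory sketches are sound elaborations of what those citations supply rather than a different argument.
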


\begin{proof} 
    Since $X$ is non-explosive and \eqref{AP} holds, the existence and properties of $p$ follow from \cite[Thm~1.1]{Kent1978}.
    By \cite[Thm~4.1 \& Thm~6.2]{Kent1978}, $q(t, \cdot, \cdot)$ is symmetric in its spatial variables and \eqref{eq:convergence to inv measure} holds. 
\end{proof}

We are now ready to finish the proof of Theorem~\ref{thm:TLD moments}. 
\begin{proof}[Proof of Theorem~\ref{thm:TLD moments} (3)]
    Suppose $\ell \in (\lstar, \lstarstar]$. 
    Recalling that $\zeta_n := \inf \set{t > 0: \abs{X_t} \ge n}$ and $\zeta = \lim_{n \to \infty} \zeta_n$, this assumption guarantees that $\zeta = \infty$ almost surely.

    \smallskip
    \noindent
    \underline{$p \in [0,r)$.}
    Let $x \in \R^d$ and $T>0$.
    Applying Itô's formula localised at $\zeta_n$ and Proposition~\ref{prop:Bessel Lyapunov is non-positive} yields
    $$
    \E_x \bracket*{\UU \paren*{T \wedge \zeta_n, X_{T\wedge \zeta_n}}}
    =
    \UU \paren*{0, x} + \E_x \bracket*{\int_0^{T \wedge \zeta_n} \paren*{\partial_t + \AA} \UU (v, X_v) \dd v}
    \le
    \UU \paren*{0, x}.
    $$
    Combining Fatou's lemma with \eqref{eq:Bessel neg moments two sided bounds} then gives
    $$
    \E_x \bracket*{U_{\delta', \nu} (s(T), \Phi (X_T))}
    \le KT + C_{\delta', \nu} (T + \eps_0)^{-\nu/2},
    $$
    while \eqref{eq:Bessel neg moments two sided bounds} also gives
    $
    \E_x \bracket*{U_{\delta', \nu} (s(T), \Phi (X_T))}
    \ge
    c_{\delta', \nu} \E_x \bracket{\paren*{\eps_0 + \Phi(X_T)^2}^{-\nu/2}}.
    $
    Sending $\eps_0 \downarrow 0$ and applying the monotone convergence theorem gives 
    $$
    \E_x \bracket*{\Phi(X_T)^{-\nu}}
    \le
    c_{\delta', \nu}^{-1} \big( KT + C_{\delta', \nu} T^{-\nu/2} \big).
    $$
    Since this bound is uniform over $x\in\R^d$, \eqref{eq:equivalence Bessel neg moments} yields
    $$
    \sup_{x \in \R^d} \E_x \bracket*{\abs{X_T}^p}
    \le
    2^p + \paren*{\sqrt 2 \alpha}^{-\nu}  c_{\delta', \nu}^{-1} \paren*{KT + C_{\delta', \nu} T^{-\nu/2}}
    =: C(T).
    $$
    Fix $t_0>0$. 
    For any $t \ge t_0$, the Markov property applied at time $t - t_0$ gives
    $$
    \E_x \bracket*{\abs{X_t}^p}
    = \E_x \bracket*{\E_{X_{t - t_0}} \bracket*{\abs{X_{t_0}}^p} }
    \le C(t_0),
    $$
    which completes the proof of \eqref{eq:moments uniform in time and space above lstar}.

    \smallskip
    \noindent
    \underline{$p \in [r,\infty)$.}
    Let $t>0$ and $x \in \R^d$ be fixed.
    As in Proposition~\ref{prop:TLD density}, denote by $q:(0,\infty) \times \R^d \times \R^d \to (0,\infty)$ the transition density of $X$ with respect to the invariant measure $\Pi$.
    We claim that
    $$
    I(t,x) = \inf_{y\in \R^d} q(t, x, y) >0.
    $$
    For $R>0$, set $u = t/2$ and $L(R) = \inf_{w \in \bar{\B}_{R}} q \paren*{u, x, w}$.
    By Proposition~\ref{prop:TLD density}, $q$ is continuous and positive, and since $\bar{\B}_{R}$ is compact, it follows that $L(R)>0$. 
    Let $y \in \R^d$. 
    Using the Chapman–Kolmogorov equation together with the symmetry of $q$ in its spatial variables (see Proposition~\ref{prop:TLD density}), we obtain
    \begin{align*}
        q(t, x, y)
        \ge
        \int_{\bar{\B}_{R}} q \paren*{u, x, w} q \paren*{u, w, y} \Pi (\dd w)
        \ge
        L(R) \int_{\bar{\B}_{R}} q \paren*{u, y, w} \Pi (\dd w)
        =
        L(R) \P_y (X_{u}  \in \bar{\B}_{R}).
    \end{align*}
    Choose $\gamma \in (0,r)$ and set $R = \paren*{2 \sup_{w \in \R^d} \E_w \bracket*{\abs{X_u}^\gamma}}^{1/\gamma}$, which is finite by \eqref{eq:moments uniform in time and space above lstar}.
    The Markov inequality then gives
    $$
    \P_y (X_{u}  \in \bar{\B}_{R})
    =
    1 - \P_y \paren*{\abs{X_{u}} > R}
    \ge
    1 - \frac{\sup_{w \in \R^d} \E_w \bracket*{\abs{X_u}^\gamma}}{R^\gamma}
    = \frac{1}{2}.
    $$
    Hence $I(t,x) \ge L(R)/2 >0$, which proves the claim. 
    Thus, by \eqref{AP},
    $$
    \E_x \bracket*{\abs{X_t}^p}
    =
    \int_{\R^d} \abs{y}^p q(t, x, y) \Pi (\dd y)
    \ge I(t,x) \int_{\R^d} \abs{y}^p \Pi (\dd y)
    = \infty
    \myforall
    p \in [r, \infty),
    $$ 
    which completes the proof.
\end{proof}

\section{Ergodicity} \label{sec:proof ergodicity}

In this section, we prove the ergodicity results.
We first derive some general bounds on Wasserstein distances in Section~\ref{subsec:bounding wasserstein} and then prove Theorem~\ref{thm:TLD convergence rates} in Section~\ref{subsec:proof ergodicity}.

\subsection{Bounding the Wasserstein distance} \label{subsec:bounding wasserstein}
We establish general upper and lower bounds on Wasserstein distances in Proposition~\ref{prop:upper and lower bounds wasserstein distance}. 
From the upper bound we deduce a control of the Wasserstein distance by the $f$-variation distance in \eqref{eq:f-var control wasserstein}, while the lower bound yields a lower bound on the convergence in Wasserstein distance in Lemma~\ref{lemma:lower bound Wasserstein convergence}, the counterpart of the lower bound on $f$-variation in \cite[Lemma~5.1]{BM2026}.
Before introducing these results, let us first define $f$-variation and Wasserstein distances on general metric spaces. 

Let $(\XX, d)$ be a Polish space endowed with the Borel $\sigma$-algebra. 
For any measurable function $f: \XX \to [1, \infty)$, the \textit{$f$-variation} of a signed measure $\mu$ on $(\XX, \BB (\XX))$ is 
$$
\norm{\mu}_f = \sup \set{ \abs{\int g \, \dd \mu}: g: \XX \to \R \text{ measurable and } \abs{g} \le f}.
$$ 
When $f \equiv 1$, it is the \textit{total variation} denoted by $\norm{\cdot}_{\TV}$. 
For any two probability measures $\mu, \nu$ on $(\XX, \BB (\XX))$, we have $\norm{\mu - \nu}_{\TV} = 2\sup_{A \in \BB(\XX)} \abs{\mu(A) - \nu(A)}$. 

For $p \in (0, \infty)$, let $\PP_p(\XX)$ denote the space of probability measures on $\XX$ with finite $p$-th moment, that is,  $\int_\XX d(x_0, x)^p \mu(\dd x) < \infty$ for some (equivalently, every) $x_0 \in \XX$. 
The \textit{$L^p$-Wasserstein distance} between probability measures $\mu, \nu \in \PP_p(\XX)$ is 
$$
\WW_p(\mu, \nu) = \inf_{\eta \in C(\mu, \nu)} \paren*{\int_{\XX \times \XX} d(x,y)^p \, \eta (\dd x, \dd y) }^{\frac{1}{p \vee 1}},
$$
where $C(\mu, \nu)$ is the collection of all couplings of $\mu$ and $\nu$. 

\begin{proposition} \label{prop:upper and lower bounds wasserstein distance}
    Let $\mu, \nu\in \PP_p(\XX)$ be probability measures on $\XX$ for $p \in (0, \infty)$.
    Then, for any $x_0 \in \XX$,
    \begin{equation} \label{eq:wasserstein upper bound}
        \WW_p (\mu, \nu)^{p \vee 1}
        \le
        2^{p \vee 1 - 1} \int_{\XX} d(x, x_0)^p \, \dd \abs{\mu - \nu} (x).
    \end{equation}
    Let $\varphi: \XX \to \R_+$ be Lipschitz continuous with constant $\norm{\varphi}_{\Lip} > 0$.
    For any $p \in (0,\infty)$, we have 
    \begin{equation} \label{eq:wasserstein lower bound}
        \abs*{\paren*{\int_\XX \varphi^{p} \dd \mu}^{\frac{1}{p \vee 1}} -  \paren*{\int_\XX \varphi^p \dd \nu}^{\frac{1}{p \vee 1}}} 
        \le
        \norm{\varphi}_{\Lip}^{p \wedge 1} \WW_p (\mu, \nu).
    \end{equation}
\end{proposition}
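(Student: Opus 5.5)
For the upper bound \eqref{eq:wasserstein upper bound} I will build an explicit coupling out of the Jordan decomposition. If $\mu=\nu$ there is nothing to prove, so assume $\mu\ne\nu$ and write $\mu-\nu=(\mu-\nu)^+-(\mu-\nu)^-$. Both parts have the same mass $m=\tfrac12\norm{\mu-\nu}_{\TV}>0$, and the common part is $\mu\wedge\nu=\mu-(\mu-\nu)^+$. The coupling is
$$
\eta=(\id,\id)_\#(\mu\wedge\nu)+m^{-1}(\mu-\nu)^+\otimes(\mu-\nu)^-.
$$
Its marginals are $\mu$ and $\nu$. The diagonal part contributes nothing to $\int d(x,y)^p\,\dd\eta$. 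On the product part I use the elementary inequality $d(x,y)^p\le 2^{(p\vee1)-1}\paren*{d(x,x_0)^p+d(y,x_0)^p}$, which follows from the triangle inequality combined with convexity of $u\mapsto u^p$ when $p\ge1$ and with subadditivity $(A+B)^p\le A^p+B^p$ when $p<1$. Integrating against $m^{-1}(\mu-\nu)^+\otimes(\mu-\nu)^-$ gives
$$
\int d(x,y)^p\,\dd\eta\le 2^{(p\vee1)-1}\int d(x,x_0)^p\,\dd\paren*{(\mu-\nu)^++(\mu-\nu)^-}.
$$
The right-hand side is exactly $2^{(p\vee1)-1}\int d(x,x_0)^p\,\dd\abs{\mu-\nu}$. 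Because $\WW_p^{p\vee1}$ is the infimum over couplings of $\int d^p\,\dd\eta$, this proves \eqref{eq:wasserstein upper bound}.

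For the lower bound \eqref{eq:wasserstein lower bound} I split into the two cases $p\ge1$ and $p<1$. Fix any coupling $\eta\in C(\mu,\nu)$ and let $(X,Y)\sim\eta$.

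When $p\ge1$, the map $\varphi$ lies in $L^p(\mu)$, since $\varphi(x)\le\varphi(x_0)+\norm{\varphi}_{\Lip}d(x,x_0)$ and $\mu\in\PP_p$; the same holds for $\nu$. The reverse Minkowski inequality then gives
$$
\abs*{\norm{\varphi(X)}_{L^p}-\norm{\varphi(Y)}_{L^p}}\le\norm{\varphi(X)-\varphi(Y)}_{L^p}\le\norm{\varphi}_{\Lip}\paren*{\E\,d(X,Y)^p}^{1/p}.
$$
Taking the infimum over $\eta$ yields the claim, because for $p\ge1$ the right-hand side is $\norm{\varphi}_{\Lip}\WW_p$ to the first power.

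When $p<1$, I use $\abs{a^p-b^p}\le\abs{a-b}^p$ for $a,b\ge0$ together with the Lipschitz bound. This gives
$$
\abs*{\E\varphi(X)^p-\E\varphi(Y)^p}\le\E\abs{\varphi(X)-\varphi(Y)}^p\le\norm{\varphi}_{\Lip}^p\,\E\,d(X,Y)^p.
$$
Taking the infimum over $\eta$ gives the claim, since for $p<1$ we have $p\vee1=1$, so $\WW_p=\inf_\eta\E\,d(X,Y)^p$.

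None of these steps is really an obstacle. The only points needing care are the Jordan-decomposition coupling (including the degenerate case $\mu=\nu$) and keeping the exponents $p\vee1$ and $p\wedge1$ straight in each case.
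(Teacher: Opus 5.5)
Your proposal is correct and follows the paper's route. For the upper bound the paper cites the proof of \cite[Thm~6.15]{Villani2009}, which is exactly your coupling $(\id,\id)_\#(\mu\wedge\nu)+m^{-1}(\mu-\nu)^+\otimes(\mu-\nu)^-$ with the elementary inequality, and it extends this to $p<1$ via subadditivity, as you do; for the lower bound it likewise fixes a coupling, applies subadditivity for $p\in(0,1)$ or Minkowski's inequality in $L^p(\eta)$ for $p\ge1$, then the Lipschitz bound, and takes the infimum (your explicit check that $\varphi^p$ is integrable under $\mu$ and $\nu$ is a detail the paper leaves implicit).
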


\begin{remark}
    For $p \in [1,\infty)$ and $\XX$ a locally compact Polish length space, \eqref{eq:wasserstein lower bound} was established in \cite[Prop.~7.29]{Villani2009} via displacement interpolation, whereas we prove \eqref{eq:wasserstein lower bound} directly via Minkowski's inequality without further assumption on $\XX$.
\end{remark}

\begin{proof}[Proof of Proposition~\ref{prop:upper and lower bounds wasserstein distance}]
    Since $x \le x^p$ on $[0,1]$ for any $p\in(0,1)$, the following inequality holds:
    \begin{equation} \label{fact:subadditivity}
        (A+B)^p \le A^p + B^p \quad \text{for all} \quad A,B \ge 0 \myand p \in (0,1).
    \end{equation}
    The assertion \eqref{eq:wasserstein upper bound} for $p \in [1, \infty)$ is proved in \cite[Thm~6.15]{Villani2009}, and the same argument, combined with \eqref{fact:subadditivity}, extends it to $p \in (0,1)$.

    Let $\eta \in C(\mu, \nu)$ be a coupling of $\mu$ and $\nu$. 
    Define the functions $\tilde \varphi: (x,y) \mapsto \varphi(x)$ and $\hat \varphi: (x,y) \mapsto \varphi(y)$, so that 
    $\int_\XX \varphi^p \dd \mu = \int_{\XX \times \XX} \tilde \varphi^p \dd \eta$ and $\int_\XX \varphi^p \dd \nu = \int_{\XX \times \XX} \hat \varphi^p \dd \eta$.
    Then, applying \eqref{fact:subadditivity} if $p \in (0,1)$, or Minkowski's inequality in $L^p(\eta)$ if $p \in [1,\infty)$ yields
    \begin{align*}
        \abs*{\paren*{\int_\XX \varphi^p \dd \mu}^{\frac{1}{p \vee 1}} -  \paren*{\int_\XX \varphi^p \dd \nu}^{\frac{1}{p \vee 1}}}
        & =
        \abs*{\paren*{\int_{\XX \times \XX} \tilde \varphi^p \dd \eta}^{\frac{1}{p \vee 1}} -  \paren*{\int_{\XX \times \XX} \hat \varphi^p \dd \eta}^{\frac{1}{p \vee 1}}} \\
        & \le
        \paren*{\int_{\XX \times \XX} \abs*{\varphi(x) - \varphi(y)}^p \eta \paren{\dd x, \dd y}}^{\frac{1}{p \vee 1}} \\
        & \le
        \norm{\varphi}_{\Lip}^{p \wedge 1} \paren*{\int_{\XX \times \XX} d(x,y)^p \eta \paren{\dd x, \dd y}}^{\frac{1}{p \vee 1}},
    \end{align*}
    where the last inequality comes from the Lipschitz continuity of $\varphi$. 
    Taking the infimum over $\eta \in C(\mu, \nu)$ yields \eqref{eq:wasserstein lower bound} and completes the proof. 
\end{proof}

To turn \eqref{eq:wasserstein upper bound} into an upper bound on the Wasserstein distance, we use the following integral representation of the $f$-variation, whose proof is deferred to Appendix~\ref{appendix:proof f-variation representation}.

\begin{lemma} \label{lemma:bound by f-var}
    Let $\eta$ be a signed measure on a measurable space $(E, \EE)$. 
    For any measurable function $f: E \to [1, \infty)$ such that $\int f \dd \abs{\eta} < \infty$, we have $\int f \dd \abs{\eta} = \norm{\eta}_f$. 
\end{lemma}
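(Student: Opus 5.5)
We will use the Hahn--Jordan decomposition. Let $E = P \cup N$ be a Hahn decomposition for $\eta$, so that $\eta^+ = \eta(\cdot \cap P)$, $\eta^- = -\eta(\cdot \cap N)$ and $\abs{\eta} = \eta^+ + \eta^-$. The proof then reduces to two inequalities, $\norm{\eta}_f \le \int f \dd \abs{\eta}$ and the reverse.

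\emph{Upper bound.} Take any measurable $g$ with $\abs{g} \le f$. Then $g$ is $\abs{\eta}$-integrable, because $\int f \dd\abs{\eta} < \infty$, and so $\int g \dd\eta = \int g \dd \eta^+ - \int g \dd\eta^-$ is well defined. It satisfies
$$\abs*{\int g \dd \eta} \le \int \abs{g} \dd \eta^+ + \int \abs{g} \dd\eta^- = \int \abs{g} \dd \abs{\eta} \le \int f \dd\abs{\eta}.$$
Taking the supremum over $g$ gives $\norm{\eta}_f \le \int f \dd\abs{\eta}$.

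\emph{Lower bound.} Choose the test function $g = f(\1{P} - \1{N})$. It is measurable and satisfies $\abs{g} = f$, so it is admissible. For this choice,
$$\int g \dd\eta = \int_P f \dd\eta^+ + \int_N f \dd \eta^- = \int f \dd\abs{\eta}.$$
Here we use that $\eta^+$ is concentrated on $P$ and $\eta^-$ on $N$, and the finiteness assumption ensures there is no $\infty - \infty$ issue. Hence $\norm{\eta}_f \ge \int f\dd\abs{\eta}$, and the two bounds give the claimed identity.

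The only delicate point is that $\eta$ might not be a finite signed measure. If $\eta$ is allowed to take one infinite value, the Hahn decomposition still exists. Since $f \ge 1$, the hypothesis gives $\abs{\eta}(E) \le \int f \dd\abs{\eta} < \infty$, so $\eta$ is in fact finite and all the integrals above are finite. No further obstacle is expected.
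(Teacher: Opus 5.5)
Your proposal is correct and follows essentially the same route as the paper: a Hahn decomposition, the test function $f(\1{P}-\1{N})$ for the lower bound, and $\abs{\int g \,\dd\eta} \le \int \abs{g}\,\dd\abs{\eta} \le \int f\,\dd\abs{\eta}$ for the upper bound. Your extra remark that $f \ge 1$ forces $\abs{\eta}(E) < \infty$ is a small point the paper leaves implicit.
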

\noindent
Consequently, let $\mu, \nu \in \PP_p (\XX)$, and suppose there exist a measurable function $f: \XX \to [1, \infty)$, a constant $c > 0$ and a point $x_0 \in \XX$ such that $\int f \dd \abs{\mu - \nu} < \infty$ and $d(\cdot, x_0)^p \le c f$ on $\XX$.
Then Lemma~\ref{lemma:bound by f-var} yields
\begin{equation} \label{eq:f-var control wasserstein}
    \WW_p(\mu, \nu)^{p \vee 1} \le 2^{p \vee 1 - 1} c \norm{\mu - \nu}_f.
\end{equation}

The following lemma extends \cite[Lemma~4.8]{BMR2025}, established there for Markov chains and $p \in [1,\infty)$, to general Markov processes and every $p \in (0,\infty)$.

\begin{lemma} \label{lemma:lower bound Wasserstein convergence}
    Let $\xi = (\xi_t)_{t \ge 0}$ be a Markov process with an invariant measure $\Pi$ on $(\XX,d)$. 
    Assume that for some $p \in (0,\infty)$, we have $\Pi, \P_x (\xi_t \in \cdot)\in\PP_p(\XX)$ for all $x\in\XX$ and $t \ge 0$. 
    Let $W:\XX \to [1,\infty)$ be a Lipschitz function, that is, $\abs{W(x)-W(y)} \leq \norm{W}_{\Lip} d(x,y)$ for all $x,y \in \XX$ with the Lipschitz constant $\norm{W}_{\Lip} > 0$. 
    Let $g: [1,\infty) \to [1,\infty)$ be continuous and strictly increasing with $g(1) = 1$ and $\lim_{\rho \to \infty} g(\rho) = \infty$. Define $H = W^p (g \circ (2W))$ and assume the following conditions hold. 
    \begin{enumerate}
        \item There exists a continuous function $a_g: [1,\infty) \to (0,1]$, such that $A_g (\rho) = \rho a_g (\rho)$ is strictly increasing, $\lim_{\rho \uparrow \infty} A_g (\rho) = \infty$ and 
        $$
        \int_{\set*{g \circ W \ge \rho}} \paren*{W(x)/2}^p \, \Pi(\dd x) \ge a_g(\rho) 
        \myforall 
        \rho \in [1,\infty).
        $$
        \item There exists a function $v: \XX \times [0, \infty) \to [1,\infty)$, increasing in the second argument and satisfying 
        $\E_x \bracket*{ H(\xi_t)} \le v(x,t)$ for all $x \in \XX$ and $t \ge 0$. 
    \end{enumerate}
    Then the following bound holds for every $t > 0$ and $x \in \XX$:
    \begin{equation} \label{eq:Wasserstein convergence rate lower bound}
        \WW_p(\P_x (\xi_t \in \cdot), \Pi)^{p \vee 1}
        \ge 
        \frac{\paren*{a_g \circ A_g^{-1}\circ (2^{p \vee 1} v)} (x,t)}{2^{p \vee 1} \norm{W}_{\Lip}^p}.
    \end{equation}
\end{lemma}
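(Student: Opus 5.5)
Fix $x \in \XX$ and $t > 0$, and write $\mu = \P_x(\xi_t \in \cdot)$. The plan is to compare the $W^p$-integrals of $\mu$ and $\Pi$ over the region where $W$ is large, and then convert that comparison into a Wasserstein bound using \eqref{eq:wasserstein lower bound} from Proposition~\ref{prop:upper and lower bounds wasserstein distance}.

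The natural test function is a truncation of $W$. For $\rho \ge 1$, define
\[
\varphi_\rho = \paren*{W - W\wedge \rho'}
\quad\text{or, more simply,}\quad
\varphi_\rho = W \wedge R,
\]
chosen to be Lipschitz with constant $\norm{W}_{\Lip}$. With the cap $W \wedge R$ and $R$ tied to $g^{-1}(\rho)$, the integral $\int \varphi_\rho^p \dd \Pi$ can be bounded below on $\set{g\circ W \ge \rho}$. The integral $\int \varphi_\rho^p \dd \mu$ is controlled by $v$ through Markov's inequality.

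This is cleanest with the test function $\varphi(y) = (W(y) - R/2)_+$, where $R = g^{-1}(\rho)$. This $\varphi$ is Lipschitz with constant $\norm{W}_{\Lip}$, and on $\set{W \ge R}$ it satisfies $\varphi \ge W/2$. Hence
\[
\int \varphi^p \dd\Pi \ge \int_{\set{g\circ W\ge\rho}} (W/2)^p \dd\Pi \ge a_g(\rho).
\]
For $\mu$: $\varphi$ vanishes unless $2W > R$, i.e.\ $g(2W) > g(R) = \rho$, so
\[
\int \varphi^p \dd\mu \le \E_x\bracket*{W(\xi_t)^p \1{g(2W(\xi_t))>\rho}} \le \rho^{-1}\E_x[H(\xi_t)] \le v(x,t)/\rho.
\]
(If $R < 1$, so that $g^{-1}(\rho)$ is undefined, we simply restrict to $\rho \ge 1$, which gives $R \ge 1$.)

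Applying \eqref{eq:wasserstein lower bound} and taking $(\cdot)^{p\vee1}$ gives, in the case $p \ge 1$,
\[
\norm{W}_{\Lip}\, \WW_p \ge a_g(\rho)^{1/p} - (v/\rho)^{1/p}.
\]
Now choose $\rho$ with $A_g(\rho) = 2^{p\vee1} v(x,t)$, that is, $\rho = A_g^{-1}(2^{p\vee 1}v)$; this $\rho$ is $\ge 1$ because $v \ge 1 \ge A_g(1)$. Then
\[
v/\rho = a_g(\rho)/2^{p\vee1}.
\]
For $p < 1$ the right-hand side above becomes $a_g(1 - 2^{-1}) = a_g/2$, and dividing by $\norm{W}_{\Lip}^p$ yields \eqref{eq:Wasserstein convergence rate lower bound}. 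For $p \ge 1$ we get
\[
a_g^{1/p}\paren*{1 - 2^{-1}} = a_g^{1/p}/2,
\]
so raising to the power $p$ gives $a_g/2^p$ and the constant $\norm{W}_{\Lip}^p$, again matching \eqref{eq:Wasserstein convergence rate lower bound}.

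The main obstacle is bookkeeping the constants so that exactly $2^{p\vee1}$ appears. Specifically, the factor $(W/2)^p$ in condition~(1) must be matched with the shift $R/2$ in the test function, and the argument $2W$ in $H$ must be matched with the support of $\varphi$. The remaining steps are routine.
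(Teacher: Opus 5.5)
Your proposal is correct and follows essentially the same route as the paper. The paper uses the identical test function $f_s=(W-g^{-1}(s)/2)\1{W\ge g^{-1}(s)/2}$, bounds $\int f_s^p\,\dd\Pi$ from below via condition~(1) and $\int f_s^p\,\dd\mu_t$ from above by Markov's inequality with $H$, and then chooses $s=A_g^{-1}(2^{p\vee 1}v(x,t))$ before invoking \eqref{eq:wasserstein lower bound}. Two small points: the exploratory candidate $W\wedge R$ in your second paragraph plays no role and can be dropped, and your remark that $2^{p\vee1}v\ge 2>A_g(1)$, together with continuity of $A_g$, justifies the existence of $\rho$ slightly more carefully than the paper does.
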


\begin{proof}
    The argument follows that of \cite[Lemma~4.8]{BMR2025}, which we adapt to the present setting and extend to $p \in (0,1)$.
    Fix $x \in \XX$ and $t > 0$, and write $\mu_t := \P_x (\xi_t \in \cdot)$.
    For $s \in (1,\infty)$, define $f_{s}:\XX\to \R_+$ by 
    $$
    f_{s}(y) \coloneqq \paren*{W(y)-g^{-1}(s)/2}\1{W(y)\geq g^{-1}(s)/2}\for y\in\XX.
    $$ 
    Since $f_s(y)\geq W(y)/2$ when $W(y)\geq g^{-1}(s)$, Condition~(1) in Lemma~\ref{lemma:lower bound Wasserstein convergence} yields
    \begin{equation}
    \label{eq:Wasserstein_bound}
    \int_{\XX}f_{s}^p \dd \Pi \geq \int_{\{g\circ W\geq s\}} \paren*{W(y)/2}^p \Pi (\dd y) \geq a_g(s) > 0 .
    \end{equation}
    In particular $f_s$ is not identically zero, so $0 < \norm{f_s}_{\Lip} \le \norm{W}_{\Lip}$.
    By Markov's inequality, the definition $H=W^p\cdot g\circ (2W)$ and Condition~(2) in Lemma~\ref{lemma:lower bound Wasserstein convergence} yield
    \begin{equation}
    \label{eq:Wasserstein_growth}
    \int_\XX f_s^p \dd \mu_t
    \leq \E_x \bracket*{W(\xi_t)^p\1{g\circ (2W)(\xi_t)\geq s}}
    \leq  \frac{1}{s} \E_x \bracket*{H(\xi_t)}
    \leq \frac{v(x,t)}{s}.
    \end{equation}

    Let $s = s(x,t)$ be the unique solution of $A_g(s) = 2^{p \vee 1} v(x,t)$, which exists because $A_g$ is strictly increasing with $\lim_{\rho \uparrow \infty} A_g(\rho) = \infty$.
    Thus $s = \paren*{A_g^{-1} \circ (2^{p \vee 1} v)}(x,t)$ and, as $A_g(s) = s a_g(s)$,
    \begin{equation} \label{eq:choice of s}
        \frac{v(x,t)}{s} = \frac{a_g(s)}{2^{p \vee 1}}.
    \end{equation}
    We then apply Proposition~\ref{prop:upper and lower bounds wasserstein distance}.
    Together with $\norm{f_s}_{\Lip} \le \norm{W}_{\Lip}$, \eqref{eq:Wasserstein_bound}, \eqref{eq:Wasserstein_growth} and \eqref{eq:choice of s} gives
    $$
    \norm{W}_{\Lip}^{p \wedge 1} \WW_p(\Pi,\mu_t)
    \ge
    \abs*{\paren*{\int_\XX f_s^p \dd \Pi}^{\frac{1}{p \vee 1}} - \paren*{\int_\XX f_s^p \dd \mu_t}^{\frac{1}{p \vee 1}}
    }
    \ge
    a_g(s)^{\frac{1}{p \vee 1}} - \paren*{\frac{v(x,t)}{s}}^{\frac{1}{p \vee 1}}
    =
    \frac{a_g(s)^{\frac{1}{p \vee 1}}}{2}.
    $$
    Raising both sides to the power $p \vee 1$ and substituting $s = \paren*{A_g^{-1} \circ (2^{p \vee 1} v)}(x,t)$, we obtain \eqref{eq:Wasserstein convergence rate lower bound} as claimed. 
    This completes the proof. 
\end{proof}

\subsection{Proof of ergodicity} \label{subsec:proof ergodicity}

In this subsection we assemble the ingredients of the proof of Theorem~\ref{thm:TLD convergence rates}. 
As discussed in Section~\ref{subsec:intro discussion} and Remark~\ref{remark:convergence rate results}, the upper bounds in $f$-variation are established in \cite[Thm~16]{FR2005}, but under a different, though related, assumption on $\pi$ and for $f$-variations close to $f_p$.
Moreover, \cite[Thm~3.1]{BM2026} establishes the lower bound in \eqref{bound:f var less than lstar} for $d \ge 2$.
Proposition~\ref{prop:f var convergence rate} below states these bounds in the form we need, and its proof is deferred to Appendix~\ref{appendix:upper bound f var convergence rate}.
\begin{proposition} \label{prop:f var convergence rate}
    Let $f_p (x) = 1 + \abs{x}^p$ for $x \in \R^d$, $p \ge 0$. 
    \vspace{3pt}
    \begin{enumerate}
        \item If $\ell  \in \left[0, \lstar\right)$, fix $p \in [0,r)$ and set $\beta_p := (r-p)/(2(1 - \ell (d+r))) > 0$. 
        For any $\eps > 0$ and $x \in \R^d$, there exist constants $C_{-}(x), C_{+}(x) > 0$ such that, for all $t \ge 1$,  
        \begin{equation} \label{bound:f var less than lstar}
            C_{-}(x) t^{- \beta_p - \eps} 
            \le 
            \norm{\P_x (X_t \in \cdot) - \Pi(\cdot)}_{f_p} 
            \le 
            C_{+}(x) t^{- \beta_p + \eps}. 
        \end{equation}
        \item If $\ell = \lstar$, for every $p \in [0, r)$ there exists a constant $c >0$ such that, for every $x \in \R^d$, there is a constant $C(x)>0$ satisfying
        \begin{equation} \label{bound:f var at lstar}
            \norm{\P_x (X_t \in \cdot) - \Pi(\cdot)}_{f_p} 
            \le 
            C(x) e^{- c t} \quad \text{for all} \quad t \ge 1.
        \end{equation}
        \item If $\ell \in (\lstar, \lstarstar]$, there exist constants $c, C>0$ such that, for all $x \in \R^d$,
        \begin{equation} \label{bound:f var greater than lstar TV}
            \norm{\P_x (X_t \in \cdot) - \Pi(\cdot)}_{\TV} 
            \le 
            C e^{- c t} \quad \text{for all} \quad t \ge 1.
        \end{equation}
    \end{enumerate}
\end{proposition}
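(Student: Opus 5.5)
The plan is to obtain all three parts from drift conditions for the Lyapunov functions $\VV_\gamma$ via \eqref{eq:generator on polynomial}, feeding them into the general subgeometric/geometric/uniform ergodicity theorems for diffusions (Douc--Fort--Guillin \cite{DoucSubgeometric09}, Down--Meyn--Tweedie \cite{dmt1995exponential}). The lower bound in part (1) is handled separately through \cite[Thm~3.1]{BM2026} and a direct argument in $d=1$. We first record that compact sets are petite for some skeleton of $X$. This follows from Proposition~\ref{prop:TLD density}: the transition density $p(t,x,y)$ is continuous and positive, so it is bounded below on compacts and compact sets are small. We also record that $X$ is non-explosive for $\ell\le\lstarstar$ by Theorem~\ref{thm:TLD explosion}.

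\textbf{Part (1), $\alpha<0$.} Fix $\gamma\in(p,r-2\alpha)$ close to $r-2\alpha$. By \eqref{eq:generator on polynomial} and \eqref{eq:generator poly error terms}, outside a large ball we have $\AA\VV_\gamma\le -c\,\VV_\gamma^{1+2\alpha/\gamma}$, so the rate function is $\phi(u)=cu^{1-2|\alpha|/\gamma}$, which is concave. The subgeometric machinery of \cite{DoucSubgeometric09} then gives the following. Set $H_\phi(u)=\int_1^u \dd v/\phi(v)\asymp u^{2|\alpha|/\gamma}$. Then $r_\phi=\phi\circ H_\phi^{-1}(t)\asymp t^{\gamma/(2|\alpha|)-1}$. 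Interpolating between the $\phi\circ\VV_\gamma$-variation at rate $1$ and total variation at rate $r_\phi$ (Young-type interpolation with $\Psi_1(u)=u^{\kappa}$) yields convergence in $f$-variation with $f\asymp \VV_\gamma^{(1-2|\alpha|/\gamma)(1-\kappa)}$ at rate $t^{(\gamma/(2|\alpha|)-1)\kappa}$. Choosing $\kappa$ so that $f\asymp|x|^p$ gives rate $t^{(\gamma-p)/(2|\alpha|)}$, which is $t^{\beta_p-\eps}$ after letting $\gamma\uparrow r-2\alpha$ (note $r-2\alpha-p=r-p+2|\alpha|$, so the extra $+1$ cancels), with constant proportional to $\VV_\gamma(x)$.

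For the lower bound I would invoke \cite[Thm~3.1]{BM2026} for $d\ge2$. For $d=1$, I would run the same argument as \cite[Lemma~5.1]{BM2026}: combine the moment growth $\E_x[\VV_\gamma(X_t)]\le \VV_\gamma(x)+Ct$ (Grönwall from the drift above, or the bound from Theorem~\ref{thm:TLD moments}) with the tail $\Pi(|x|\ge\rho)\asymp\rho^{-r}$, choosing $\rho\approx t^{1/(2|\alpha|)}$. That argument is dimension-free, so it covers $d=1$ as well.

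\textbf{Part (2), $\alpha=0$.} Here \eqref{eq:generator on polynomial} gives $\AA\VV_\gamma\le -c\VV_\gamma+b\1{\bar\B_R}$ for every $\gamma\in(0,r)$. By \cite[Thm~5.2]{dmt1995exponential} this yields $\VV_\gamma$-uniform exponential ergodicity with constant $C\VV_\gamma(x)$. Taking $\gamma\in[p,r)$ dominates $f_p$, which gives \eqref{bound:f var at lstar}; the case $p=0$ is contained in it.

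\textbf{Part (3), $\alpha>0$.} Take $\gamma\in(0,r-2\alpha)$, which is nonempty when $\ell<\lstarstar$. Then $\AA\VV_\gamma\le -c\VV_\gamma^{1+2\alpha/\gamma}+b\1{C}$ with a superlinear rate. The expected hitting time of a compact set is then bounded uniformly in $x$, via a Lyapunov function that is bounded and dominated by the drift, such as $1-\VV_\gamma^{-2\alpha/\gamma}$. Uniform ergodicity in TV follows from uniform integrability of return times and petiteness, as in \cite{dmt1995exponential}.

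At $\ell=\lstarstar$ this drift is unavailable, and this is the step I expect to be the main obstacle. There I would instead use Theorem~\ref{thm:TLD moments}(3): $\sup_y\E_y|X_{t_0}|^\gamma<\infty$, so by Markov's inequality $\inf_y\P_y(X_{t_0}\in\bar\B_R)\ge1/2$. Combined with the density lower bound on $\bar\B_R$ (minorisation from Proposition~\ref{prop:TLD density}), this makes the whole space small for the $2t_0$-skeleton, i.e. Doeblin's condition. That gives \eqref{bound:f var greater than lstar TV} at all $\ell\in(\lstar,\lstarstar]$, and the extension to continuous time is by monotonicity of TV distance in $t$.
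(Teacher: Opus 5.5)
Your parts (1) and (2) follow the paper's argument. You use the Douc--Fort--Guillin drift for $\VV_\gamma$ with $\gamma$ just below $r+2\abs{\alpha}$ and a power pair of inverse Young functions. The lower bound comes from \cite[Lemma~5.1]{BM2026}, fed by $\E_x[\VV_\gamma(X_t)]\le\VV_\gamma(x)+Kt$; this lemma is dimension-free, so $d=1$ needs no separate treatment. At $\ell=\lstar$ you use \cite[Thm~5.2(c)]{dmt1995exponential}. There are three small slips:
\begin{itemize}
\item The interpolated exponent is $(\gamma-2\abs{\alpha}-p)/(2\abs{\alpha})$, not $(\gamma-p)/(2\abs{\alpha})$, so you need $\gamma>p+2\abs{\alpha}$ rather than $\gamma>p$.
\item \cite{dmt1995exponential} requires aperiodicity, which you should state; it follows at once from the positive continuous density.
\item Theorem~\ref{thm:TLD moments}(1) gives no linear-in-$t$ moment growth, so only the Gr\"onwall route yields the sharp lower exponent.
\end{itemize}

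Part (3) is where you genuinely diverge, and the divergence rests on a misdiagnosis. Your bounded function $1-\VV_\gamma^{-2\alpha/\gamma}$ equals $1-\abs{x}^{-2\alpha}=1-2\alpha^2\Phi(x)^2$ on $\bar{\B}_2^c$, whatever $\gamma$ is. Up to an affine change it is exactly the paper's $V=M-\Phi^2$.

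The paper computes the drift of this function directly, not through the chain rule from $\AA\VV_\gamma$. By \eqref{eq:generator almost Bessel} with $\varphi(z)=z^2$, $\AA\Phi^2=\theta_\ell^2\delta_\ell\to\deltastar=r/\alpha>0$. This holds equally at $\ell=\lstarstar$, where $\deltastar=2$. Hence $\AA V\le-\lambda V$ outside a ball on all of $(\lstar,\lstarstar]$, and \cite[Thm~5.2(c)]{dmt1995exponential} with $1\le V\le M$ gives uniform total-variation convergence in one step. There is no obstacle at $\lstarstar$; it only appears when you derive the drift from $\VV_\gamma$.

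Your Doeblin route is still correct and not circular, because the finite-moment half of Theorem~\ref{thm:TLD moments}(3) uses only the Bessel Lyapunov function and non-explosion. The uniform moment bound gives $\inf_y\P_y(X_{t_0}\in\bar{\B}_R)\ge 1/2$. The continuous positive density gives a minorisation from $\bar{\B}_R$. Together they yield a global Doeblin condition for the $2t_0$-skeleton, valid on the whole interval. This avoids any aperiodicity or Harris-recurrence checks. The cost is that it imports the time-dependent Bessel machinery of Section~\ref{subsec:moments Bessel}, whereas the paper needs only a one-line generator computation.
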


At $\ell = \lstar$, we deduce the failure of uniform convergence from the non-confinement of~$X$: $\P_x \paren*{\limsup_{t \to \infty} \abs{X_t} = \infty} = 1$ for all $x\in\R^d$; see Lemma~\ref{lemma:unbounded X} in Appendix~\ref{appendix:X is unbounded} below.

Since Proposition~\ref{prop:f var convergence rate}~(3) provides convergence in total variation, our last ingredient upgrades such bounds to the $f_p$-variation. 
The following lemma improves exponential (resp.\ uniform) convergence in total variation to the corresponding mode of convergence in $f_p$-variation, at the cost of a $p$-th moment bound uniform in time (resp.\ in time and space).

\begin{lemma} \label{lemma:TV to fp upgrade}
    Let $\xi = (\xi_t)_{t \ge 0}$ be a Markov process on $\R^d$ with invariant probability measure~$\Pi$, and assume that there exist a constant $c > 0$ and, for each $x \in \R^d$, a constant $C(x) > 0$ such that
    \begin{equation} \label{eq:known TV convergence}
        \norm{\P_x (\xi_t \in \cdot) - \Pi(\cdot)}_{\TV}
        \le
        C(x) e^{- c t}
        \myforall t \ge 1.
    \end{equation}
    Let $\pstar > 0$ and $f_p (x) = 1 + \abs{x}^p$ for $p \ge 0$. 
    Assume further that, for every $p \in [0, \pstar)$,
    \begin{equation} \label{eq:uniform moments for upgrade}
        \int f_{p} \dd \Pi < \infty
        \myand
        \sup_{s \ge 1} \ \E_x \bracket*{\abs{\xi_s}^{p}} < \infty
        \myforall x \in \R^d.
    \end{equation}
    Then, for every $p \in (0, \pstar)$, there exist a constant $\tilde{c} \in \paren*{0 , c \paren*{1 - p/\pstar}}$ and constants $\tilde{C}(x) > 0$, $x \in \R^d$, such that
    \begin{equation} \label{eq:upgraded f var exp conv}
        \norm{\P_x (\xi_t \in \cdot) - \Pi(\cdot)}_{f_p}
        \le
        \tilde{C}(x) e^{- \tilde{c} t}
        \myforall t \ge 1.
    \end{equation}
    If, in addition, 
    $$
    \sup_{x \in \R^d} C(x) < \infty
    \myand 
    \sup_{x \in \R^d} \, \sup_{s \ge 1} \, \E_x \bracket*{\abs{\xi_s}^{p}} < \infty 
    \myforall
    p \in (0, \pstar),
    $$ then, for every $p \in (0, \pstar)$, the constants in \eqref{eq:upgraded f var exp conv} can be chosen so that $\tilde{C} := \sup_{x \in \R^d} \tilde{C}(x) < \infty$.
\end{lemma}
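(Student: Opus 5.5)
Fix $x \in \R^d$, $t \ge 1$ and $p \in (0,\pstar)$, and write $\mu_t := \P_x(\xi_t \in \cdot)$ and $\eta_t := \mu_t - \Pi$. The plan is to interpolate between the total variation bound \eqref{eq:known TV convergence} and a higher moment bound, using Hölder's inequality against $\abs{\eta_t}$. Choose an auxiliary exponent $q \in (p, \pstar)$. Since both $\mu_t$ and $\Pi$ integrate $f_q$, we have $\int f_p \dd\abs{\eta_t} < \infty$. Lemma~\ref{lemma:bound by f-var} then gives
$$
\norm{\eta_t}_{f_p} = \int (1+\abs{y}^p)\, \abs{\eta_t}(\dd y) \le \norm{\eta_t}_{\TV} + \int \abs{y}^p \, \abs{\eta_t}(\dd y).
$$

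The first term is already at most $C(x)e^{-ct}$. For the second term, apply Hölder's inequality with respect to the finite measure $\abs{\eta_t}$, with exponents $q/p$ and $q/(q-p)$:
$$
\int \abs{y}^p \dd\abs{\eta_t} \le \paren*{\int \abs{y}^q \dd\abs{\eta_t}}^{p/q} \abs{\eta_t}(\R^d)^{1-p/q}.
$$
Here $\abs{\eta_t}(\R^d) = \norm{\eta_t}_{\TV}$. We also have $\int \abs{y}^q \dd\abs{\eta_t} \le \E_x\bracket{\abs{\xi_t}^q} + \int \abs{y}^q \dd\Pi$, because $\abs{\eta_t} \le \mu_t + \Pi$. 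By \eqref{eq:uniform moments for upgrade} applied at order $q<\pstar$, this is bounded by $M_q(x) := \sup_{s\ge1}\E_x\bracket{\abs{\xi_s}^q} + \int \abs{y}^q\dd\Pi < \infty$, uniformly in $t \ge 1$. Combining the pieces,
$$
\norm{\eta_t}_{f_p} \le C(x)e^{-ct} + M_q(x)^{p/q}\, C(x)^{1-p/q} e^{-c(1-p/q)t}.
$$
This gives \eqref{eq:upgraded f var exp conv} with $\tilde c := c(1-p/q)$ and $\tilde C(x) := C(x) + M_q(x)^{p/q}C(x)^{1-p/q}$.

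Since $q<\pstar$, we get $\tilde c < c(1-p/\pstar)$. Taking $q$ close to $\pstar$ makes $\tilde c$ as close to that bound as we like, and $\tilde c$ is positive because $q>p$. So $\tilde c$ lies in the stated interval.

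For the uniform statement, take the same $q$. Under the extra assumptions, $\sup_x C(x) < \infty$ and $\sup_x M_q(x) < \infty$, since the invariant moment does not depend on $x$. The formula for $\tilde C(x)$ therefore has a finite supremum over $x$.

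I do not expect a serious obstacle. The only points needing care are that Lemma~\ref{lemma:bound by f-var} requires $\int f_p\dd\abs{\eta_t}<\infty$, which we justified through the order-$q$ moments, and that the moment bound is needed at an order $q$ strictly above $p$. The latter is exactly why the rate deteriorates by the factor $1-p/q$.
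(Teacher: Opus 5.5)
Your proof is correct, and it takes a different route from the paper's.

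The paper never passes to $\abs{\eta_t}$. For a test function $g$ with $\abs{g}\le f_p$, it splits the integral over $\B_R$ and $\B_R^c$:
\begin{itemize}
\item off the ball it uses $f_p\le 2R^{p-\tilde p}f_{\tilde p}$, giving the bound $2R^{p-\tilde p}\paren{\E_x\bracket{f_{\tilde p}(\xi_t)}+\int f_{\tilde p}\dd\Pi}$;
\item on the ball it bounds the integral by $2R^p\norm{\eta_t}_{\TV}$;
\item it then chooses $R(t)=e^{ct/\tilde p}$ to balance the two terms.
\end{itemize}
This gives the same exponent $\tilde c=c(1-p/\tilde p)$, with a constant of the form $4\max\{C(x),\,\sup_{s\ge1}\E_x\bracket{f_{\tilde p}(\xi_s)}+\int f_{\tilde p}\dd\Pi\}$.

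Your H\"older interpolation against $\abs{\eta_t}$ is in effect the optimised version of this truncation. Minimising $R^pA+R^{p-q}B$ over $R$ gives $A^{1-p/q}B^{p/q}$ up to constants, which is exactly your bound. It yields directly the rate-free inequality
\[
\norm{\eta}_{f_p}\le\norm{\eta}_{\TV}+\norm{\eta}_{\TV}^{1-p/q}\paren*{\int\abs{y}^q\dd\abs{\eta}}^{p/q}.
\]
This is shorter and needs no cut-off. It gives an explicit constant in which the moment term enters as $M_q(x)^{p/q}C(x)^{1-p/q}$ rather than through a maximum. It would also transfer any total-variation rate, not only an exponential one, to $f_p$-variation.

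The paper's argument, in turn, works purely with test functions. It needs neither the Jordan decomposition (for $\abs{\eta_t}\le\mu_t+\Pi$) nor Lemma~\ref{lemma:bound by f-var}.

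A minor point: you only use the trivial direction $\norm{\eta}_{f_p}\le\int f_p\dd\abs{\eta}$, which holds without any integrability assumption. So your check that $\int f_p\dd\abs{\eta_t}<\infty$ is unnecessary, though harmless.
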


\begin{proof}
    Fix $p \in (0, \pstar)$ and choose $\tilde p \in (p, \pstar)$.
    Let $x \in \R^d$ and $t \ge 1$. 
    Let $g : \R^d \to \R$ be measurable with $\abs{g} \le f_p$, and let $R \ge 1$. 
    Since $f_{p} \le 2 R^{p - \tilde{p}} f_{\tilde{p}}$ on $\B_R^c$, we obtain 
    $$
    \abs*{\int_{\R^d} g(y) \1{y \in \B_R^c} \, \P_x(\xi_t \in \dd y)}
    \le 
    \int_{\R^d} f_p (y) \1{y \in \B_R^c} \, \P_x(\xi_t \in \dd y)
    \le 
    2 \E_x \bracket*{f_{\tilde{p}} (\xi_t)} R^{p - \tilde{p}}, 
    $$
    and, by the same argument, $\abs*{\int_{\R^d} g(y) \1{y \in \B_R^c} \, \Pi(\dd y)} \le 2 \paren*{\int f_{\tilde{p}} \, \dd \Pi} R^{p - \tilde{p}}$. 
    In the ball $\B_R$, we bound $\abs{g}$ by $1 + R^p$, which yields
    \begin{align*}
        \abs*{\int_{\R^d} g(y) \1{y \in \B_R} \, \paren*{\P_x(\xi_t \in \dd y) - \Pi(\dd y)}}
        &= (1 + R^p) \, \abs*{\int_{\R^d} \frac{g(y) \1{y \in \B_R}}{1 + R^p} \, \paren*{\P_x(\xi_t \in \dd y) - \Pi(\dd y)}} \\
        &\le 2 R^p \, \norm{\P_x (\xi_t \in \cdot) - \Pi(\cdot)}_{\TV}.
    \end{align*}
    Combining these three estimates with \eqref{eq:known TV convergence} and taking the supremum over all such $g$, we obtain
    \begin{align} \label{eq:bound on fp by moments}
        \norm{\P_x (\xi_t \in \cdot) - \Pi(\cdot)}_{f_p}
        \le 2C(x) R^p  e^{-c t} + 2 \bracket*{\E_x \bracket*{f_{\tilde{p}} (\xi_t)} + \int f_{\tilde{p}} \, \dd \Pi} R^{p - \tilde{p}}.
    \end{align}
    Choosing $R = R(t) = \exp \paren*{ct / \tilde{p}} \ge 1$ balances the two terms on the right-hand side of \eqref{eq:bound on fp by moments}, in the sense that $R(t)^p e^{-ct} = R(t)^{p - \tilde p}$. 
    Set 
    $$
    \tilde c = c \paren*{1 - \frac{p}{\tilde p}} \in \paren*{0 , c\paren*{1 - \frac{p}{\pstar}}}
    \myand 
    \tilde C (x) = 4 \max \set*{C(x), \, \sup_{s \ge 1} \E_x \bracket*{f_{\tilde{p}} (\xi_s)} + \int f_{\tilde{p}} \, \dd \Pi},
    $$ 
    where $\tilde C (x) < \infty$ by \eqref{eq:uniform moments for upgrade}.  
    Consequently, \eqref{eq:bound on fp by moments} gives
    \begin{align*}
        \norm{\P_x (\xi_t \in \cdot) - \Pi(\cdot)}_{f_p}
        \le \frac{\tilde C (x)}{2} \paren*{R(t)^p e^{-ct} + R(t)^{p - \tilde p}}
        = \tilde{C}(x) e^{- \tilde c t}.
    \end{align*}
    Since $\tilde{C} = \sup_{x \in \R^d} \tilde{C}(x) < \infty$ when $\sup_{x \in \R^d} C(x) < \infty$ and $\sup_{x \in \R^d} \sup_{s \ge 1} \E_x \bracket*{\abs{\xi_s}^{\tilde p}} < \infty$, the proof is now complete. 
\end{proof}

We are now ready to prove Theorem~\ref{thm:TLD convergence rates}. 

\begin{proof}[Proof of Theorem~\ref{thm:TLD convergence rates}]
    For $\ell \in [0, \lstar]$, a direct application of \eqref{eq:f-var control wasserstein} and Proposition~\ref{prop:f var convergence rate} yields the upper bounds in \eqref{ergodicity:polynomial Wasserstein} and \eqref{ergodic:exponentially Wasserstein}. 

    \smallskip
    \noindent
    \underline{$\ell \in [0,\lstar)$.}  
    Fix $p \in (0,r)$.
    For the lower bound, take $\gamma = r - \delta + 2 \abs{\alpha}$ with $\delta \in (0, r \wedge 2 \abs{\alpha})$, so that $0 < r-\delta < \gamma$ and $\gamma > r$. 
    Let $W: \R^d \to [1, \infty)$ be a function satisfying $W \in C^2 (\R^d)$ and $W(x) = \abs{x}$ for $x \in \bar{\B}_2^c$, which makes $W$ Lipschitz. 
    Define $g(u) = u^{\gamma - p}$ for $u \ge 1$ and $H = W^p (2 W)^{\gamma - p} = 2^{\gamma - p} W^{\gamma}$.

    By \eqref{eq:generator on polynomial} and \eqref{eq:generator poly error terms}, there exists $K > 1$ such that $\AA H \le K$ on $\R^d$.
    Define $\xi \equiv K$ and $\Xi (u) = \int_1^u \xi(s)^{-1} \dd s = (u - 1)/ K$ for $u \ge 1$.
    Since $\AA H \le \xi \circ H$, \cite[Lemma~2.10]{BM2026} gives
    \begin{equation} \label{bound:marginal finite p moments}
        \E_x \bracket*{H(X_t)}
        \le \Xi^{-1} \paren*{\Xi (H(x)) + t}
        = H(x) + K t
        =: v(x,t).
    \end{equation}
    By \eqref{AP}, there exists a constant $C \in (0,1)$ such that for any $\rho \ge 1$
    $$
    \int_{\set*{g \circ W \ge \rho}} \paren*{W(x)/2}^p \, \Pi(\dd x)
    = \int_{\set*{W \ge \rho^{{1}/\paren*{\gamma - p}}}} \paren*{W(x)/2}^p \, \Pi(\dd x)
    \ge C \rho^{- \frac{r - p}{\gamma - p}}.
    $$
    Define $a_g (\rho) = C \rho^{- \frac{r - p}{\gamma - p}}$ and $A_g(\rho) = \rho a_g(\rho) = C \rho^{\frac{\gamma - r}{\gamma - p}}$ for $\rho \ge 1$. 
    As $\gamma > r > p$, $A_g$ is strictly increasing with $\lim_{\rho \uparrow \infty} A_g (\rho) = \infty$. 
    By \eqref{AP} and \eqref{bound:marginal finite p moments}, we have $\Pi, \P_x(X_t \in \cdot) \in \PP_p(\R^d)$. 
    Lemma~\ref{lemma:lower bound Wasserstein convergence} then yields, for all $x \in \R^d$ and $t \ge 1$,
    $$
    \WW_p(\Pi,\P_x (X_t \in \cdot))^{p \vee 1}
    \ge \frac{\paren*{a_g \circ A_g^{-1}\circ (2^{p \vee 1} v)} (x,t)}{2^{p \vee 1} \norm{W}_{\Lip}^p}
    \ge C_{-}(x) t^{-\frac{r-p}{2 \abs{\alpha}} - \delta''}
    $$
    where $ C_{-}(x) >0$ and $\delta'' = \frac{r-p}{2 \abs{\alpha} - \delta} - \frac{r-p}{2 \abs{\alpha}}$.
    This completes the justification of \eqref{ergodicity:polynomial Wasserstein}. 

    \smallskip
    \noindent
    \underline{$\ell = \lstar$.}
    Choose $V: \R^d \to [1, \infty)$ with $V \in C^2(\R^d)$ such that $V(x) = \log \abs{x}$ for $x \in \bar{\B}_3^c$. 
    For $\ell = \lstar$, \eqref{eq:generator on general functions} gives
    $$
    \AA (1/V) (x) = \frac{\theta_\ell (x)^2}{(\log \abs{x})^2} \bracket*{r - \eps_\ell (x) + \frac{2}{\log \abs{x}}}
    \for
    x \in \bar{\B}_3^c.
    $$
    Since $\AA (1/V)$ is continuous, \eqref{eq:generator poly error terms} shows that there exists $K>0$ with $\AA (1/V) \le K (1/V)^2$ on~$\R^d$.
    Setting $\varphi(u) = K u^2$ for $u \in (0,1]$, \cite[Thm~2.7 (a)]{BM2026} then yields, for every $x \in \R^d$, 
    $$
    \paren*{{1}/{V(X_t)} - \int_0^t \varphi \paren*{{1}/{V(X_s)}} \dd s}_{t \ge 0}
    \quad
    \text{is a $(\FF_t)_{t \ge 0}$-supermartingale under $\P_x$.}
    $$
    By Lemma~\ref{lemma:unbounded X}, we also have $\limsup_{t \to \infty} V(X_t) = \infty$ $\P_x$-almost surely.

    Fix $q \in (1/2, 1)$ and choose $R_q > \max \set*{3, \sup_{y \in \bar{\B}_3} V(y)}$ with $\Pi \paren*{\B_{e^{R_q}}} \ge q$.
    For $t > 0$, set $\rho_{q,t} = \frac{2 K t + R_q}{1-q}$, so that $\rho_{q,t} > R_q$ and $t < \tilde t := \frac{1-q}{2 \rho_{q,t} \, \varphi \paren{1/ \rho_{q,t} }} = t + \frac{R_q}{2K}$.
    Then \cite[Lemma~4.2]{BM2026} gives that
    $$
    \P_x \paren*{\tau_{R_q} \ge \tilde t \,} \ge q,
    \mywhere 
    \tau_{R_q} = \inf \set{s \ge 0: V(X_s) < R_q},
    $$
    for any $x \in \R^d$ with $\abs{x} = \exp \paren*{2\rho_{q,t}/(1-q)}$. 
    As $R_q > \sup_{y \in \bar{\B}_3} V(y)$, we have 
    $$
    q
    \le
    \P_x \big( \inf_{s \le \tilde t} V(X_s) \ge R_q \big)
    \le 
    \P_x \big(\inf_{s \le t} \, \log \abs{X_s} \ge R_q \big)
    \le
    \P_x \paren*{\abs{X_t} \ge e^{R_q}},
    $$
    and consequently,
    $$
    \norm{\P_x \paren*{X_t \in \cdot} - \Pi}_{\TV}
    \ge
    2 \paren*{\Pi \paren*{\B_{e^{R_q}}} - \P_x \paren*{\abs{X_t} < e^{R_q}}}
    \ge
    2(2q - 1). 
    $$
    Letting $q \uparrow 1$ then gives $\sup_{x \in \R^d} \norm{\P_x (X_t \in \cdot) - \Pi(\cdot)}_{\TV} = 2$ for all $t > 0$, which completes the proof of Theorem~\ref{thm:TLD convergence rates} (2). 

    \smallskip
    \noindent
    \underline{$\ell \in (\lstar, \lstarstar]$.} 
    Under \eqref{AP}, $\int f_p \dd \Pi < \infty$ for every $p \in [0,r)$, and Theorem~\ref{thm:TLD moments} gives the bound \eqref{eq:moments uniform in time and space above lstar} on the $p$-th moments, uniform in space and time. 
    Lemma~\ref{lemma:TV to fp upgrade} extends \eqref{bound:f var greater than lstar TV} in Proposition~\ref{prop:f var convergence rate} to uniform convergence in the $f_p$-variation distance for all $p \in [0,r)$ which establishes \eqref{ergodic:uniformly f var}.
    The upper bounds on ${\WW_p (\P_x (X_t \in \cdot), \Pi)^{p \vee 1}}$ again follow directly from \eqref{eq:f-var control wasserstein}. 
    This proves Theorem~\ref{thm:TLD convergence rates}~(3). 
\end{proof}

\appendix

\section{Proofs of auxiliary results} 

\subsection{Negative moments of Bessel processes} \label{appendix:proof negative moments of Bessel processes}
We now present the properties of the closed-form expression for the negative moments of Bessel processes in Lemma~\ref{lemma:properties of Bessel neg moments}.
\begin{proof}[Proof of Lemma~\ref{lemma:properties of Bessel neg moments}]
    Fix $s > 0$ and $z \ge 0$, and set $\lambda = z^2/s$, so that
    $$
    \frac{U_{\delta, \eta} (s,z)}{\paren*{s+z^2}^{-\eta/2}}
    =
    \frac{K_{\delta, \eta} M \paren*{\frac{\eta}{2}, \frac{\delta}{2}, -\frac{\lambda}{2}}}{(1+\lambda)^{-\eta/2}}
    =: G(\lambda).
    $$
    Kummer's transformation \cite[(13.1.27)]{AbramowitzStegun1972} gives $M \paren*{\frac{\eta}{2}, \frac{\delta}{2}, -\frac{\lambda}{2}} = e^{-\lambda/2} M \paren*{\frac{\delta - \eta}{2}, \frac{\delta}{2}, \frac{\lambda}{2}} >0$ for all $\lambda \ge 0$.
    Moreover, since $0<\eta < \delta$, \cite[(13.1.5)]{AbramowitzStegun1972} yields
    $$
    M \paren*{\frac{\eta}{2}, \frac{\delta}{2}, -\frac{\lambda}{2}}
    \sim
    \frac{\Gamma \paren*{\delta/2}}{\Gamma \paren*{\paren{\delta - \eta}/{2}}} \paren*{\frac{\lambda}{2}}^{-\eta/2}
    = K_{\delta, \eta}^{-1} \lambda^{-\eta/2}
    \as
    \lambda \to \infty,
    $$
    so that $G(\lambda) \to 1$ as $\lambda \to \infty$.
    Since $G: [0, \infty) \to (0, \infty)$ is continuous with $\lim_{\lambda \to \infty} G(\lambda) = 1$, it follows that $c_{\delta, \eta} := \inf_{\lambda \ge 0} G(\lambda) > 0$ and $C_{\delta, \eta} := \sup_{\lambda \ge 0} G(\lambda) < \infty$, establishing \eqref{eq:Bessel neg moments two sided bounds}.

    The identities \eqref{eq:Bessel neg moments derivatives} follow from $\frac{\dd}{\dd z} M(a,b,z) = \frac{a}{b} M(a+1, b+1, z)$ \cite[(13.4.8)]{AbramowitzStegun1972}, and substituting them into \eqref{eq:Bessel neg moments PDE} reduces it to Kummer's equation \cite[(13.1.1)]{AbramowitzStegun1972}.

    For $s,z > 0$, the first two parts of the lemma give
    \begin{align*}
        \abs*{\partial_s U_{\delta, \eta}(s,z)} 
        &\le \frac{\eta \delta}{2} U_{\delta+2, \eta+2} (s,z)
        + \frac{\eta (\eta+2)}{2} z^2 U_{\delta+4, \eta+4} (s,z) \\
        &\le \frac{\eta \delta}{2} C_{\delta+2, \eta+2} \paren*{s+z^2}^{-(\eta+2)/2} + \frac{\eta (\eta+2)}{2} C_{\delta+4, \eta+4} z^2 \paren*{s+z^2}^{-(\eta+4)/2} \\
        &\le \paren*{\frac{\eta \delta}{2} C_{\delta+2, \eta+2} + \frac{\eta (\eta+2)}{2} C_{\delta+4, \eta+4}} \paren*{s+z^2}^{-(\eta+2)/2}
    \end{align*}
    and 
    $$
    \frac{\abs*{\partial_z U_{\delta, \eta} (s,z)}}{z}
    = \eta U_{\delta+2, \eta+2} (s,z)
    \ge \eta c_{\delta+2, \eta+2} \paren*{s+z^2}^{-(\eta+2)/2},
    $$
    which completes the proof. 
\end{proof}

\subsection{Integral representation of the \texorpdfstring{$f$}{f}-variation} \label{appendix:proof f-variation representation}
We prove Lemma~\ref{lemma:bound by f-var}, which expresses the $f$-variation of a signed measure $\eta$ as the integral of $f$ against its total variation measure $\abs{\eta}$.
\begin{proof}[Proof of Lemma~\ref{lemma:bound by f-var}]
    By the Hahn decomposition theorem, there exist two subsets $A_1, A_2 \in \EE$ such that $A_1 \cup A_2 = E$ and $A_1 \cap A_2 = \emptyset$. 
    Moreover, $\eta^{+} (\cdot) := \eta(A_1 \cap \cdot)$ and $\eta^{-} (\cdot) := -\eta(A_2 \cap \cdot)$ are two non-negative measures on $(E, \EE)$. 
    Let $\tilde{f} = f \1{A_1} - f \1{A_2}$. 
    As $\abs{\tilde{f}} = f$, we have 
    \begin{align*}
        \norm{\eta}_f
        \ge \abs*{\int \tilde{f} \dd \eta}
        = \abs*{\int f \1{A_1} \dd \eta - \int f \1{A_2} \dd \eta}
        = \int f \dd \eta^+ + \int f \dd \eta^-
        = \int f \dd \abs{\eta}.
    \end{align*}
    Conversely, $\abs{\int g \dd \eta} \le \int \abs{g} \dd \abs{\eta} \le \int f \dd \abs{\eta}$ for every measurable $g$ with $\abs{g} \le f$, so $\norm{\eta}_f = \int f \dd \abs{\eta}$.
\end{proof}

\subsection{Convergence rates in \texorpdfstring{$f$}{f}-variation distances} \label{appendix:upper bound f var convergence rate}
The proof of Theorem~\ref{thm:TLD convergence rates} relies on Foster–Lyapunov type drift conditions, as presented in \cite{DoucSubgeometric09, dmt1995exponential,BM2026}.
Applying these drift conditions relies crucially on the concept of petite sets.
Following the definition in \cite[p.~500]{mt1993stability2}, a set $C \in \BB (\R^d)$ is called a \textit{petite set} if there exists a probability measure $a$ on $(\R_+, \BB (\R_+))$ such that 
$$
\inf_{x \in C} \int_0^\infty \P_x (X_t \in \cdot) a(\dd t) \ge \eta_C (\cdot)
$$
where $\eta_C$ is a non-trivial $\sigma$-finite measure on $(\R^d, \BB (\R^d))$. 
\begin{proposition} \label{prop:TLD MC regularity}
    Any skeleton chain associated with $X$ is irreducible, and every compact set is a closed petite set for $X$. 
\end{proposition}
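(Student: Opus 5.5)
The plan is to exploit the strictly positive, continuous transition density supplied by Proposition~\ref{prop:TLD density}. Throughout, $\ell \in [0,\lstarstar]$, so $X$ is non-explosive by Theorem~\ref{thm:TLD explosion}. Proposition~\ref{prop:TLD density} then gives a continuous $p:(0,\infty)\times\R^d\times\R^d\to(0,\infty)$ with $\P_x(X_t\in\dd y)=p(t,x,y)\,\dd y$.

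\emph{Irreducibility of skeletons.} Fix $h>0$ and consider the $h$-skeleton $(X_{nh})_{n\ge0}$. For any $A\in\BB(\R^d)$ with positive Lebesgue measure and any $x\in\R^d$, positivity of $p$ gives
\[
\P_x(X_h\in A)=\int_A p(h,x,y)\,\dd y>0 .
\]
Hence the skeleton is $\mathrm{Leb}$-irreducible. Equivalently, it is $\Pi$-irreducible, since $\pi>0$ makes $\Pi$ and Lebesgue measure mutually absolutely continuous.

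\emph{Compact sets are petite.} Let $C\subset\R^d$ be compact. Take the sampling distribution $a=\delta_1$, and fix a reference ball $\bar{\B}_1$. By continuity and positivity of $p(1,\cdot,\cdot)$ on the compact set $C\times\bar{\B}_1$, we have $m:=\inf_{x\in C,\,y\in\bar{\B}_1}p(1,x,y)>0$. Therefore
\[
\inf_{x\in C}\P_x(X_1\in\cdot)\ge m\,\mathrm{Leb}(\cdot\cap\bar{\B}_1)=:\eta_C(\cdot),
\]
which is a non-trivial finite measure. So $C$ is petite (in fact small), and it is closed because it is compact.

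\emph{Feller property.} The statement is about closed petite sets, which in the Meyn--Tweedie framework is usually used together with the Feller property. I would note that this is already assumed in the paper, and in any case it follows from the strong Feller property, since $x\mapsto\int g(y)p(t,x,y)\,\dd y$ is continuous for bounded $g$ by continuity of $p$ and dominated convergence on compacts.

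\emph{Main obstacle.} The only delicate point is justifying joint continuity and strict positivity of $p$ on $(0,\infty)\times\R^d\times\R^d$, including for $\ell=\lstarstar$. This is exactly what Proposition~\ref{prop:TLD density} provides via \cite[Thm~1.1]{Kent1978}, using non-explosion and the H\"older regularity of $\nabla^2\pi$. If joint continuity in $x$ were not available uniformly, I would instead combine Chapman--Kolmogorov at time $1/2$ with lower semicontinuity (Fatou) to obtain the uniform lower bound $m>0$ on $C\times\bar{\B}_1$.
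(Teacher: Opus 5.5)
Your proof is correct. For petiteness it matches the paper, which just cites continuity of $p(t,\cdot,\cdot)$. You spell out the step it leaves implicit: positivity and joint continuity of $p$ on the compact set $C\times\bar{\B}_1$ give $m>0$, hence a minorisation with $a=\delta_1$, so $C$ is even small.

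For irreducibility of the skeletons you take a different and more direct route. The paper follows \cite[Prop.~15]{FR2005} and deduces it from the long-time limit \eqref{eq:convergence to inv measure}. That limit gives $\P_x(X_{nh}\in A)\to\Pi(A)>0$, and hence $\P_x(X_{nh}\in A)>0$ for all large $n$. You instead use strict positivity of $p(h,x,\cdot)$ to get $\P_x(X_h\in A)>0$ already at the first skeleton step.

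Your version gives the stronger one-step Lebesgue-irreducibility and does not use the ergodic limit at all. The paper's version needs only the convergence statement and transplants the argument of \cite{FR2005} verbatim. Both rest on the same input from Kent's theorem via Proposition~\ref{prop:TLD density}.

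Your aside on the Feller property is not needed, since the paper assumes $X$ is Feller, and as written it is slightly loose. Continuity of $p$ plus dominated convergence on compact sets does not control $\int_{K^c} g(y)\,p(t,x,y)\,\dd y$ uniformly in $x$. The clean argument is Scheffé's lemma. If $x_n\to x$, then $p(t,x_n,\cdot)\to p(t,x,\cdot)$ pointwise, and each integrates to $1$ by non-explosion. Hence the convergence holds in $L^1(\dd y)$, which gives continuity of $x\mapsto\int g(y)\,p(t,x,y)\,\dd y$ for every bounded measurable $g$.
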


\begin{proof}
    As in the proof of \cite[Prop.~15]{FR2005}, we conclude the irreducibility of the skeleton chains from \eqref{eq:convergence to inv measure}. 
    The petiteness of compact subsets comes from the continuity of $p(t, \cdot, \cdot)$. 
\end{proof}

We next introduce two further concepts needed for the proof of exponential ergodicity (see, e.g., \cite[pp.~490--491]{mt1993stability2}).
We say that $X$ is
\begin{itemize}
    \item \textit{$\psi$-irreducible} if $\psi$ is a $\sigma$-finite measure on $(\R^d, \BB (\R^d))$ such that for any $A \in \BB (\R^d)$ with $\psi(A) > 0$, we have $\int_0^\infty \P_x (X_t \in A) \dd t > 0$ for all $x \in \R^d$;
    \item \textit{positive Harris recurrent} if $X$ has a finite invariant measure and there exists a non-zero $\sigma$-finite measure $\varphi$ on $(\R^d, \BB (\R^d))$ such that for any $A \in \BB (\R^d)$ with $\varphi(A) > 0$, we have $\P_x (\int_0^\infty \1{X_t \in A} \dd t = \infty)=1$ for all $x \in \R^d$.
\end{itemize}

\begin{proof}[Proof of Proposition~\ref{prop:f var convergence rate}]
    We apply the Foster–Lyapunov type drift conditions in \cite{DoucSubgeometric09, BM2026, dmt1995exponential}.
    
    \smallskip
    \noindent \underline{Case $\ell \in [0,\lstar)$}  
    Fix $p \in [0,r)$, choose $\delta \in \paren*{0, (r-p) \wedge 2\abs{\alpha}}$, and set $\gamma = r - \delta + 2\abs{\alpha}$ and $\beta = \frac{p}{r-\delta} \in [0,1)$. 
    We further assume that $\VV_\gamma(x) \le 1 + \abs{x}^{\gamma}$ for all $x \in \R^d$.

    For the upper bound, by \eqref{eq:generator on polynomial} and \eqref{eq:generator poly error terms}, there exist $R > 2$ and $\lambda > 0$ such that $\AA \VV_\gamma \le - \lambda \VV_\gamma^{\paren{r - \delta}/\gamma}$ on $\bar\B_R^c$.
    With $\phi (u) = \lambda u^{\paren{r - \delta}/\gamma}$ and $K = 1+ \sup_{x \in \bar{\B}_R} \abs{\AA \VV_\gamma(x) + \phi \circ \VV_\gamma(x)} < \infty$, we obtain 
    $$
    \AA \VV_\gamma \le - \phi \circ \VV_\gamma + K \1{\bar{\B}_R} \myon \R^d.
    $$ 
    Since $\bar{\B}_R$ is a closed petite set by Proposition~\ref{prop:TLD MC regularity}, this establishes the drift condition $\mathbf{D}(\bar{\B}_R, \VV_\gamma, \phi, K)$ of \cite[p.~901]{DoucSubgeometric09}, via \cite[Thm~3.4]{DoucSubgeometric09}.
    Defining
    $$
    \Psi_1 (u) = 2^{-(p+1)} \lambda^{\beta} u^{1-\beta}
    \myand
    \Psi_2 (u) = 2^{p+1} \lambda^{-\beta} u^{\beta}
    \for u \ge 0,
    $$
    one checks that $(\Psi_1, \Psi_2)$ forms a pair of inverse Young functions in the sense of \cite[Sec.~3.1, (3.3)]{DoucSubgeometric09}.
    Let $\varphi = \phi \, \circ \, \VV_\gamma$. 
    One computes that 
    \begin{equation} \label{eq:bound fp}
        \Psi_2 \circ \varphi (x) = 2^{p+1} \VV_\gamma(x)^{p/\gamma}
        \ge
        2^{p+1} \1{x \in \bar{\B}_2} + 2^{p+1} \abs{x}^p \1{x \in \bar{\B}_2^c}
        \ge
        f_p(x)
        \myforall
        x \in \R^d.
    \end{equation}
    Since every skeleton chain of $X$ is irreducible and $\lim_{u \to \infty} \phi'(u) = 0$ and \eqref{eq:bound fp} holds, \cite[Thm~3.2]{DoucSubgeometric09} yields that there exists a constant $C_+(x) >0$ for each $x \in \R^d$ such that, for all $t \ge 1$,
    $$
    \norm{\P_x (X_t \in \cdot) - \Pi(\cdot)}_{f_p}
    \le
    \norm{\P_x (X_t \in \cdot) - \Pi(\cdot)}_{\Psi_2 (\varphi) \vee 1}
    \le
    C_+(x) t^{-\frac{r-p}{2 \abs{\alpha}} + \frac{\delta}{2 \abs{\alpha}}}.
    $$
    
    For the lower bound, let $h= \id$ be the identity function on $[1, \infty)$.
    Define $\xi \equiv K \ge 1$ and $\Xi (u) = \int_1^u \xi(s)^{-1} \dd s = (u - 1)/K$ for $u \ge 1$.
    Since $\AA \VV_\gamma \le \xi \circ \VV_\gamma$, by \cite[Lemma~2.10]{BM2026}, we obtain
    $$
    \E_x \bracket*{h \circ \VV_\gamma(X_t)} \le \Xi^{-1} \paren*{\Xi (\VV_\gamma(x)) + t} = \VV_\gamma(x) + K t.
    $$
    We define $f_\star (u) = u^{p/ \gamma}$ and $g (u) = h(u)/f_\star (u) = u^{(\gamma - p)/ \gamma}$ where $u \ge 1$.
    By \eqref{AP}, there exists a constant $\kappa > 0$ such that the tail of $\Pi$ (with respect to $\VV_\gamma$) satisfies
    $$
    \Pi (\VV_\gamma > \rho)
    \ge \kappa \rho^{- r /\gamma}
    =: L(\rho), 
    \quad \rho \ge 1. 
    $$
    Define, for $\rho \ge 1$, 
    $$ 
    a_g(\rho) = f_\star \paren*{g^{-1} (\rho)} L \paren*{g^{-1}(\rho)} = \kappa \rho^{- \frac{r-p}{\gamma - p}} 
    \myand
    A_g(\rho) = \rho a_g(\rho) = \kappa \rho^{\frac{2 \abs{\alpha} - \delta}{(r - p) + 2 \abs{\alpha} - \delta}}.
    $$  
    Since $\delta < 2 \abs{\alpha}$, $A_g$ is a strictly increasing function and $A_g(\rho) \uparrow \infty$ as $\rho \uparrow \infty$. 
    Applying \cite[Lemma~5.1]{BM2026}, we conclude that, for all $x \in \R^d$ and $t \ge 1$,
    \begin{align*}
              \norm{\P_x (X_t \in \cdot) - \Pi(\cdot)}_{f_\star\circ\mathcal V_\gamma}
        \ge \frac{1}{2} (a_g \circ A_g^{-1}) (2(\VV_\gamma(x) + K t))
        \ge C_{-}(x) t^{-\frac{r-p}{2 \abs{\alpha}} - \delta''}
    \end{align*}
    where $ C_{-}(x) >0$ and $\delta'' = \frac{r-p}{2 \abs{\alpha} - \delta} - \frac{r-p}{2 \abs{\alpha}}$.
    As $p/ \gamma \in [0,1)$ and $\VV_\gamma(x) \le 1 + \abs{x}^{\gamma}$ for all $x \in \R^d$, together with \eqref{fact:subadditivity}, we have 
    $$
    f_\star \circ \VV_\gamma (x) \le (1+\abs{x}^\gamma)^{p/\gamma} \le 1+ \abs{x}^p = f_p(x)
    $$
    and thus $\norm{\cdot}_{f_p} \ge \norm{\cdot}_{f_\star\circ\mathcal V_\gamma}$, which completes the proof of \eqref{bound:f var less than lstar}.

    \smallskip
    \noindent \underline{Case $\ell = \lstar$}
    Let $p \in (0,r)$ and $V_p: \R^d \to [1, \infty)$ be a function with $V_p \in C^2(\R^d)$, $V_p(x) = \abs{x}^p + 2$ on $\bar\B_2^c$, and $V_p \ge f_p$.
    Combining \eqref{eq:generator on polynomial} and \eqref{eq:generator poly error terms} shows that there exist $\lambda >0$ and $R>2$ large enough that $\AA V_p \le - \lambda V_p$ on $\bar{\B}_R^c$. 
    Letting $K = \sup_{x \in \bar{\B}_R} \abs{\AA V_p(x) + \lambda V_p(x)} < \infty$ gives $\AA V_p \le - \lambda V_p + K \1{\bar{\B}_R}$ on $\R^d$. 
    As in the case $\ell \in [0, \lstar)$, $\bar{\B}_R$ is a closed petite set by Proposition~\ref{prop:TLD MC regularity}. 

    Applying \cite[Thm~5.2 (c)]{dmt1995exponential} requires verifying that $X$ is \textit{aperiodic} in the sense of \cite[p.~1675]{dmt1995exponential}.
    Combining \cite[Thm~4.2]{mt1993stability3} with $\AA V_p \le - \lambda V_p + K \1{\bar{\B}_R}$ on $\R^d$ shows that $X$ is positive Harris recurrent.
    Since every skeleton chain of $X$ is irreducible, \cite[Prop.~6.1]{mt1993stability2} then gives a petite set ${A} \in \BB(\R^d)$, a constant $T>0$, and a non-trivial measure $\eta_{A}$ with $\eta_{A} ({A}) > 0$ such that
    $
    \P_x (X_t \in \cdot) \ge \eta_{A} (\cdot),
    $ 
    for all $x \in {A}$ and $t \ge T$,
    and in particular $\P_x (X_t \in {A}) > 0$ for all $x \in {A}$ and $t \ge T$.
    As $X$ is $\Pi$-irreducible and aperiodic, \cite[Thm~5.2 (c)]{dmt1995exponential} yields constants $C, c>0$ such that
    \begin{equation*}
        \norm{\P_x (X_t \in \cdot) - \Pi(\cdot)}_{V_p} \le C V_p(x) e^{-c t} \myforall x \in \R^d \myand t \ge 0
    \end{equation*}
    which proves~\eqref{bound:f var at lstar} for $p\in(0,r)$ because $f_p \le V_p$. The case $p=0$ follows by dominated convergence with some $f_q$ for $q\in(0,r)$.

    \smallskip
    \noindent \underline{Case $\ell \in (\lstar, \lstarstar]$}  
    In this case, $\alpha = \ell (d+r) - 1 \in (0, r/2]$.
    Let $\Phi: \R^d \to (0, \infty)$ be a function with $\Phi \in C^2(\R^d)$ and $\Phi (x) = \frac{1}{\sqrt{2} \alpha} \abs{x}^{-\alpha}$ for $x \in \bar{\B}_2^c$, and set
    $$
    M = \sup_{x \in \R^d} \Phi(x)^2 + 1 < \infty
    \myand
    V(x) = M - \Phi(x)^2 \for x \in \R^d.
    $$
    By \eqref{eq:generator on polynomial} and \eqref{eq:generator poly error terms}, there exist $\lambda >0$ and $R>2$ large enough that $\AA V \le - \lambda V$ on $\bar{\B}_R^c$. 
    Letting $K = \sup_{x \in \bar{\B}_R} \abs{\AA V(x) + \lambda V(x)} < \infty$, we obtain $\AA V \le - \lambda V + K \1{\bar{\B}_R}$ on $\R^d$. 
    As in the case $\ell = \lstar$, applying \cite[Thm~5.2 (c)]{dmt1995exponential} then yields constants $C', c > 0$ with $\norm{\P_x (X_t \in \cdot) - \Pi(\cdot)}_{V} \le C' V(x) e^{-c t}$.
    Since $1 \le V \le M$ on $\R^d$, taking $C = C' M$ gives
    \begin{equation} \label{eq:uniform in TV above lstar}
        \norm{\P_x (X_t \in \cdot) - \Pi(\cdot)}_{\TV}
        \le
        C e^{-c t}
        \myforall x \in \R^d \myand t \ge 0. 
    \end{equation}
    This proves \eqref{bound:f var greater than lstar TV} and completes the proof of this proposition. 
\end{proof}

\subsection{Non-confinement of tempered Langevin diffusions} \label{appendix:X is unbounded}
We prove that the trajectories of tempered Langevin diffusions are not containd a in compact subset of $\R^d$ with positive probability. 

\begin{lemma} \label{lemma:unbounded X}
    When $\ell \in [0,\lstarstar]$, $\limsup_{t \to \infty} \abs{X_t} = \infty$ $\P_x$-almost surely for every $x \in \R^d$. 
\end{lemma}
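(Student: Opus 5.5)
The plan is to use recurrence of $X$ together with positivity of the transition density. By Proposition~\ref{prop:TLD density}, for $\ell \in [0,\lstarstar]$ the process is non-explosive and has $\Pi$ as its unique invariant probability measure. Moreover, its skeleton chains are irreducible with compact sets petite (Proposition~\ref{prop:TLD MC regularity}). From this I will first deduce that $X$ is positive Harris recurrent with respect to Lebesgue measure, or equivalently $\Pi$. A direct route is to observe that the $h$-skeleton $(X_{nh})_{n\ge0}$ is a $\Pi$-irreducible chain with invariant probability $\Pi$. Such a chain is positive recurrent, and it is Harris recurrent from every starting point once we know the transition kernel $P^h(x,\cdot)$ has a continuous positive density. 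Indeed, the set where Harris recurrence fails is $\Pi$-null and absorbing-complement, and since $P^h(x,\cdot)\ll\Pi$ for every $x$, one step lands in the good set almost surely.

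Next fix $N\in\N$ and the open set $A_N=\bar{\B}_N^c$, which has $\Pi(A_N)>0$ because $\pi>0$. Harris recurrence of the skeleton gives $\P_x(X_{nh}\in A_N \text{ infinitely often})=1$ for every $x\in\R^d$. Hence $\P_x(\limsup_{t\to\infty}\abs{X_t}\ge N)=1$. Intersecting over $N\in\N$ gives $\limsup_{t\to\infty}\abs{X_t}=\infty$ $\P_x$-a.s.

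The main obstacle is justifying Harris recurrence for every, not just $\Pi$-almost every, starting point, without drift conditions at $\ell=\lstarstar$, where Khasminskii-type Lyapunov functions are delicate (Remark~\ref{remark:Khasminskii fails at lstarstar}). I would handle this with the standard argument. Let $h(x)=\P_x(X_{nh}\in A_N\text{ i.o.})$. Then $h$ is invariant for $P^h$, so $h(x)=\int p(h,x,y)h(y)\,\dd y$, and by the ergodic theorem for the stationary skeleton chain (ergodic since $\Pi$ is the unique invariant measure), $h=1$ $\Pi$-a.e. Positivity of $p(h,x,\cdot)$ then forces $h(x)=1$ for all $x$.

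An alternative that avoids Harris theory is to argue by contradiction. If $\P_x(\sup_t\abs{X_t}<N)>0$ for some $N$, then on this event the occupation measure of $\B_N$ stays near full. By the Markov property and the convergence \eqref{eq:convergence to inv measure}, this contradicts $\Pi(\bar{\B}_N^c)>0$ through $\E_x[\frac1T\int_0^T\1{\abs{X_s}>N}\dd s]\to\Pi(\bar{\B}_N^c)$ combined with a $0$--$1$ argument. I would use this route only if the first one has gaps.
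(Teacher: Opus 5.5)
Your first route is correct, but it differs from the paper's. The paper uses neither the invariant measure nor recurrence theory. It fixes $M\in\N$ and $T>0$, and uses continuity and strict positivity of the transition density to obtain the uniform escape bound $\delta_M=\inf_{\abs{y}\le M}\P_y(\abs{X_T}>M)>0$. It then applies L\'evy's conditional Borel--Cantelli lemma to $E_n=\{\abs{X_{nT}}>M\}$: on the event that $E_n$ occurs only finitely often, $\P_x(E_n\mid\FF_{(n-1)T})\ge\delta_M$ for all large $n$, so the conditional probabilities are not summable, which forces $E_n$ to occur infinitely often.

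You instead show that $h(x)=\P_x(X_{nh}\in A_N\ \text{i.o.})$ is harmonic for the skeleton kernel $P^h$, and that it equals $1$ $\Pi$-a.e.\ by Birkhoff's theorem for the stationary skeleton. It then equals $1$ everywhere because $P^h(x,\cdot)\ll\Pi$ and has mass one by non-explosion; strict positivity of the density is not actually needed in that step. Your argument yields more, namely Harris recurrence of every skeleton for every set of positive $\Pi$-measure. The price is global input (existence, uniqueness and ergodicity of $\Pi$ for the skeleton), whereas the paper needs only a local uniform lower bound on a compact ball.

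One step needs a line you did not supply. Proposition~\ref{prop:TLD density} gives uniqueness of the invariant probability for $X$, not for the $h$-skeleton. Uniqueness for the skeleton follows from \eqref{eq:convergence to inv measure}: if $\mu$ is $P^h$-invariant, then $\mu(A)=\int\P_y(X_{nh}\in A)\,\mu(\dd y)\to\Pi(A)$ by dominated convergence, so $\mu=\Pi$. This gives extremality and hence ergodicity of the stationary skeleton.

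Your fallback route is unnecessary, and as sketched it is incomplete. It targets $\{\sup_t\abs{X_t}<N\}$ rather than eventual confinement. Moreover, Ces\`aro convergence of expected occupation times only shows that eventual confinement in $\bar{\B}_N$ has probability at most $\Pi(\bar{\B}_N)<1$ from every starting point. The $0$--$1$ step you allude to, which is exactly what $h\equiv1$ provides, would still have to be carried out.
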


\begin{proof}[Proof of Lemma~\ref{lemma:unbounded X}]
    Fix $M \in \N$ and $T > 0$. 
    When $X$ is non-explosive, $X$ is Feller and its transition density is continuous and strictly positive by Proposition~\ref{prop:TLD density}. 
    Thus, $\delta_M := \inf_{\abs{y} \le M} \P_y (\abs{X_T} > M) > 0$. 
    
    Let $x \in \R^d$. 
    Define $E_n = \set{\abs{X_{nT}} > M}$ and $\GG_n = \FF_{n T}$ for $n \in \Z_{\ge 0}$. 
    On $E_{n-1}^c = \set{\abs{X_{(n-1)T}} \le M}$, applying the Markov property gives $\P_x (E_n \vert \GG_{n-1}) = \P_{X_{(n-1)T}} (E_1) \ge \delta_M$ up to a $\P_x$-null set. 
    Then, on $\set{E_n \text{ f.o.}}$, the event $E_{n}^c$ holds for all $n$ large enough and thus 
    $$
    \set{E_n \text{ f.o.}} \subset \big\{ \sum_{n=1}^\infty \P_x (E_n \vert \GG_{n-1}) = \infty \big\}
    \quad
    \text{up to a $\P_x$-null set}. 
    $$
    However, by Lévy's extension of the Borel–Cantelli lemma (see e.g.\ \cite[Cor.~9.21]{KallenbergFOP2021}), we have 
    $$
    \big\{\sum_{n=1}^\infty \P_x (E_n \vert \GG_{n-1}) = \infty \big\} = \set{E_n \text{ i.o.}} \quad \P_x\text{-almost surely}.
    $$ 
    Therefore, $\P_x (\set{E_n \text{ f.o.}}) = 0$ and $\P_x \paren*{\limsup_{n \to \infty} \abs{X_{nT}} \ge M} = 1$. 
    Intersecting $\set{\limsup_{n \to \infty} \abs{X_{nT}} \ge M}$ over $M \in \N$ implies $\P_x \paren*{\limsup_{t \to \infty} \abs{X_t} = \infty} = 1$, which completes the proof. 
\end{proof}

\section*{Funding}
AM was also supported  the EPSRC grants EP/V009478/1 and EP/W006227/1. HX gratefully acknowledges funding from the Statistics CDT at the University of Warwick.

\section*{AI statement}
The main ideas underlying the proofs of the principal results were developed by the authors without AI assistance. During the writing process, AI tools (GPT-5.6 Sol and Claude Opus 5.5) were used for checking technical arguments, identifying relevant references, and correcting spelling and language. 

\bibliographystyle{amsalpha-initials}
\bibliography{reference}

\end{document}